\documentclass[reqno,oneside,a4paper,11pt]{amsart}

\usepackage[a4paper, hmargin={2.8cm, 2.8cm}, vmargin={2.5cm, 2.5cm}]{geometry}
\usepackage[ngerman, english]{babel}
\usepackage[utf8]{inputenc}
\usepackage{pdfsync}
\usepackage{verbatim}
\usepackage[onehalfspacing]{setspace}
\usepackage{amsmath}
\usepackage{amsthm}
\usepackage{amssymb}
\usepackage{amsfonts}

 \usepackage{paralist}
\theoremstyle{theorem}
\newtheorem{satz}{Theorem}[section]

  \newtheorem{lemma}[satz]{Lemma}
  \newtheorem{kor}[satz]{Corollary}

  \newtheorem{prop}[satz]{Proposition}
  
  \theoremstyle{definition}
 \newtheorem{defi}[satz]{Definition}

    {\begin{proof}[Beweis]}
    {\end{proof}}
  \newtheorem{ex}[satz]{Example}

\newcommand{\norm}[1]{\lVert#1\rVert}   
\newcommand{\betrag}[1]{\lvert#1\rvert}
\newcommand{\KK}{\mathrm{KK}}

\newcommand{\Tor}{\mathrm{Tor}}
\newcommand{\K}{\mathrm K}

\newcommand{\RR}{\mathbb R}

\newcommand{\CC}{\mathbb C}
\newcommand{\NN}{\mathbb N}
\newcommand{\ZZ}{\mathbb Z}
\newcommand{\FF}{\mathbb F}
\newcommand{\TT}{\mathbb T}

\newcommand{\lk}{\langle}
\newcommand{\rk}{\rangle}
\newcommand{\id}{\text{id}}

\setcounter{MaxMatrixCols}{19}
\usepackage{tikz}
\usetikzlibrary{matrix,arrows}
\usepackage[pagebackref]{hyperref}
\allowdisplaybreaks



\usepackage{amsfonts}
\usepackage{amsmath}
\usepackage{amssymb}
\usepackage[utf8]{inputenc}
\usepackage{amsthm}
\usepackage{graphicx}
\usepackage{tikz}
\usepackage{tikz-cd}
\usepackage{hyperref}
\usepackage{amssymb}

\hypersetup{                    
    colorlinks=true,                
    breaklinks=true,                
    urlcolor= blue,                 
    linkcolor= blue,                
    citecolor= blue               
    }


\theoremstyle{definition}
\newtheorem{definition}{Definition}[section]
\newtheorem{thm}[definition]{Theorem}




\newcommand{\N}{\mathbb N}
\newcommand{\Z}{\mathbb Z}

\newcommand{\C}{\mathbb C}



\setlength\parindent{0pt}

\usepackage{geometry}
\geometry{hmargin=2.5cm,vmargin=2.5cm}

\usepackage{fancyhdr}

\fancyhead[LE]{\nouppercase{\leftmark}}
\fancyhead[RO]{\nouppercase{\rightmark}}
\fancyhead[RE,LO]{}


\title[Going-Down functors and the K\"unneth-formula]{\texorpdfstring{Going-Down functors and the Künneth formula for crossed products by étale groupoids}{Going-Down functors and the Künneth formula for crossed products by étale groupoids}}
\author{Christian B\"onicke$^{1}$}
\address{Mathematisches Institut der WWU M\"unster,
	\newline Einsteinstrasse 62, 48149 M\"unster, Germany}
\email{c.boenicke@wwu.de}

\author{Clément Dell'Aiera}
\address{Department of Mathematics, University of Hawaii
	\newline 2565 McCarthy Mall, Keller 401A Honolulu HI 96822.
}
\email{dellaiera@math.hawaii.edu}

\thanks{{$^{1}$} Supported by Deutsche Forschungsgemeinschaft (SFB 878).}
\subjclass[2010]{46L80, 22A22, 19K35}
\keywords{Künneth Formula, Groupoid crossed products, Baum-Connes conjecture}

\begin{document}
\maketitle
\begin{abstract}
		We study the connection between the Baum-Connes conjecture for an ample groupoid $G$ with coefficient $A$ and the Künneth formula for the $\K$-theory of tensor products by the crossed product $A\rtimes_r G$. To do so we develop the machinery of Going-Down functors for ample groupoids.
		As an application we prove that both the uniform Roe algebra of a coarse space which uniformly embeds into a Hilbert space and the maximal Roe algebra of a space admitting a fibred coarse embedding into a Hilbert space satisfy the Künneth formula. We also provide a stability result for the K\"unneth formula using controlled $\K$-theory, and apply it to give an example of a space that does not admit a coarse embedding into a Hilbert space, but whose uniform Roe algebra satisfies the K\"unneth formula.
		As a by-product of our methods, we also prove a permanence property for the Baum-Connes conjecture with respect to equivariant inductive limits of the coefficient algebra.
\end{abstract}

\section{Introduction}
We say that a $\mathrm{C}^*$-algebra $A$ satisfies the \textit{Künneth formula} if for all $\mathrm{C}^*$-algebras $B$ there exists a canonical short exact sequence
	\begin{equation} \label{KunnethSequence}	
0\longrightarrow \K_*(A)\otimes \K_*(B)\stackrel{\alpha}{\longrightarrow}\K_*(A\otimes B)\stackrel{\beta}{\longrightarrow}\mathrm{Tor}(\K_*(A),\K_*(B))\longrightarrow 0,
	\end{equation}
	where $A\otimes B$ denotes the minimal tensor product of $A$ and $B$ and $\K_*$ denotes $\ZZ /2\ZZ$-graded $\K$-theory.

The Künneth formula is known to hold for every $\mathrm{C}^*$-algebra in the bootstrap class $\mathcal B$ by the results of Rosenberg and Schochet \cite{MR650021,RosenbergKunneth}. Recall, that $\mathcal B$ is the smallest class of separable nuclear $\mathrm{C}^*$-algebras such that:
\begin{itemize}
\item[$\bullet$] $\C \in \mathcal B$,
\item[$\bullet$] $\mathcal B$ is closed under countable inductive limits,  
\item[$\bullet$] $\mathcal B$ is closed under $\KK$-equivalence,
\item[$\bullet$] if $0 \rightarrow I \rightarrow A \rightarrow A/I \rightarrow 0$ is a short exact sequence of $C^*$-algebras and two of these are in $\mathcal B$, so is the third. 
\end{itemize}
In the groupoid setting, J-L. Tu proved in \cite[Lemma~10.6]{Tu98} that the reduced $\mathrm{C}^*$-algebra $C^*_r(G)$ satisfies the Künneth formula provided that $G$ is an a-T-menable groupoid. There are more $\mathrm{C}^*$-algebras which are known not to be in $\mathcal B$ but still satisfy the Künneth formula, such as reduced $\mathrm{C}^*$-algebras of lattices in $Sp(n,1)$. Indeed, if $A\in \mathcal B$, then $A$ is $\KK$-equivalent to a commutative $\mathrm{C}^*$-algebra (see \cite[Corollary~20.10.3]{MR1656031}). Moreover, a result of Skandalis \cite{SkandalisNotion} shows that if $\Gamma$ is an infinite hyperbolic property T group, then $C^*_r(\Gamma)$ is not $\K$-nuclear. In particular, it cannot be KK-equivalent to a commutative $C^*$-algebra (a more recent reference for this result is \cite[Theorem~6.2.1]{HigsonGuentnerNotes}) so that $C^*_r(\Gamma)$ is not in $\mathcal B$. But \cite[Corollary~0.2]{CEO} together with V. Lafforgue's result \cite{lafforgue2012conjecture} that hyperbolic groups satisfy the Baum-Connes conjecture with coefficients imply that $C^*_r(\Gamma)$ satisfies the Künneth formula.

A systematic study of the Künneth formula for crossed products by locally compact groups was undertaken in \cite{CEO}, where the following result was proved.
\begin{thm}\cite[Theorem~0.1~and~Corollary~0.2]{CEO} Let $G$ be a locally compact group and $A$ a $G$-algebra such that:
\begin{itemize}
\item[$\bullet$] $G$ satisfies the Baum-Connes conjecture with coefficients in all $C^*$-algebras $A\otimes B$ for all $C^*$-algebras $B$ with trivial $G$-action,
\item[$\bullet$] for every $C^*$-algebra $B$, considered as a $G$-algebra with trivial action, and every compact subgroup $K$ of $G$, $A\rtimes_r K$ satisfies the Künneth formula.
\end{itemize}
Then $A\rtimes_r G$ satisfies the Künneth formula.
\end{thm}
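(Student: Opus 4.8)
The plan is to move the Künneth sequence (\ref{KunnethSequence}) for $A\rtimes_r G$ across the Baum--Connes assembly map and reduce it, by a Going-Down argument, to the Künneth formula for the compact subgroup crossed products $A\rtimes_r K=A\rtimes K$, where it holds by the second hypothesis. The basic identity is the natural isomorphism $(A\rtimes_r G)\otimes B\cong (A\otimes B)\rtimes_r G$, valid when $B$ carries the trivial $G$-action; combined with the first hypothesis (Baum--Connes with coefficients in $A\otimes B$) it gives a natural isomorphism $\K_*\big((A\rtimes_r G)\otimes B\big)\cong \K^{\mathrm{top}}_*(G;A\otimes B)$. Under this identification the external product $\alpha$ and the map $\beta$ of (\ref{KunnethSequence}) become natural transformations, in the variable $B$, comparing the homology theory $B\mapsto \K^{\mathrm{top}}_*(G;A)\otimes\K_*(B)$ (together with the $\Tor$ term) with $B\mapsto \K^{\mathrm{top}}_*(G;A\otimes B)$. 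Everything is therefore phrased as exactness of a three-term sequence of homology theories in $B$.

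Write the topological side as a filtered colimit $\K^{\mathrm{top}}_*(G;A\otimes B)=\varinjlim_Y \KK^G_*(C_0(Y),A\otimes B)$ over the $G$-compact subsets $Y$ of the classifying space $\underline{E}G$ for proper actions. The core is the Going-Down reduction: every $G$-compact $Y$ is assembled from finitely many orbit cells $G/K$ with $K$ compact, so a Mayer--Vietoris induction in the space variable reduces the comparison for $Y$ to the comparison for a single orbit. For $Y=G/K$ one has, using the induction isomorphism, Green--Julg (compactness of $K$), and the tensor--crossed product identity, the identifications $\KK^G_*(C_0(G/K),A)\cong \K_*(A\rtimes K)$ and $\KK^G_*(C_0(G/K),A\otimes B)\cong \K_*\big((A\rtimes K)\otimes B\big)$. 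The comparison sequence on this building block is then exactly the Künneth sequence for $A\rtimes K$, which is exact by hypothesis.

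It remains to propagate exactness through the two assembly operations. The decisive homological input is that every abelian group has projective dimension at most $1$ over $\ZZ$, so that the functors $-\otimes\K_*(B)$ and $\Tor(-,\K_*(B))$ send a long exact sequence to a complex that breaks into short exact pieces; this is what makes the three-term Künneth sequence stable under the two-out-of-three principle for the Mayer--Vietoris sequences, via a five-lemma applied to $\alpha$ and $\beta$ and their compatibility with connecting maps. Stability under the filtered colimit over $Y$ follows because filtered colimits are exact and commute with $\otimes$ and with $\Tor$, and because $\K$-theory is continuous. Combining with the first paragraph yields exactness of the Künneth sequence for $\K_*\big((A\rtimes_r G)\otimes B\big)$ for every $B$, which is the assertion.

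The main obstacle is the Going-Down step together with the homological bookkeeping attached to it: one must carry out the cellular Mayer--Vietoris induction over $\underline{E}G$, establish the orbit identifications compatibly with the external product, and verify that the connecting homomorphisms of Mayer--Vietoris are intertwined by $\alpha$ and $\beta$ so that the five-lemma genuinely applies. Controlling the derived $\Tor$ term along these extensions---where the minimal tensor product is not exact---is the technical crux, and it is precisely the projective-dimension-one fact that keeps it manageable. A subsidiary point is to check that all identifications in the first two steps, in particular the colimit presentation of the topological side, are natural and uniform in $B$.
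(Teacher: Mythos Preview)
The paper does not itself prove this theorem---it is quoted from \cite{CEO}---but the paper's main results (Theorem~\ref{Theorem:Kunneth}, Proposition~\ref{Prop:BCandKunneth}, Corollary~\ref{Corollary:Kunneth}) establish the groupoid analogue by exactly the method of \cite{CEO}, so one can compare your sketch against that.

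Your overall architecture is correct and matches the paper: use the natural isomorphism $(A\rtimes_r G)\otimes B\cong (A\otimes B)\rtimes_r G$, transport the problem across the assembly map (first hypothesis), and run a Going-Down argument over $\underline{E}G$ to reduce to compact subgroups via induction and Green--Julg (second hypothesis). The difference lies in how the $\Tor$ term is handled.

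You attempt to carry the full three-term Künneth sequence, including $\beta$ and $\Tor$, through the Mayer--Vietoris induction for \emph{arbitrary} $B$, invoking projective dimension $\leq 1$ to control the failure of exactness of $-\otimes \K_*(B)$. This can in principle be made to work, but it is delicate, and your description (``breaks into short exact pieces \ldots\ five-lemma applied to $\alpha$ and $\beta$'') does not quite constitute an argument: the rows one would need for a five-lemma are not exact after tensoring with $\K_*(B)$, and one must instead splice together the six-term $\otimes/\Tor$ sequences arising at each step of the long exact sequence and then argue by a diagram chase rather than a direct five-lemma.

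The paper (following \cite{CEO}) sidesteps this entirely. The key simplification you are missing is \cite[Proposition~4.2]{CEO}, restated in the paper just before the definition of $\mathcal{N}_G$: a separable $\mathrm{C}^*$-algebra satisfies the Künneth formula if and only if $\alpha$ is an isomorphism for all $B$ with $\K_*(B)$ \emph{free abelian}. One therefore fixes such a $B$ from the outset; then $\Tor$ vanishes, $-\otimes\K_*(B)$ is exact, and the Going-Down argument becomes a clean five-lemma comparison of the two Going-Down functors $C_0(Y)\mapsto \KK^H_*(C_0(Y),A)\otimes\K_*(B)$ and $C_0(Y)\mapsto \KK^H_*(C_0(Y),A\otimes B)$ via the single natural transformation $\alpha_Y$---this is exactly the proof of Theorem~\ref{Theorem:Kunneth}. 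The map $\beta_G$ and the full short exact sequence for general $B$ are then recovered afterwards, abstractly, from the Künneth-functor formalism (Proposition~\ref{Prop:Equivariant Kunneth sequence}, invoking \cite[Theorem~3.3]{CEO}). This two-step packaging (first $\alpha_G$ for free $\K_*(B)$, then the abstract Künneth machine) is both cleaner and what makes the five-lemma honest.
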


More recently, applying the newly developed methods of quantitative $\K$-theory, Oyono-Oyono and Yu were able to show that the uniform Roe algebra $C_u^*(X)$ satisfies the Künneth formula, provided that $X$ has finite asymptotic dimension \cite{OY4}.

In this paper we study the question of when $A$ satisfies the Künneth formula for the case that $A=C\rtimes_r G$ is a (reduced) crossed product, where $G$ is an ample groupoid and $C$ is a $G$-algebra.
	We follow the strategy of \cite{CEO} and compare existence of the sequence \ref{KunnethSequence} to the existence of a canonical exact sequence

		\begin{equation}\label{MixedSequence}
		0\rightarrow \K_*^{\mathrm{top}}(G;C)\otimes\K_*(B)\stackrel{\alpha_G}{\rightarrow}\K_*^{\mathrm{top}}(G;C\otimes B)\stackrel{\beta_G}{\rightarrow}\mathrm{Tor}(\K_*^{\mathrm{top}}(G;C),\K_*(B))\rightarrow 0\\
		\end{equation}

	Here $\K_*^{\mathrm{top}}(G;C)$ denotes the topological $\K$-theory of $G$ with coefficient $C$. The link between the sequences \ref{MixedSequence} and \ref{KunnethSequence} is given by the Baum-Connes assembly map $\mu_C:\K_*^{\mathrm{top}}(G;C)\rightarrow K_*(C\rtimes_r G)$.
	Let $\mathcal{N}_G$ denote the class of all separable exact $G$-algebras $C$ for which the canonical exact sequence \ref{MixedSequence} exists. We show that whenever $C$ is in $\mathcal{N}_G$ and $G$ satisfies the Baum-Connes conjecture with coefficients in $C\otimes B$ for all separable $\mathrm{C}^*$-algebras $B$ with respect to the trivial action on $B$, then $A=C\rtimes_r G$ satisfies the Künneth formula.
	We then use the machinery of Going-Down functors to show that the class $\mathcal{N}_G$ is non-empty, and in fact fairly large (see Theorem \ref{Theorem:Kunneth} and Corollary \ref{Cor:Kunneth}).
		As an immediate consequence of this and Proposition \ref{Prop:BCandKunneth} we can conclude that $A\rtimes_r G$ satisfies the Künneth formula for large classes of dynamical systems $(A,G,\alpha)$ (see Corollary \ref{Corollary:Kunneth}).
	We also show that the class $\mathcal{N}_G$ enjoys many stability properties. Among these we verify that $\mathcal{N}_G$ is stable under taking inductive limits. To prove this we show that the topological $\K$-theory is continuous with respect to the coefficient algebra (see Theorem \ref{Theorem:Continuity of top. K-theory}), which constitutes another application of the Going-Down principle and is inspired by \cite[§7]{MR1836047}:
	Another interesting consequence of this is a permanence property for the Baum-Connes conjecture, with respect to inductive limits of the coefficient algebra (see Corollary \ref{Cor:InductiveLimit}).
	
	We conclude section \ref{Section:MixedKunneth} by enlarging the class of groupoids our results can cover. While the Going-Down techniques require that we restrict ourselves to ample groupoids, we can extend the main results to cover many examples beyond that class. This is done by relating the classes $\mathcal{N}_G$ and $\mathcal{N}_H$, when $G$ and $H$ are equivalent groupoids on the one hand, and relating $\mathcal{N}_{G\ltimes X}$ and $\mathcal{N}_G$.
	In section \ref{Section:Applications} we take a look at several examples and applications, in particular we study the Künneth-formula for uniform (Theorem \ref{Theorem:Uniform Roe-algebra}) and maximal Roe algebras (Theorem \ref{Theorem: Maximal Roe-algebra}).
	
	Finally, in section \ref{Section:Stability} we use controlled $\K$-theory methods to inductively extend further the class of $\mathrm{C}^*$-algebras we can verify the Künneth formula for. In particular we give first examples of uniform Roe-algebras satisfying the Künneth formula for metric spaces not embedding coarsely into a Hilbert space.
	\color{black}

\section{Preliminaries on groupoids and $G$-algebras}
	Recall, that a \textit{groupoid} is a set $G$ together with a distinguished subset $G^{(2)}\subseteq G\times G$, called the set of \textit{composable pairs}, a product map $G^{(2)}\rightarrow G$ denoted by $(g,h)\mapsto gh$, and an inverse map $G\rightarrow G$, written $g\mapsto g^{-1}$, such that:
	\begin{enumerate}
		\item If $(g_1,g_2),(g_2,g_3)\in G^{(2)}$, then so are $(g_1g_2,g_3)$ and $(g_1,g_2g_3)$ and their products coincide, meaning $(g_1g_2)g_3=g_1(g_2g_3)$;
		\item for all $g\in G$ we have $(g,g^{-1})\in G^{(2)}$; and
		\item for any $(g,h)\in G^{(2)}$ we have $g^{-1}(gh)=h$ and $(gh)h^{-1}=g$.
	\end{enumerate}
	Every groupoid comes with a subset
	$$G^{(0)}=\lbrace gg^{-1}\mid g\in G\rbrace=\lbrace g^{-1}g\mid g\in G\rbrace$$
	called the set of \textit{units} of $G$, and two maps $r,d:G\rightarrow G^{(0)}$ given by
	$r(g)=gg^{-1}$ and $d(g)=g^{-1}g$ called \textit{range} and \textit{domain} maps respectively.
	A subgroupoid of $G$ is a subset $H\subseteq G$ which is closed under the product and inversion meaning that $gh\in H$ for all $(g,h)\in G^{(2)}\cap H\times H$ and $g^{-1}\in H$ for all $g\in H$.
	
	When $G$ is endowed with a locally compact Hausdorff topology under which the product and inversion maps are continuous, $G$ is called a locally compact groupoid. A \textit{bisection} is a subset $S\subseteq G$ such that the restrictions of the range and domain maps to $S$ are local homeomorphisms onto open subsets of $G$. We will denote the set of all open bisections by $G^{op}$. A locally compact, Hausdorff groupoid is called \textit{étale} if there is a basis for the topology of $G$ consisting of open bisections. It follows that $G^{(0)}$ is open in $G$. Recall that it is also closed, since $G$ is assumed to be Hausdorff. A topological groupoid is called \emph{ample} if it has a basis of compact open bisections. We will write $G^a$ for the subset of $G^{op}$ consisting of all compact open bisections. If $G$ is a locally compact, Hausdorff and étale groupoid, then $G$ is ample if and only if $G^{(0)}$ is totally disconnected (see \cite[Proposition 4.1]{Exel10}).
	
	For a subset $D\subseteq G^{(0)}$ write
	$$G_D:=\lbrace g\in G\mid d(g)\in D\rbrace,\ G^D:=\lbrace g\in G\mid r(g)\in D\rbrace,\ \text{and } G_D^D:=G_D\cap G^D.$$
	If $D=\lbrace u\rbrace$ consists of a single point $u\in G^{(0)}$ we will omit the braces in our notation and write $G_u:=G_D$, $G^u:=G^D$ and $G_u^u:=G_D^D$.
	
	Recall that if $X$ is a locally compact Hausdorff space and $A$ is a $\mathrm{C}^*$-algebra, then we call $A$ a $C_0(X)-algebra$ if there exists a non-degenerate $\ast$-homomorphism
	$$\Phi:C_0(X)\rightarrow Z(M(A)),$$ where $Z(M(A))$ denotes the center of the multiplier algebra of $A$. For every $x\in X$ there is a closed ideal $I_x$ in $A$ defined by $I_x=\overline{C_0(X\setminus\lbrace x\rbrace)A}$ and we call the quotient $A_x:=A/I_x$ the \textit{fibre} of $A$ over $x$. We write $a(x)$ for the image of $a\in A$ in $A_x$ under the quotient map. Put $\mathcal{A}=\coprod_{x\in X} A_x$. Then $\mathcal{A}$ can be equipped with a topology such that it becomes an upper-semicontinouos $\mathrm{C}^*$-bundle over $X$ and moreover $A\cong \Gamma_0(X,\mathcal{A})$, where $\Gamma_0(X,\mathcal{A})$ denotes the continuous sections of this bundle which vanish at infinity.
	Throughout this work we will freely alternate between the bundle picture and the picture as $C_0(X)$-algebras. For convenience bundles will always be denoted by calligraphic letters.
	The reader unfamiliar with the theory is referred to the expositions in \cite[Appendix C]{Williams} and \cite[Section~3.1]{Goehle}.
	
	Recall that a $\ast$-homomorphism $\Phi:A\rightarrow B$ between two $C_0(X)$-algebras $A$ and $B$ is called \textit{$C_0(X)$-linear} if $\Phi(f a)=f \Phi(a)$ for all $f\in C_0(X)$ and all $a\in A$.
	
	If $\Phi:A\rightarrow B$ is a $C_0(X)$-linear homomorphism, it induces $\ast$-homo\-morphisms $\Phi_x:A_x\rightarrow B_x$ on the level of the fibres given by $\Phi_x(a(x))=\Phi(a)(x)$.
	Conveniently, one can check several properties of $\Phi$ on the level of the fibres and vice versa:
	\begin{lemma}\cite[Lemma~2.1]{MR2820377}\label{Lem:IsomorphismCriteriumForC(X)-linearHomomorphisms}
		Let $\Phi:A\rightarrow B$ be a $C_0(X)$-linear homomorphism. Then $\Phi$ is injective (resp. surjective, resp. bijective) if and only if $\Phi_x$ is injective (resp. surjective, resp. bijective) for all $x\in X$.
	\end{lemma}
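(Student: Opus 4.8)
The bijectivity assertion is immediate once the injectivity and surjectivity assertions are proved, so the plan is to treat those two separately. Throughout I would rely on two standard facts about a $C_0(X)$-algebra $A$ with fibres $A_x=A/I_x$: first, the norm is recovered fibrewise, $\norm{a}=\sup_{x\in X}\norm{a(x)}$, where $x\mapsto\norm{a(x)}$ is upper semicontinuous and vanishes at infinity; and second, by construction $\Phi(a)(x)=\Phi_x(a(x))$ for all $a\in A$ and $x\in X$, together with $C_0(X)$-linearity in the form $\Phi(fa)=f\Phi(a)$.

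Two of the four implications come essentially for free. If every $\Phi_x$ is injective and $\Phi(a)=0$, then $\Phi_x(a(x))=\Phi(a)(x)=0$ forces $a(x)=0$ for every $x$, whence $\norm{a}=\sup_x\norm{a(x)}=0$ and $\Phi$ is injective. Dually, if $\Phi$ is surjective and $\beta\in B_x$, I lift $\beta$ to some $b\in B$ and write $b=\Phi(a)$; then $\Phi_x(a(x))=\Phi(a)(x)=b(x)=\beta$, so $\Phi_x$ is surjective.

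For the converse direction of injectivity I would exploit the fact that an injective $\ast$-homomorphism is isometric. Fix $x_0$; a general element of $\ker\Phi_{x_0}$ has the form $a(x_0)$ with $\Phi(a)(x_0)=0$. Given $\eps>0$, upper semicontinuity of $x\mapsto\norm{\Phi(a)(x)}$ supplies an open neighbourhood $U$ of $x_0$ on which $\norm{\Phi(a)(x)}<\eps$; choosing $f\in C_0(X)$ with $0\le f\le1$, $f(x_0)=1$ and $\mathrm{supp}(f)\subseteq U$, linearity gives $\norm{\Phi(fa)}=\sup_x\betrag{f(x)}\,\norm{\Phi(a)(x)}\le\eps$. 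As $\Phi$ is isometric this yields $\norm{fa}\le\eps$, and since $\norm{a(x_0)}=\norm{(fa)(x_0)}\le\norm{fa}\le\eps$ with $\eps$ arbitrary, $a(x_0)=0$, so $\Phi_{x_0}$ is injective.

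The remaining implication, that fibrewise surjectivity of all $\Phi_x$ forces $\Phi$ surjective, is the main obstacle, and I would settle it by a partition of unity argument. Since $\Phi$ is a $\ast$-homomorphism its image $\Phi(A)$ is a closed $C_0(X)$-subalgebra of $B$, so it suffices to approximate an arbitrary $b\in B$ to within $2\eps$. Because $b$ vanishes at infinity the set $K:=\{x:\norm{b(x)}\ge\eps\}$ is compact. For each $x\in K$ fibrewise surjectivity yields $a_x\in A$ with $\Phi(a_x)(x)=b(x)$, and upper semicontinuity gives an open $U_x\ni x$ on which $\norm{\Phi(a_x)(y)-b(y)}<\eps$. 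Extracting a finite subcover $U_{x_1},\dots,U_{x_n}$ of $K$ and a subordinate partition of unity $f_1,\dots,f_n\in C_c(X)$ with $\mathrm{supp}(f_i)\subseteq U_{x_i}$, $0\le\sum_if_i\le1$ and $\sum_if_i\equiv1$ on $K$, I set $a=\sum_if_ia_{x_i}\in A$. Writing $\Phi(a)(y)-b(y)=\sum_if_i(y)\bigl(\Phi(a_{x_i})(y)-b(y)\bigr)+\bigl(\sum_if_i(y)-1\bigr)b(y)$ and bounding each fibre, the first sum is at most $\eps$ because $f_i(y)\ne0$ implies $y\in U_{x_i}$, while the error term vanishes on $K$ and is at most $\eps$ off $K$, where $\norm{b(y)}<\eps$. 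Hence $\norm{\Phi(a)-b}\le2\eps$, and letting $\eps\to0$ gives $b\in\Phi(A)$. The delicate points are exactly the compactness of $K$ extracted from the vanishing-at-infinity condition and the arrangement of the partition of unity so that the off-$K$ contribution stays controlled, since this is where the local-to-global passage actually takes place.
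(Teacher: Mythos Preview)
The paper does not supply its own proof of this lemma; it is quoted verbatim from \cite[Lemma~2.1]{MR2820377} and used as a black box. Your argument is correct and is the standard one: the injectivity direction via the fibrewise norm formula, the converse via isometry of injective $\ast$-homomorphisms together with a bump function, and surjectivity by a partition-of-unity patching over the compact set $K=\{x:\norm{b(x)}\ge\eps\}$, using that the image of a $\ast$-homomorphism is closed. There is nothing to compare against in the present paper.
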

	
	We will also need the notion of a pullback: If $A$ is a $C_0(X)$-algebra and $f:Y\rightarrow X$ a continuous map, we can define the \textit{pullback} of $A$ along $f$ as follows:
	Let $q:\mathcal{A}\rightarrow X$ denote the upper-semicontinouos $\mathrm{C}^*$-bundle over $X$ associated to $A$. Then we can form the pullback bundle $f^*\mathcal{A}=\lbrace ((y,a)\in Y\times\mathcal{A}\mid f(y)=q(a)\rbrace$. The bundle $f^*\mathcal{A}$ is an upper-semicontinouos $\mathrm{C}^*$-bundle over $Y$ whose fibres $(f^*\mathcal{A})_y$ are canonically isomorphic to $A_{f(y)}$. We let $f^*A:=\Gamma_0(Y,f^*\mathcal{A})$ denote the corresponding $C_0(Y)$-algebra. Note, that we can canonically identify $(f^*A)_y=A_{f(y)}$.
	It is an easy exercise to show that if $A$ is a $C_0(X)$-algebra and $f:Y\rightarrow X$ and $g:Z\rightarrow Y$ are two continuous maps, then the algebras $(f\circ g)^*A$ and $g^*(f^*A)$ are canonically isomorphic as $C_0(Z)$-algebras.
	
	Pullbacks also behave nicely with respect to $C_0(X)$-linear $\ast$-homomorphisms:	
	\begin{lemma}\label{Lem:PullbackOfHomomorphisms}
		Let $A$ and $B$ be two $C_0(X)$-algebras and $f:Y\rightarrow X$ a continuous map. If $\Phi:A\rightarrow B$ is a $C_0(X)$-linear homomorphism, then the map
		$$f^*\Phi:f^*A\rightarrow f^*B$$
		given by $(f^*\Phi)(\psi)(y)=\Phi_{f(y)}(\psi(y))$ is a $C_0(Y)$-linear homomorphism.
		Moreover, the pullback construction is functorial meaning if $\Psi:B\rightarrow C$ is another $C_0(X)$-linear $*$-homo\-morphism into a $C_0(X)$-algebra $C$ then $f^*\Psi\circ f^*\Phi=f^*(\Psi\circ \Phi)$.
	\end{lemma}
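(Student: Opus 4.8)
The plan is to verify the algebraic structure fiberwise first, and then to address the only genuinely topological point, namely that $f^*\Phi(\psi)$ is again a continuous section vanishing at infinity. For a fixed $y\in Y$ the map $\Phi_{f(y)}\colon A_{f(y)}\to B_{f(y)}$ is a $\ast$-homomorphism, and under the identifications $(f^*A)_y=A_{f(y)}$ and $(f^*B)_y=B_{f(y)}$ this says precisely that the formula $(f^*\Phi)(\psi)(y)=\Phi_{f(y)}(\psi(y))$ respects all algebraic operations pointwise, so $(f^*\Phi)(\psi)(y)$ lands in the correct fiber $(f^*B)_y$. In particular, for $g\in C_0(Y)$ and $\psi\in f^*A$ we get $(f^*\Phi)(g\psi)(y)=\Phi_{f(y)}(g(y)\psi(y))=g(y)\,(f^*\Phi)(\psi)(y)$, so $f^*\Phi$ is automatically $C_0(Y)$-linear, and it is likewise multiplicative and $\ast$-preserving once we know it lands in $f^*B$. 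Moreover, since each $\Phi_{f(y)}$ is contractive, the pointwise formula satisfies $\sup_y\norm{(f^*\Phi)(\psi)(y)}\le\sup_y\norm{\psi(y)}$.

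It remains to show that $y\mapsto\Phi_{f(y)}(\psi(y))$ is a continuous section of $f^*\mathcal B$ vanishing at infinity for every $\psi\in f^*A$. First I would check this on the distinguished sections $f^*a$ for $a\in A$, defined by $(f^*a)(y)=a(f(y))$, which generate a dense $C_0(Y)$-submodule of $f^*A$ (standard for pullbacks of upper-semicontinuous $\mathrm{C}^*$-bundles, since $\{a(f(y)):a\in A\}$ already exhausts the fiber $A_{f(y)}$). Using the defining property $\Phi_x(a(x))=\Phi(a)(x)$ of the fiber maps one computes
$$(f^*\Phi)(f^*a)(y)=\Phi_{f(y)}(a(f(y)))=\Phi(a)(f(y))=\bigl(f^*(\Phi(a))\bigr)(y),$$
so that $f^*\Phi$ sends the generating section $f^*a$ to the generating section $f^*(\Phi(a))$ of $f^*\mathcal B$, which is continuous by construction of the pullback bundle. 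By $C_0(Y)$-linearity the same holds on all finite sums $\sum_i g_i\,f^*a_i$.

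For a general $\psi\in f^*A$ I would approximate it in the sup-norm by such finite sums $\psi_n$. The contractivity estimate shows $(f^*\Phi)(\psi_n)$ is Cauchy in $\Gamma_0(Y,f^*\mathcal B)$, hence converges to a continuous section vanishing at infinity; on the other hand, continuity of each $\Phi_{f(y)}$ gives $(f^*\Phi)(\psi_n)(y)\to\Phi_{f(y)}(\psi(y))$ pointwise. The two limits must coincide, so the section $(f^*\Phi)(\psi)$ given by $y\mapsto\Phi_{f(y)}(\psi(y))$ indeed lies in $f^*B$, and the pointwise identities from the first paragraph upgrade $f^*\Phi$ to a genuine $C_0(Y)$-linear $\ast$-homomorphism. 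I expect this approximation step to be the only real obstacle; the density of the pullback sections in $f^*A$ is the fact I would lean on most heavily.

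Finally, for functoriality I would observe that it is a purely fiberwise identity: from $\Phi_x(a(x))=\Phi(a)(x)$ one deduces $(\Psi\circ\Phi)_x=\Psi_x\circ\Phi_x$ on each fiber, whence
$$(f^*\Psi\circ f^*\Phi)(\psi)(y)=\Psi_{f(y)}\bigl(\Phi_{f(y)}(\psi(y))\bigr)=(\Psi\circ\Phi)_{f(y)}(\psi(y))=\bigl(f^*(\Psi\circ\Phi)\bigr)(\psi)(y)$$
for all $y\in Y$, giving the desired equality of homomorphisms.
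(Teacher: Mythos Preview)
The paper states this lemma without proof, treating it as a routine verification. Your argument is correct and supplies exactly the details the paper omits: the only nontrivial point is continuity of the resulting section, which you handle via the dense $C_0(Y)$-submodule generated by pulled-back sections together with the identity $(f^*\Phi)(f^*a)=f^*(\Phi(a))$.
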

	
	Recall that a \textit{groupoid dynamical system} $(A,G,\alpha)$ consists of a locally compact Hausdorff groupoid $G$, a $C_0(G^{(0)})$-algebra $A$ and a family $(\alpha_g)_{g\in G}$ of $*$-isomorphisms $\alpha_g:A_{d(g)}\rightarrow A_{r(g)}$ such that $\alpha_{gh}=\alpha_g\circ \alpha_h$ for all $(g,h)\in G^{(2)}$ and such that $g\cdot a:=\alpha_g(a)$ defines a continuous action of $G$ on the upper-semicontinuous bundle $\mathcal{A}$ associated to $A$.	
	We will often omit the action $\alpha$ in our notation and just say that $A$ is a $G$-algebra.
	Since the topology on an upper-semicontinuous $\mathrm{C}^*$-bundle is notoriously difficult to handle we will rely on the following alternate characterization in this paper:
	\begin{lemma}\cite[Lemma~4.3]{MR2547343}
		Let $(A,G,\alpha)$ be a groupoid dynamical system. Then the mapping $$f\mapsto [g\mapsto \alpha_g(f(g))]$$ defines a $C_0(G)$-linear $\ast$-isomorphism $d^*A\rightarrow r^*A$, also denoted by $\alpha$.
		
		Conversely, if $G$ is a groupoid, $A$ a $C_0(G^{(0)})$-algebra, and $\alpha:d^*A\rightarrow r^*A$ is a $C_0(G)$-linear isomorphism then $\alpha$ induces an isomorphism $\alpha_g:A_{d(g)}\rightarrow A_{r(g)}$ for each $g\in G$. If the equation $\alpha_{gh}=\alpha_g\alpha_h$ holds for all $(g,h)\in G^{(2)}$, then $(A,G,\alpha)$ is a groupoid dynamical system.
	\end{lemma}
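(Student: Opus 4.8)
The plan is to prove the two directions separately, reducing the algebraic content to fiberwise checks and isolating continuity as the only genuine issue. For the forward direction, take $f \in d^*A = \Gamma_0(G, d^*\mathcal{A})$, so that $g \mapsto f(g)$ is a continuous section with $f(g) \in (d^*\mathcal{A})_g = A_{d(g)}$. First I would verify that $g \mapsto \alpha_g(f(g))$ defines an element of $r^*A$: continuity of this assignment is exactly the composition of the continuous section $g \mapsto (g,f(g))$ with the jointly continuous action map $(g,a) \mapsto \alpha_g(a)$, while vanishing at infinity follows because each $\alpha_g$ is a $\ast$-isomorphism, hence isometric, so that $\| \alpha_g(f(g)) \| = \| f(g) \|$. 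That the resulting map $d^*A \to r^*A$ is a $C_0(G)$-linear $\ast$-homomorphism is then checked pointwise, since addition, multiplication, the involution, and multiplication by $\varphi \in C_0(G)$ are all computed fiber-by-fiber and each $\alpha_g$ is a $\ast$-homomorphism commuting with the scalar $\varphi(g)$. For bijectivity I would appeal to Lemma \ref{Lem:IsomorphismCriteriumForC(X)-linearHomomorphisms}: the fiber of the map over $g$ is precisely $\alpha_g : A_{d(g)} \to A_{r(g)}$, an isomorphism by hypothesis, so the criterion delivers bijectivity globally. (Its inverse is induced fiberwise by $\alpha_{g^{-1}}$, using $\alpha_{g^{-1}}\alpha_g = \alpha_{d(g)} = \id$, where $\alpha_u = \id$ on a unit $u$ follows from $\alpha_u = \alpha_{uu} = \alpha_u\alpha_u$ and invertibility of $\alpha_u$.)

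For the converse, starting from a $C_0(G)$-linear isomorphism $\alpha : d^*A \to r^*A$, the induced fiber maps are $\alpha_g : (d^*A)_g \to (r^*A)_g$, that is $\alpha_g : A_{d(g)} \to A_{r(g)}$ under the canonical identifications, and Lemma \ref{Lem:IsomorphismCriteriumForC(X)-linearHomomorphisms} guarantees that each is an isomorphism. Granting the cocycle relation $\alpha_{gh} = \alpha_g\alpha_h$, the only remaining ingredient of a groupoid dynamical system is joint continuity of $(g,a) \mapsto \alpha_g(a)$ on the fibered product $\{(g,a) : d(g) = q(a)\}$. This I would extract from the section-level continuity of $\alpha$: since $\alpha$ sends continuous sections to continuous sections and the topology of an upper-semicontinuous $\mathrm{C}^*$-bundle is generated by its continuous sections, $\alpha$ corresponds to a continuous bundle isomorphism $d^*\mathcal{A} \to r^*\mathcal{A}$, and composing with the bundle projection yields the continuous action.

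The main obstacle is precisely this continuity dictionary — passing between joint continuity of the action on the bundle and continuity of the induced map on section algebras — needed in both directions. The algebraic identities are immediate fiberwise and bijectivity is handed to us by Lemma \ref{Lem:IsomorphismCriteriumForC(X)-linearHomomorphisms}; what requires care is the precise description of the pullback-bundle topology and the correspondence between $C_0(G)$-linear homomorphisms of section algebras and continuous bundle morphisms. Here I would lean on the standard theory of upper-semicontinuous $\mathrm{C}^*$-bundles from the references cited for the $C_0(X)$-algebra formalism, rather than reproving this correspondence from scratch.
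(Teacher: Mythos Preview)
The paper does not actually prove this lemma: it is quoted verbatim from \cite[Lemma~4.3]{MR2547343} and used as a black box, so there is no in-paper argument to compare against. Your outline is a correct and standard way to establish the result, and the organisation you chose --- reducing the algebra to fibrewise checks via Lemma~\ref{Lem:IsomorphismCriteriumForC(X)-linearHomomorphisms} and isolating the bundle-continuity dictionary as the only nontrivial point --- is exactly the right emphasis.
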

	
	Finally, let us briefly recall the definition of a groupoid crossed product	following \cite{MR1900993}.
	Let $G$ be an étale groupoid and $(A,G,\alpha)$ a groupoid dynamical system. Consider the complex vector space $\Gamma_c(G,r^*\mathcal{A})$. It carries a canonical $*$-algebra structure with respect to the following operations:
	$$(f_1\ast f_2)(g)=\sum\limits_{h\in G^{r(g)}} f_1(h)\alpha_h(f_2(h^{-1}g))$$
	and
	$$f^*(g)=\alpha_g(f(g^{-1})^*).$$
	See for example \cite[Proposition~4.4]{MR2547343} for a proof of this fact.
	For $u\in G^{(0)}$ consider the Hilbert $A_u$-module $\ell^2(G^u,A_u)$. It is the completion of the space of finitely supported $A_u$-valued functions on $G^u$, with respect to the inner product 
	$$\lk \xi,\eta\rk=\sum\limits_{h\in G^u}\xi(h)^*\eta(h).$$
	We can then define a $*$-representation $\pi_u:\Gamma_c(G,r^*\mathcal{A})\rightarrow \mathcal{L}(\ell^2(G^u,A_u))$ by
	$$\pi_u(f)\xi(g)=\sum\limits_{h\in G^u}\alpha_g(f(g^{-1}h))\xi(h).$$
	Using this family of representations, we can define a $\mathrm{C}^*$-norm on the convolution algebra $\Gamma_c(G,r^*\mathcal{A})$ by
	$$\norm{f}_r:=\sup\limits_{u\in G^{(0)}}\norm{\pi_u(f)}.$$
	The reduced crossed product $A\rtimes_r G$ is defined to be the completion of $\Gamma_c(G,r^*\mathcal{A})$ with respect to $\norm{\cdot}_r$.

\section{Inductive limits of $G$-algebras}
In this section we will show that an inductive limit of $G$-algebras with $G$-equivariant connecting morphisms is again a $G$-algebra in a canonical fashion. These results should be known to the experts but since we could not find a suitable reference and in order to keep the exposition self-contained we elaborate on the details. We start of by considering $C_0(X)$-algebras:
	Let $(A_i,\varphi_{i,j})$ be an inductive system of $\mathrm{C}^*$-al\-gebras, where each $A_i$ is a $C_0(X)$-algebra, such that the connecting homomorphisms $\varphi_i$ are $C_0(X)$-linear.
	If $A=\lim\limits_{\rightarrow}A_i$, then $A$ is a $C_0(X)$-algebra in a canonical way:
	
	Let us start by recalling the construction of the limit algebra $A$:
	Consider the algebra $$\widetilde{A}=\lbrace (a_i)_i\in\prod\limits_{i\in I}A_i\mid \exists i_0: a_{i}=\varphi_{i,i_0}(a_{i_0})\forall i\geq i_0\rbrace.$$
	Then $A$ is the closure of the image of $\widetilde{A}$ under the quotient map $q:\prod A_i\rightarrow \prod A_i/\bigoplus A_i$.
	Now if $f\in C_0(X)$, then $C_0(X)$-linearity of the $\varphi_{i,j}$ implies, that $\widetilde{A}$ is invariant under component-wise multiplication with $f$. It also leaves the ideal $\bigoplus A_i$ invariant. Hence we get a well-defined linear map
	$q(\widetilde{A})\rightarrow q(\widetilde{A})$ by $f\cdot q((a_i)_i):=q((f\cdot a_i)_i)$. Using the equality $\norm{q((a_i)_i)}=\lim \norm{a_i}$ we get
	$\norm{q((f\cdot a_i)_i)}=\lim \norm{f\cdot a_i}\leq \norm{f}\lim \norm{a_i}=\norm{f}\norm{q((a_i)_i)}$. Consequently, $f\cdot$ extends to a bounded linear map $A\rightarrow A$, actually to an element in $Z(M(A))$, where the adjoint is given by $\overline{f}\cdot$. Thus, we have constructed a $*$-homomorphism $\Phi:C_0(X)\rightarrow Z(M(A))$. 
	\begin{lemma}\label{Lem:InductiveLimitsOfC_0(X)-algebras}
		The $*$-homomorphism $\Phi$ from above is non-degener\-ate. Consequently, $A$ is a $C_0(X)$-algebra such that the canonical maps $\psi_{i}:A_i\rightarrow A$ are $C_0(X)$-linear.
	\end{lemma}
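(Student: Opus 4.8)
The plan is to establish the two assertions in turn, beginning with the $C_0(X)$-linearity of the canonical maps $\psi_i\colon A_i\to A$, since this is really a statement about the multiplier action $\Phi$ constructed above and does not yet require non-degeneracy. Recall that for $a\in A_i$ one has $\psi_i(a)=q\big((\varphi_{k,i}(a))_{k\geq i}\big)$ (with zero entries below $i$), and that $A=\overline{\bigcup_i\psi_i(A_i)}$. Fixing $f\in C_0(X)$ and denoting by $\Phi_i$ the structure map of $A_i$, I would simply compute component-wise, using the very definition $\Phi(f)q((a_k)_k)=q((f\cdot a_k)_k)$ together with the $C_0(X)$-linearity of the connecting maps $\varphi_{k,i}$ (which gives $f\cdot\varphi_{k,i}(a)=\varphi_{k,i}(\Phi_i(f)a)$ on each component), to obtain $\Phi(f)\psi_i(a)=q\big((\varphi_{k,i}(\Phi_i(f)a))_{k\geq i}\big)=\psi_i(\Phi_i(f)a)$. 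This is exactly the asserted $C_0(X)$-linearity of $\psi_i$.

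For the non-degeneracy of $\Phi$ I would argue with an approximate unit. Choose an approximate unit $(u_\lambda)_\lambda$ for $C_0(X)$ with $0\leq u_\lambda\leq 1$. Since $\Phi$ is a $*$-homomorphism between $\mathrm{C}^*$-algebras it is contractive, so $\norm{\Phi(u_\lambda)}\leq 1$ for all $\lambda$. Each $A_i$ is by hypothesis a non-degenerate $C_0(X)$-algebra, whence $\Phi_i(u_\lambda)a\to a$ for every $a\in A_i$; combining this with the $C_0(X)$-linearity and contractivity of $\psi_i$ gives $\Phi(u_\lambda)\psi_i(a)=\psi_i(\Phi_i(u_\lambda)a)\to\psi_i(a)$. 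Thus $\Phi(u_\lambda)b\to b$ holds on the dense subspace $\bigcup_i\psi_i(A_i)$ of $A$, and the uniform bound $\norm{\Phi(u_\lambda)}\leq 1$ propagates this convergence to all of $A$ by a routine $3\varepsilon$-estimate. Hence $\overline{\Phi(C_0(X))A}=A$, i.e. $\Phi$ is non-degenerate.

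Once non-degeneracy is in hand the lemma follows: $\Phi\colon C_0(X)\to Z(M(A))$ being a non-degenerate $*$-homomorphism is precisely the statement that $A$ is a $C_0(X)$-algebra, and the $C_0(X)$-linearity of the $\psi_i$ was already verified in the first step. I do not expect a genuine obstacle here; the only points demanding a little care are checking that $\Phi$ really lands in the centre $Z(M(A))$ (handled already in the paragraph preceding the statement) and the interchange of limits in the non-degeneracy argument, which is legitimate only because of the uniform bound on $\norm{\Phi(u_\lambda)}$ together with the density of the images $\psi_i(A_i)$.
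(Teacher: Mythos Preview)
Your proof is correct and follows essentially the same strategy as the paper: reduce non-degeneracy to the dense subspace $\bigcup_i\psi_i(A_i)$ and use non-degeneracy of each $\Phi_i$ there. The only cosmetic difference is that the paper argues directly by factorisation (given $a\in A$, approximate by $\psi_i(b)$ and then write $b\approx fc$ in $A_i$), whereas you run an approximate unit $(u_\lambda)$ through $\Phi$; both are standard and equivalent. Your explicit verification of the $C_0(X)$-linearity of $\psi_i$ is a nice addition, since the paper uses $\psi_i(fc)=f\psi_i(c)$ in its estimate without comment.
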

	\begin{proof}
		Let $a\in A$ and $\varepsilon>0$ be given. By construction of the inductive limit $\bigcup_{i\in I} \psi_i(A_i)$ is dense in $A$, so there exists an $i\in I$ and $b\in A_i$ such that $\norm{\psi_i(b)-a}<\frac{\varepsilon}{2}$. Since the structure homomorphism for $A_i$ is non-degenerate we can also find $f\in C_0(X)$ and $c\in A_i$ such that $\norm{b-fc}< \frac{\varepsilon}{2\norm{\psi_i}}$, and hence $\norm{\psi_i(b)-f\psi_i(c)}<\frac{\varepsilon}{2}$.
		Combining the above inequalites we obtain
		$\norm{f\psi_i(c)-a}<\norm{f\psi_i(c)-\psi_i(b)}+\norm{\psi_i(b)-a}<\varepsilon$.
	\end{proof}
	
	We will now identify the fibres of the limit algebra:
	\begin{lemma}
		Let $(A_i,\varphi_{i,j})$ be an inductive system of $C_0(X)$-al\-gebras and $A=\lim_{i} A_i$. Then, for every $x\in X$, $((A_i)_x,(\varphi_{i,j})_x)$ is an inductive system of $\mathrm{C}^*$-algebras and
		$$ \lim\limits_{i} (A_i)_x\cong A_x.$$
	\end{lemma}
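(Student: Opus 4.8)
The plan is to first verify that $((A_i)_x,(\varphi_{i,j})_x)$ really is an inductive system, then to manufacture a comparison homomorphism $\Theta\colon\lim_i(A_i)_x\to A_x$ out of the universal property, and finally to show $\Theta$ is an isomorphism by exploiting the exactness of the inductive limit functor for $\mathrm{C}^*$-algebras. For the first point, each $\varphi_{i,j}$ is $C_0(X)$-linear, hence descends to a $\ast$-homomorphism $(\varphi_{i,j})_x\colon(A_i)_x\to(A_j)_x$ on the fibres, where $(A_i)_x=A_i/I_{i,x}$ and $I_{i,x}=\overline{C_0(X\setminus\{x\})A_i}$. Since the assignment $\Phi\mapsto\Phi_x$ is induced on quotients and is therefore functorial, the cocycle relations $\varphi_{j,k}\circ\varphi_{i,j}=\varphi_{i,k}$ pass to $(\varphi_{j,k})_x\circ(\varphi_{i,j})_x=(\varphi_{i,k})_x$, so we do obtain an inductive system.

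By Lemma \ref{Lem:InductiveLimitsOfC_0(X)-algebras} the canonical maps $\psi_i\colon A_i\to A$ are $C_0(X)$-linear and satisfy $\psi_j\circ\varphi_{i,j}=\psi_i$. Passing to fibres yields $\ast$-homomorphisms $(\psi_i)_x\colon(A_i)_x\to A_x$ compatible with the system, i.e.\ $(\psi_j)_x\circ(\varphi_{i,j})_x=(\psi_i)_x$, so the universal property of the inductive limit produces $\Theta\colon\lim_i(A_i)_x\to A_x$ with $\Theta\circ\eta_i=(\psi_i)_x$, where $\eta_i$ denotes the structural maps into the limit. To identify $\Theta$ as an isomorphism, I would apply the exactness of the $\mathrm{C}^*$-inductive limit functor to the compatible short exact sequences $0\to I_{i,x}\to A_i\to(A_i)_x\to 0$; here compatibility of the connecting maps with the ideals again follows from $C_0(X)$-linearity, since $\varphi_{i,j}(I_{i,x})\subseteq I_{j,x}$. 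This gives $\lim_i(A_i)_x\cong A/\lim_i I_{i,x}$, where the ideal $\lim_i I_{i,x}$ is realized inside $A$ as $\overline{\bigcup_i\psi_i(I_{i,x})}$.

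The crux is then to identify this ideal with $I_x=\overline{C_0(X\setminus\{x\})A}$, for then $A/\lim_i I_{i,x}=A/I_x=A_x$ and the isomorphism so obtained is exactly $\Theta$. The inclusion $\overline{\bigcup_i\psi_i(I_{i,x})}\subseteq I_x$ is immediate from $C_0(X)$-linearity of the $\psi_i$. For the reverse inclusion, given $f\in C_0(X\setminus\{x\})$ and $a\in A$, I would use the density of $\bigcup_i\psi_i(A_i)$ in $A$ to approximate $a$ by some $\psi_i(b)$; then $fa$ is approximated by $f\psi_i(b)=\psi_i(fb)$ with $fb\in I_{i,x}$, so $fa\in\overline{\bigcup_i\psi_i(I_{i,x})}$, whence $I_x\subseteq\overline{\bigcup_i\psi_i(I_{i,x})}$.

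The main obstacle I anticipate is precisely this identification of the ideals, which is equivalent to proving injectivity of $\Theta$; surjectivity is comparatively soft, since the range of a $\ast$-homomorphism is closed and here it contains the dense set $\bigcup_i(\psi_i)_x((A_i)_x)$. Once injectivity is secured through the exactness argument above, the claimed isomorphism $\lim_i(A_i)_x\cong A_x$ follows.
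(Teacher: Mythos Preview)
Your proof is correct. The approach differs from the paper's in its organization and in the black-box result you invoke. The paper constructs a surjection $\pi\colon A\to\lim_i(A_i)_x$ out of the universal property of $A$ (the opposite direction from your $\Theta$) and then identifies $\ker(\pi)$ with $I_x$ by a direct $\varepsilon/3$ approximation argument; it never appeals to exactness of the inductive-limit functor. You instead apply that exactness to the fibrewise sequences $0\to I_{i,x}\to A_i\to(A_i)_x\to 0$, reducing the problem to the identification $\overline{\bigcup_i\psi_i(I_{i,x})}=I_x$, which you then establish by the same density-and-$C_0(X)$-linearity reasoning the paper uses for $\ker(\pi)=I_x$. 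So the key computation is essentially the same; what you gain is a cleaner structural picture (the isomorphism is visibly the one induced on quotients), at the cost of invoking a standard but nontrivial fact about $\mathrm{C}^*$-inductive limits, whereas the paper's argument is entirely self-contained.
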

	\begin{proof}
		It is immediate, that $((A_i)_x,(\varphi_{i,j})_x)$ is indeed an inductive sequence of $\mathrm{C}^*$-algebras. Hence we only need to identify the limit.
		Let $\pi_{i,x}:A_i\rightarrow (A_i)_x$ denote the quotient maps onto the fibres and $\psi_{i,x}:(A_i)_x\rightarrow \lim\limits_{i}(A_i)_x$ the canonical maps. By the universal property of the limit we obtain a surjective $*$-homo\-morphism $$\pi:A\rightarrow \lim\limits_{i}(A_i)_x.$$
		It remains to show that the kernel of $\pi$ coincides with the ideal $I_x=\overline{C_0(X\setminus\lbrace x\rbrace)A}$ of $A$.
		If $a=\psi_i(b)$ for some $b\in A_i$ and $f\in C_0(X\setminus\lbrace x\rbrace)$, then $\pi(fa)=\pi(f\psi_i(b))=\pi(\psi_i(fb))=\psi_{i,x}(\pi_{i,x}(fb))=0$. By continuity we get $I_x\subseteq ker(\pi)$.
		
		Suppose conversely that $a\in ker(\pi)$ and $\varepsilon>0$ is given. First we can find $i\in I$ and $b\in A_i$ such that $\norm{a-\psi_i(b)}<\frac{\varepsilon}{3}$. Thus,
		$\norm{\psi_{i,x}(\pi_{i,x}(b))}=\norm{\pi(\psi_i(b))}=\norm{\pi(\psi_i(b)-a)}\leq \norm{a-\psi_i(b)}<\frac{\varepsilon}{3}$.
		Upon replacing $b$ and $i$ by $\varphi_{j,i}(b)$ for $j\geq i$ big enough we can actually assume that $\norm{\pi_{x,i}(b)}<\frac{\varepsilon}{3}$. Then there exists some $b'\in A_i$ such that $\norm{b-b'}<\frac{\varepsilon}{3}$ and $\pi_{i,x}(b')=0$. Hence there must be $b''\in A_i$ and $\varphi\in C_0(X\setminus\lbrace x\rbrace)$ such that $\norm{b'-\varphi b''}<\frac{\varepsilon}{3}$.
		Putting things together we obtain
		$$\norm{a-\varphi\psi_i(b'')}\leq \norm{ a- \psi_i(b)}+\norm{\psi_i(b)-\psi_i(b')}+\norm{\psi_i(b')-\psi_i(\varphi b'')}<\varepsilon$$
		and hence $ker(\pi)\subseteq I_x$, which completes the proof.
	\end{proof}
	Next, we want to show that taking the limit of an inductive system commutes with pullbacks: Let $(A_i,\varphi_{i,j})$ be an inductive system of $C_0(X)$-algebras and $f:Y\rightarrow X$ a continuous map. Then we get $C_0(Y)$-linear $*$-homo\-morphisms $f^*\varphi_{i,j}:f^*A_j\rightarrow f^* A_{i}$ by the formula $$(f^*\varphi_{i,j})(\xi)(y)=(\varphi_{i,j})_{f(y)}(\xi(y)).$$
	as in Lemma \ref{Lem:PullbackOfHomomorphisms}. 
	\begin{prop}\label{Prop:LimitsAndPullbacks}
		Let $(A_i,\varphi_{i,j})$ be an inductive system of $C_0(X)$-al\-gebras and $f:Y\rightarrow X$ a continuous map.
		Then $(f^*A_i,f^*\varphi_{i,j})$ is an inductive system of $C_0(Y)$-algebras and $f^*(\lim_i A_i)$ is $C_0(Y)$-linearly isomorphic to $\lim_i f^*(A_i)$.
	\end{prop}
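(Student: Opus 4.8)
The plan is to construct a canonical comparison homomorphism between the two $C_0(Y)$-algebras and then verify that it is an isomorphism fibrewise, using Lemma~\ref{Lem:IsomorphismCriteriumForC(X)-linearHomomorphisms}. The first assertion, that $(f^*A_i,f^*\varphi_{i,j})$ is again an inductive system, is purely formal: by the functoriality statement in Lemma~\ref{Lem:PullbackOfHomomorphisms} we have $f^*\varphi_{i,j}\circ f^*\varphi_{j,k}=f^*(\varphi_{i,j}\circ\varphi_{j,k})=f^*\varphi_{i,k}$, so the coherence relations of the original system are inherited, and by Lemma~\ref{Lem:InductiveLimitsOfC_0(X)-algebras} the limit $\lim_i f^*A_i$ carries a canonical $C_0(Y)$-algebra structure.

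Writing $A=\lim_i A_i$ and letting $\psi_i\colon A_i\to A$ denote the canonical $C_0(X)$-linear maps, I would apply the pullback functor to obtain $C_0(Y)$-linear homomorphisms $f^*\psi_i\colon f^*A_i\to f^*A$. Since $\psi_i\circ\varphi_{i,j}=\psi_j$, functoriality again gives $f^*\psi_i\circ f^*\varphi_{i,j}=f^*\psi_j$, so the family $(f^*\psi_i)_i$ is compatible with the connecting maps of the pulled-back system. The universal property of the inductive limit then yields a unique $C_0(Y)$-linear homomorphism
$$\Psi\colon \lim_i f^*A_i\longrightarrow f^*A=f^*\bigl(\lim_i A_i\bigr).$$

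It then remains to show that $\Psi$ is bijective, and by Lemma~\ref{Lem:IsomorphismCriteriumForC(X)-linearHomomorphisms} it suffices to check that each fibre map $\Psi_y$ is bijective. Here the two preceding lemmas do the work. On the one hand, the fibre of $\lim_i f^*A_i$ over $y$ is $\lim_i (f^*A_i)_y=\lim_i (A_i)_{f(y)}$, using the lemma identifying fibres of an inductive limit with the inductive limit of the fibres together with the canonical identification $(f^*A_i)_y=(A_i)_{f(y)}$. On the other hand, the fibre of $f^*A$ over $y$ is $A_{f(y)}=(\lim_i A_i)_{f(y)}\cong\lim_i (A_i)_{f(y)}$ by the same lemma. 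Under these identifications both fibres become $\lim_i (A_i)_{f(y)}$. Tracing the defining relation of $\Psi$ on the fibre over $y$, and using $(f^*\psi_i)_y=(\psi_i)_{f(y)}$, shows that $\Psi_y$ intertwines the two families of canonical structure maps $(A_i)_{f(y)}\to\lim_i (A_i)_{f(y)}$; by the uniqueness clause of the universal property $\Psi_y$ must be the identity, hence bijective.

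All the conceptual steps are routine applications of functoriality and the universal property. The only point that requires genuine care is the last one: one must keep the several ``canonical'' identifications mutually consistent in order to conclude that $\Psi_y$ really is the identity on $\lim_i (A_i)_{f(y)}$ rather than merely \emph{some} isomorphism. Once the defining relations $\Psi\circ(\text{structure map})=f^*\psi_i$ are matched up fibrewise, however, the uniqueness of the map induced by the universal property settles this automatically, so I expect no deeper obstacle.
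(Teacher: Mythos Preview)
Your proof is correct and follows essentially the same route as the paper: construct $\Psi$ from the universal property via the pulled-back structure maps $f^*\psi_i$, then verify bijectivity fibrewise using the identification of fibres of an inductive limit with the limit of the fibres. The paper phrases the last step as ``$\Psi_y$ coincides with the isomorphism $\lim_i(A_i)_{f(y)}\to A_{f(y)}$ from the previous Lemma,'' which is exactly your uniqueness argument spelled out.
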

	\begin{proof}
		Let $A=\lim_i A_i$ and $\psi_i:A_i\rightarrow A$ be the canonical $\ast$-homo\-morphisms. Then by Lemma \ref{Lem:PullbackOfHomomorphisms} we obtain $C_0(Y)$-linear $*$-homo\-morphisms $f^*\psi_i:f^*A_i\rightarrow f^*A$ such that $f^*\psi_{i}\circ f^*\varphi_{i,j}=f^*(\psi_{i}\circ \varphi_{i,j})=f^*\psi_j$. Using the universal property of the limit, we obtain a $C_0(Y)$-linear $*$-homo\-morphism $$\Psi:\lim\limits_{i}f^*A_i\rightarrow f^*A.$$
		To show that it is an isomorphism, it is enough to check that $\Psi_y$ is an isomorphism for all $y\in Y$. But under the identifications
		$$(\lim_{i}f^*A_i)_y\cong \lim_{i}(A_i)_{f(y)}\text{ and }(f^*A)_y\cong A_{f(y)}$$ the map $\Psi_y$ coincides with the isomorphism 
		$$\lim\limits_{i}(A_i)_{f(y)}\rightarrow A_{f(y)}$$
		from the previous Lemma.
	\end{proof}
	
	Suppose now that $(A_i,\varphi_{i,j})$ is an inductive system of $G$-alge\-bras, such that all the connecting homomorphisms are $G$-equivariant. We have already seen in Lemma \ref{Lem:InductiveLimitsOfC_0(X)-algebras}, that $A=\lim_i A_i$ is a $C_0(G^{(0)})$-algebra in a canonical way, such that all the homo\-morphisms $\psi_i:A_i\rightarrow A$ are $C_0(G^{(0)})$-linear. The following Proposition shows how we can use the $G$-actions at each stage of the sequence to obtain a $G$-action on the limit.
	\begin{prop}
		Let $(A_i,\varphi_{i,j})$ be an inductive system of $G$-algebras, such that $\varphi_{i,j}$ is $G$-equivariant for all $i,j\in I$ with $i\geq j$. Let $A:=\lim_i A_i$ and $\psi_i:A_i\rightarrow A$ be the canonical maps. Then there exists a canonical $G$-action on $A$, such that $\psi_i$ is $G$-equivariant for all $i\in I$.
	\end{prop}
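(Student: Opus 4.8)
The plan is to build the $G$-action on $A$ through the equivalent description of a groupoid action as a $C_0(G)$-linear isomorphism $\alpha\colon d^*A\to r^*A$ whose induced fibre maps satisfy the cocycle condition, as provided by \cite[Lemma~4.3]{MR2547343}. For each $i\in I$ the $G$-algebra structure on $A_i$ is encoded by a $C_0(G)$-linear isomorphism $\alpha^i\colon d^*A_i\to r^*A_i$ whose fibre maps $\alpha^i_g\colon (A_i)_{d(g)}\to (A_i)_{r(g)}$ satisfy $\alpha^i_{gh}=\alpha^i_g\alpha^i_h$. The hypothesis that each connecting map $\varphi_{i,j}$ is $G$-equivariant then translates, via the fibrewise reformulation $\alpha^i_g\circ(\varphi_{i,j})_{d(g)}=(\varphi_{i,j})_{r(g)}\circ\alpha^j_g$, into the commutativity of the squares
$$\alpha^i\circ d^*\varphi_{i,j}=r^*\varphi_{i,j}\circ\alpha^j\qquad(i\geq j),$$
where $d^*\varphi_{i,j}$ and $r^*\varphi_{i,j}$ are the pullback morphisms of Lemma~\ref{Lem:PullbackOfHomomorphisms}.

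Next I would apply Proposition~\ref{Prop:LimitsAndPullbacks} to the two continuous maps $d,r\colon G\to G^{(0)}$ to obtain canonical $C_0(G)$-linear isomorphisms $d^*A\cong\lim_i d^*A_i$ and $r^*A\cong\lim_i r^*A_i$, compatible with the canonical maps $d^*\psi_i$, $r^*\psi_i$. Under these identifications the displayed squares say exactly that $(\alpha^i)_i$ is a morphism between the inductive systems $(d^*A_i,d^*\varphi_{i,j})$ and $(r^*A_i,r^*\varphi_{i,j})$, so the universal property of the inductive limit yields a $C_0(G)$-linear $*$-homomorphism $\alpha\colon d^*A\to r^*A$ characterised by $\alpha\circ d^*\psi_i=r^*\psi_i\circ\alpha^i$. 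Since each $\alpha^i$ is an isomorphism and the inverse family $\bigl((\alpha^i)^{-1}\bigr)_i$ again intertwines the connecting maps, $\alpha$ is an isomorphism; alternatively one checks this fibrewise via the criterion of Lemma~\ref{Lem:IsomorphismCriteriumForC(X)-linearHomomorphisms}.

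It remains to verify the cocycle condition for $\alpha$ and the equivariance of the $\psi_i$. Using the fibre-identification lemma above, under $(\lim_i A_i)_x\cong\lim_i(A_i)_x$ the fibre map $\alpha_g\colon A_{d(g)}\to A_{r(g)}$ is precisely the inductive limit $\lim_i\alpha^i_g$, so for a composable pair $(g,h)$ functoriality of the limit gives $\alpha_{gh}=\lim_i\alpha^i_{gh}=\lim_i(\alpha^i_g\alpha^i_h)=\alpha_g\alpha_h$, whence $(A,G,\alpha)$ is a genuine groupoid dynamical system. Equivariance of $\psi_i$ is then immediate from the defining relation $\alpha\circ d^*\psi_i=r^*\psi_i\circ\alpha^i$, which on the fibre over $g$ reads $\alpha_g\circ\psi_{i,d(g)}=\psi_{i,r(g)}\circ\alpha^i_g$. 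The only genuine work is bookkeeping: keeping the several canonical identifications of pulled-back and limit fibres consistent throughout. Once the equivalence between $G$-actions and the isomorphisms $\alpha^i$ is in place, everything reduces to the universal property of the limit, so I expect no substantial obstacle beyond this tracking of identifications.
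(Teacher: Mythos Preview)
Your argument is correct and follows essentially the same route as the paper: encode each action by the $C_0(G)$-linear isomorphism $\alpha^i\colon d^*A_i\to r^*A_i$, use $G$-equivariance of the connecting maps to obtain the commuting squares, and then combine Proposition~\ref{Prop:LimitsAndPullbacks} with the universal property of the limit to produce $\alpha\colon d^*A\to r^*A$. If anything, you spell out the cocycle identity and the equivariance of the $\psi_i$ more explicitly than the paper does.
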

	\begin{proof}
		For each $i\in I$ let $\alpha_i:d^*A_i\rightarrow r^*A_i$ denote the $C_0(G)$-linear isomorphism implementing the action of $G$ on $A_i$. Since $\varphi_{i,j}$ is $G$-equivariant for all $i,j\in I$ with $i\geq j$ we have commutative diagrams
		\begin{center}
			\begin{tikzpicture}[description/.style={fill=white,inner sep=2pt}]
			\matrix (m) [matrix of math nodes, row sep=3em,
			column sep=2.5em, text height=1.5ex, text depth=0.25ex]
			{ d^*A_j &  r^*A_j\\
				d^*A_{i} &  r^*A_{i}\\ };
			\path[->,font=\scriptsize]
			(m-1-1) edge node[auto] {$ \alpha_j $} (m-1-2)
			(m-2-1) edge node[auto] {$ \alpha_{i} $} (m-2-2)
			(m-1-1) edge node[auto] { $ d^*\varphi_{i,j} $ } (m-2-1)
			(m-1-2) edge node[auto] {$ r^*\varphi_{i,j} $} (m-2-2)
			;
			\end{tikzpicture}
		\end{center}
		By the universal property, we obtain a $C_0(G)$-linear $*$-isomorphism between the respective limits. Combining this with Proposition \ref{Prop:LimitsAndPullbacks} we obtain a $C_0(G)$-linear $*$-isomorphism
		$$\alpha:d^*A\rightarrow r^*A.$$
		As each $\alpha_i$ is compatible with the multiplication in $G$, so is the limit homomorphism $\alpha$.
	\end{proof}
	
\section{Going-Down Functors}
	We would like to use the Going-Down principle as developed in \cite{1806.00391}. Although \cite[Theorem~7.10]{1806.00391} can be applied directly in many situations, oftentimes it is not directly a map on $\K_*^{\mathrm{top}}(G;A)$ one is interested in, but a map on a construction involving this group, which still shares the same basic functorial properties.
	Moreover, the map in question must not necessarily be given by taking the Kasparov product. A closer inspection of the proof of \cite[Theorem~7.10]{1806.00391} reveals, that we only used the naturality of the Kasparov product. Hence, following \cite{CEO} we can use the language of category theory to obtain a more general result.
	To begin with, given a second countable ample groupoid $G$, we denote by $\mathcal{C}(G)$ the category of separable commutative proper $G$-algebras, i.e. algebras of the form $C_0(X)$, where $X$ is a second countable proper $G$-space. Also let $\mathcal{S}(G)$ be the set containing $G$ and all of its compact open subgroupoids.
	\begin{defi}\label{Def:GDfunctor} Let $G$ be an ample groupoid.
		A \textit{Going-Down functor} for $G$ is a collection of $\ZZ$-graded functors $\mathcal{F}=(\mathcal{F}^n_H)_{H\in \mathcal{S}(G)}$, where $\mathcal{F}^n_{H}$ is a covariant additive functor from the category of second countable, proper, locally compact $G$-spaces (with morphisms being the proper, continuous $G$-maps) to the category of abelian groups, such that the following axioms are satisfied:
		\begin{enumerate}
			\item Cohomology axioms: For every $H\in \mathcal{S}(G)$
			\begin{enumerate}
				\item the functor $\mathcal{F}_H^n$ is homotopy invariant;
				\item the functor $\mathcal{F}_H^n$ is half-exact, i.e. for every short exact sequence $$0\longrightarrow I\longrightarrow A\longrightarrow A/I\longrightarrow 0$$
				in $\mathcal{C}(H)$, the sequence
				$$\mathcal{F}_H^n (A/I)\longrightarrow \mathcal{F}_H^n(A)\longrightarrow\mathcal{F}_H^n$$
				is exact in the middle; and 
				\item for each $n\in\ZZ$ there is a natural equivalence between $\mathcal{F}_H^{n+1}$ and the functor $A\mapsto \mathcal{F}_H^n(A\otimes C_0(\RR))$, where $H$ acts trivially on the second tensor factor.
			\end{enumerate}
			\item Induction axiom: For every compact open subgroupoid $H$ of $G$, there are natural equivalences $I_H^G(n)$ between the functors $\mathcal{F}_H^n$ and $\mathcal{F}_G^n\circ Ind_H^G$, compatible with suspension,
			where $Ind_H^G:\mathcal{C}(H)\rightarrow \mathcal{C}(G)$, $A\mapsto Ind_H^{G_{\mid H^{(0)}}} A$ denotes induction from $H$-algebras to $G$-algebras (see \cite[Section~3]{1806.00391} for a detailed discussion of induction for groupoids).\color{black}
		\end{enumerate}
		If $\mathcal{F}$ is a Going-Down functor for $G$, we define $$\mathcal{F}^n(G):=\lim\limits_{X\subseteq \mathcal{E}(G)}\mathcal{F}^n_G(C_0(X)),$$
		where $X$ runs through the $G$-compact subsets of $\mathcal{E}(G)$.
	\end{defi}
	Our main examples of Going-Down functors arise from the topological $\K$-theory of ample groupoids:
	\begin{ex}\label{Example:Going-Down functor}
		Let $G$ be a second countable ample groupoid and $A$ be a fixed $G$-algebra. Define $\mathcal{F}_H^*(C_0(X)):=\mathrm{KK}^H_*(C_0(X),A_{\mid H})$ for $H\in\mathcal{S}(G)$ and $C_0(X)\in\mathcal{C}(H)$, where $\mathrm{KK}^H$ denotes Le Gall's groupoid equivariant $\mathrm{KK}$-theory (see \cite{LeGall}). Then $\mathcal{F}$ is a $\ZZ/2\ZZ$-graded Going-Down functor:
		\begin{enumerate}
			\item Cohomology axioms:
			\begin{enumerate}
				\item Homotopy invariance is clear, since groupoid equivariant $\mathrm{KK}$-theory is invariant with respect to equivariant homotopies in the first variable.
				\item Half-exactness follows from \cite[Proposition~7.2 and Lemma~7.7]{Tu99}.
				\item The suspension axiom is clear from the definition of the higher equivariant $\mathrm{KK}$-groups.
			\end{enumerate}
			\item The natural equivalence required in the induction axiom is provided by the compression homomorphism defined prior to \cite[Theorem~6.2]{1806.00391} or rather its inverse, the inflation map (see also \cite[Lemma~5.2.6]{DellAieraThesis}). From the definition of the compression homomorphism it is easy to see, that it indeed provides a natural transformation with respect to equivariant $\ast$-homomorphisms.
		\end{enumerate}
	\end{ex}
	
	The following lemma can be proved using standard homotopy techniques (see for example \cite[§21.4]{MR1656031})
	\begin{lemma}\label{Lem:LES} Let $\mathcal{F}$ be a Going-Down functor.
		For every short exact sequence $$0\longrightarrow I\longrightarrow A\longrightarrow A/I\longrightarrow 0$$ in $\mathcal{C}(H)$ there are natural maps $\partial_n:\mathcal{F}_H^n(I)\rightarrow\mathcal{F}_H^{n+1}(A/I)$ providing a long exact sequence
		$$\cdots\longrightarrow \mathcal{F}_H^n(A/I)\longrightarrow \mathcal{F}_H^{n}(A)\longrightarrow\mathcal{F}_H^n(I)\stackrel{\partial_n}{\longrightarrow}\mathcal{F}_H^{n+1}(A/I)\longrightarrow\cdots$$
	\end{lemma}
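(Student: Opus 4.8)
The plan is to run the standard argument that upgrades a homotopy invariant, half-exact functor to one carrying long exact sequences, in the form developed for cohomology theories on $\mathrm{C}^*$-algebras (cf. \cite[§21.4]{MR1656031}). Since each $\mathcal{F}_H^n$ is covariant on proper $G$-spaces, it is contravariant as a functor of the algebras in $\mathcal{C}(H)$, so I fix $F:=\mathcal{F}_H^n$ and work with the $*$-homomorphisms $\pi\colon A\to A/I$ and $\iota\colon I\to A$. Axioms (1)(a)--(c) furnish exactly the three ingredients the argument needs: homotopy invariance, half-exactness, and the suspension isomorphism $F(B\otimes C_0(\RR))\cong\mathcal{F}_H^{n+1}(B)$. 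A preliminary point with no analogue in the classical setting is that one must keep all auxiliary constructions inside $\mathcal{C}(H)$: the suspension $SB:=B\otimes C_0(\RR)$, the cone $CB:=B\otimes C_0((0,1])$, and the mapping cone $C_\varphi:=\{(a,f)\in A\oplus CB\mid f(1)=\varphi(a)\}$ of a morphism $\varphi\colon A\to B$. Each is again commutative and separable and carries the diagonal action, trivial on the interval and line coordinates; because those extra coordinates contribute a trivial (hence proper) factor, the underlying $G$-space stays proper and the structural maps are proper $G$-maps, so these algebras and maps live in $\mathcal{C}(H)$.

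First I would establish exactness at a single spot. For a morphism $\varphi\colon A\to B$ form the mapping cylinder $Z_\varphi:=\{(a,f)\in A\oplus C([0,1],B)\mid f(1)=\varphi(a)\}$; the projection $Z_\varphi\to A$ is an equivariant homotopy equivalence and evaluation at $0$ yields a short exact sequence $0\to C_\varphi\to Z_\varphi\to B\to 0$. Applying half-exactness together with homotopy invariance then gives exactness of $F(B)\xrightarrow{\varphi^*}F(A)\xrightarrow{p^*}F(C_\varphi)$ in the middle, where $p\colon C_\varphi\to A$ is the projection. Next I would iterate along the Puppe sequence: the mapping cone of $p$ is equivariantly homotopy equivalent to $SB$, its mapping cone to $SA$, and so on, so repeated application of the one-spot exactness produces the exact sequence
\[
F(B)\xrightarrow{\varphi^*}F(A)\xrightarrow{p^*}F(C_\varphi)\longrightarrow F(SB)\longrightarrow F(SA)\longrightarrow\cdots.
\]
Replacing each term $F(S\,\cdot)$ by $\mathcal{F}_H^{n+1}(\cdot)$ via axiom (1)(c) both shifts the degree and supplies the maps $\partial_n$.

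To reach the statement I specialise to $\varphi=\pi$, so $B=A/I$, and must identify $F(C_\pi)$ with $F(I)$. The inclusion $\iota\colon I\to C_\pi$, $i\mapsto(i,0)$, fits into the short exact sequence $0\to I\xrightarrow{\iota}C_\pi\xrightarrow{q}C(A/I)\to 0$, where $q(a,f)=f$ has kernel exactly $I$ and the cone $C(A/I)$ is equivariantly contractible, hence killed by $F$. Half-exactness immediately forces $\iota^*\colon F(C_\pi)\to F(I)$ to be injective; its surjectivity, and thus the excision isomorphism $F(C_\pi)\cong F(I)$, is the technical heart and is obtained from the same Puppe machinery (the classical excision theorem for half-exact homotopy functors). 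Feeding this identification into the sequence above turns it into
\[
\cdots\longrightarrow\mathcal{F}_H^n(A/I)\longrightarrow\mathcal{F}_H^n(A)\longrightarrow\mathcal{F}_H^n(I)\xrightarrow{\partial_n}\mathcal{F}_H^{n+1}(A/I)\longrightarrow\cdots,
\]
with $\partial_n$ the composite $\mathcal{F}_H^n(I)\cong\mathcal{F}_H^n(C_\pi)\to\mathcal{F}_H^n(S(A/I))\cong\mathcal{F}_H^{n+1}(A/I)$.

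The main obstacle I expect is twofold. First, because the Going-Down functors are defined only on $\mathcal{C}(H)$, the closure of this category under cones, suspensions and mapping cones must be checked at the level of proper $G$-spaces rather than for abstract $\mathrm{C}^*$-algebras; this is routine but genuinely uses that the interval directions carry the trivial action. Second, the excision isomorphism $F(C_\pi)\cong F(I)$ does not follow formally from half-exactness alone and requires the full Puppe argument; once it and the naturality of $\partial_n$ (which is inherited from the functoriality of every construction above) are in hand, the long exact sequence follows.
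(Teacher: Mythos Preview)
Your proposal is correct and follows precisely the approach the paper has in mind: the paper does not spell out a proof but simply remarks that the lemma ``can be proved using standard homotopy techniques (see for example \cite[§21.4]{MR1656031}),'' and what you have written is exactly that Puppe-sequence/mapping-cone argument, together with the necessary verification that the auxiliary constructions stay inside $\mathcal{C}(H)$.
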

	\begin{defi}
		Let $\mathcal{F}$ and $\mathcal{G}$ be Going-Down functors for the ample groupoid $G$. A \textit{Going-Down transformation} is a collection $\Lambda=(\Lambda_H^n)_{H\in \mathcal{S}(G)}$ of natural transformations between $\mathcal{F}_H^n$ and $\mathcal{G}_H^n$ compatible with suspension, such that
		$I_H^G(n)\circ \Lambda_H^n=\Lambda_G^n\circ I_H^G(n)$.
	\end{defi}
	\begin{ex}
		Let $G$ be a second countable ample groupoid and $A$ and $B$ be separable $G$-algebras. Let $\mathcal{F}$ be the Going-Down functor defined by $\mathcal{F}_H^*(C_0(X))=\mathrm{KK}^H_*(C_0(X),A_{\mid H})$ and let $\mathcal{G}$ be the Going-Down functor defined by $\mathcal{G}_H^*(C_0(X))=\mathrm{KK}^H_*(C_0(X),B_{\mid H})$ as in Example \ref{Example:Going-Down functor}. Suppose that $x\in \mathrm{KK}^G(A,B)$. Then we can define a Going-Down transformation
		$\Lambda$ from $\mathcal{F}$ to $\mathcal{G}$ by letting $\Lambda_H^*(C_0(X))$ be the map $$\mathcal{F}_H^*(C_0(X))=\mathrm{KK}^H_*(C_0(X),A_{\mid H})\stackrel{\cdot \otimes x}{\rightarrow} \mathrm{KK}^H_*(C_0(X),B_{\mid H})=\mathcal{G}_H^*(C_0(X)).$$
		By associativity of the Kasparov product, $\Lambda_H^*$ is a natural transformation, which is clearly compatible with suspension. Compatibility with $I_H^G$ follows from \cite[Lemma~6.7]{1806.00391}.
	\end{ex}
	Using the naturality, a Going-Down transformation $\Lambda$ between two Going-Down functors $\mathcal{F}$ and $\mathcal{G}$ induces morphisms $\Lambda^n(G):\mathcal{F}^n(G)\rightarrow\mathcal{G}^n(G)$ in the limit.
	
	\begin{satz}\label{Theorem:Going-Down Theorem}
		Let $\mathcal{F}$ and $\mathcal{G}$ be two Going-Down functors for an ample groupoid $G$ and let $\Lambda$ be a Going-Down transformation between $\mathcal{F}$ and $\mathcal{G}$. Suppose that $\Lambda_H^n(C(H^{(0)})):\mathcal{F}_H^n(C(H^{(0)}))\rightarrow\mathcal{G}_H^n(C(H^{(0)}))$ is an isomorphism for all compact open subgroupoids $H$ of $G$. Then $\Lambda^n(G):\mathcal{F}^n(G)\rightarrow\mathcal{G}^n(G)$ is an isomorphism.
	\end{satz}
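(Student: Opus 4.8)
The plan is to mirror the proof of \cite[Theorem~7.10]{1806.00391}, retaining only those formal features that have now been abstracted into the axioms of Definition~\ref{Def:GDfunctor} and into the notion of a Going-Down transformation; the transformation $\Lambda$ takes over the role played there by the Kasparov product, and it is precisely the naturality of that product which the original argument exploited. The first move is a reduction to $G$-compact pieces. By definition $\mathcal{F}^n(G)=\lim_X \mathcal{F}_G^n(C_0(X))$ and $\mathcal{G}^n(G)=\lim_X \mathcal{G}_G^n(C_0(X))$, the colimits being taken over the directed family of $G$-compact subsets $X\subseteq\mathcal{E}(G)$, and $\Lambda^n(G)$ is the map they induce in the limit. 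Since this family is directed and a filtered colimit of isomorphisms of abelian groups is again an isomorphism, it suffices to prove that $\Lambda_G^n(C_0(X))$ is an isomorphism for each fixed $G$-compact $X$.

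The second, geometric, step is to equip such an $X$ with a skeletal filtration. Using the model of $\mathcal{E}(G)$ from \cite{1806.00391}, every $G$-compact $X$ lies inside a finite-dimensional $G$-simplicial complex, which carries a filtration $X^{(0)}\subseteq X^{(1)}\subseteq\cdots\subseteq X^{(d)}=X$ by $G$-invariant subsets. After barycentric subdivision one may assume that stabilisers fix simplices pointwise, so that each difference $X^{(k)}\setminus X^{(k-1)}$ is a disjoint union, indexed by the $G$-orbits of $k$-cells, of spaces induced from the stabilisers. Because $G$ is ample and acts properly on $\mathcal{E}(G)$, these stabilisers are compact open subgroupoids $H$, and the corresponding open cell is $H$-equivariantly $H^{(0)}\times\RR^k$ with $H$ acting trivially on the second factor. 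On the level of algebras this means that $C_0(X^{(k)}\setminus X^{(k-1)})$ is a finite direct sum of algebras of the form $\mathrm{Ind}_H^G\bigl(C(H^{(0)})\otimes C_0(\RR^k)\bigr)$.

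The third step propagates the hypothesis along the filtration by the five lemma. The filtration yields short exact sequences $0\to C_0(X^{(k)}\setminus X^{(k-1)})\to C_0(X^{(k)})\to C_0(X^{(k-1)})\to 0$ in $\mathcal{C}(G)$, to which Lemma~\ref{Lem:LES} attaches long exact sequences for $\mathcal{F}$ and for $\mathcal{G}$; naturality of $\Lambda$ renders these into a commuting ladder. Hence $\Lambda_G^n(C_0(X^{(k)}))$ is an isomorphism as soon as $\Lambda_G^n$ is an isomorphism on $C_0(X^{(k-1)})$ and on $C_0(X^{(k)}\setminus X^{(k-1)})$, and an induction on $k$ (with base $X^{(0)}=X$ on the $0$-skeleton handled identically) reduces the claim to the elementary cells. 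For a single cell, additivity of the functors lets us treat one summand $\mathrm{Ind}_H^G\bigl(C(H^{(0)})\otimes C_0(\RR^k)\bigr)$: the induction axiom, together with the compatibility $I_H^G(n)\circ\Lambda_H^n=\Lambda_G^n\circ I_H^G(n)$, identifies $\Lambda_G^n$ on this algebra with $\Lambda_H^n\bigl(C(H^{(0)})\otimes C_0(\RR^k)\bigr)$, and the $k$-fold suspension isomorphism (with which $\Lambda$ is compatible, and which applies since $H$ acts trivially on the $\RR^k$-factor) identifies this in turn with $\Lambda_H^{n+k}(C(H^{(0)}))$. The latter is an isomorphism by hypothesis, which closes the induction and completes the proof.

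The genuinely delicate point is the second step: exhibiting a skeletal filtration of an arbitrary $G$-compact $X$ whose successive quotients are induced from compact open subgroupoids, and verifying that the stabilisers really are compact open so that both the induction axiom and the standing hypothesis on $C(H^{(0)})$ apply. This requires the ample and second-countability assumptions and the explicit structure of the classifying space $\mathcal{E}(G)$ from \cite{1806.00391}. Once this geometric input is available, everything else is formal homological algebra (the five lemma, the long exact sequences of Lemma~\ref{Lem:LES}, and the suspension isomorphisms), so I expect the cell decomposition to be the main obstacle rather than the bookkeeping of the transformation.
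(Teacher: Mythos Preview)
Your outline follows the paper's proof closely---reduce to $G$-compact $X$, climb a skeletal filtration via the long exact sequences of Lemma~\ref{Lem:LES}, and treat the open strata by suspension plus the induction axiom---and you correctly identify the cell decomposition as the crux. But the second step, as written, imports a group-theoretic picture that does not survive the passage to groupoids. You describe the open $k$-stratum as a ``disjoint union, indexed by the $G$-orbits of $k$-cells, of spaces induced from the stabilisers''. For an ample groupoid acting properly, orbits are typically neither open nor closed, and the stabiliser of a single barycentre is merely an isotropy \emph{group}, not a compact open sub\emph{groupoid} with a nontrivial unit space $H^{(0)}$; so neither the indexing set nor the inducing subgroupoids in your direct-sum description $\bigoplus\mathrm{Ind}_H^G\bigl(C(H^{(0)})\otimes C_0(\RR^k)\bigr)$ make sense as stated.

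The paper repairs this by working with compact open \emph{slices} rather than orbits: one takes $U\subseteq X'$ compact open with the anchor map $p$ restricting to a homeomorphism onto a clopen subset of $G^{(0)}$, and sets $H=\{g\in G: gU\cap U\neq\emptyset\}$, which is a genuine compact open subgroupoid with $H^{(0)}\cong U$ and $G\times_H U\cong GU$. The barycentre space $X'$ is then covered by finitely many saturated opens $GU_1,\dots,GU_n$, which in general overlap, so a Mayer--Vietoris argument (rather than a direct-sum splitting) reduces to the single-slice case handled by the induction axiom. There is also a final step you have glossed over: an arbitrary $G$-compact $X\subseteq\mathcal{E}(G)$ is not itself a $G$-simplicial complex but only admits a $G$-map into one, and a zig-zag argument using the universal property of $\mathcal{E}(G)$ is needed to finish.
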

	\begin{proof} The proof is essentially the same as that of \cite[Theorem~7.10]{1806.00391}, replacing the group $\mathrm{KK}^H_*(C_0(X),A_{\mid H})$ by $\mathcal{F}^*_H(C_0(X))$ and $\mathrm{KK}^H_*(C_0(X),B_{\mid H})$ by $\mathcal{G}^*_H(C_0(X))$, and the map $\cdot \otimes res_H^G(x)$ by $\Lambda_H^*$. For convenience of the reader we recall the main steps in the proof:
	By definition of $\Lambda^n(G)$ we have to show that $\Lambda^n_G(C_0(X))$ is an isomorphism for all $G$-compact proper $G$-spaces $X\subseteq \mathcal{E}(G)$. This is done by reducing to more and more special spaces $X$. For our exposition we reverse the order and start with the most special situation:
	
	Suppose that $X=GU$, where $U\subseteq X$ is a compact open subset of $X$ such that the anchor map $p:X\rightarrow G^{(0)}$ restricts to a homeomorphism from $U$ onto the (compact) open set $p(U)\subseteq G^{(0)}$.
	Then it is easy to see that $H=\lbrace g\in G\mid gU\cap U\neq \emptyset\rbrace$ is a compact open subgroupoid of $G$ and there is a canonical $G$-equivariant homeomorphism $G\times_H U\cong X$. Using the fact that $Ind_H^G(C_0(U))=C_0(G\times_H U)$ and that the natural equivalence $I_H^G(n)$ is compatible with $\Lambda$ in the sense that $I_H^G(n)\circ \Lambda_H^n=\Lambda_G^n\circ I_H^G(n)$, we conclude that $\Lambda^n_G(C_0(X))$ is an isomorphism in this case.
	
	Next, consider the case, where every point $x\in X$ admits a compact open neighbourhood $U$ such that the anchor map $p:X\rightarrow G^{(0)}$ restricts to a homeomorphism from $U$ onto the (compact) open set $p(U)\subseteq G^{(0)}$. Using $G$-compactness of $X$ we can write $X=\bigcup_{i=1}^n GU_i$, where each $U_i$ is a compact open set with the property above. Then we can inductively apply the Mayer-Vietoris sequences for $\mathcal{F}$ and $\mathcal{G}$ respectively and compare them via $\Lambda$. Since two out of three arrows in the resulting diagram are isomorphisms by the first step above, an application of the Five-Lemma completes the proof in this case.
	
	Next, one realizes that $X$ as in the second step above is an instance of a zero-dimensional $G$-simplicial complex in the sense of \cite[Definition~7.5]{1806.00391}. Hence we can proceed by induction on the dimension $n$ of a (typed) $G$-simplicial complex, to show the claim for all finite dimensional $G$-simplicial complexes. Let $X$ be a (typed) $G$-simplicial complex of dimension $n>0$,  $Y$ be its $(n-1)$-skeleton, and $U=X\setminus Y$ the union of all open $n$-simplices.
	Then we get an exact sequence of $G$-algebras
	$$0\longrightarrow C_0(U)\longrightarrow C_0(X)\longrightarrow C_0(Y)\longrightarrow 0.$$
	Applying the corresponding long exact sequences for $\mathcal{F}$ and $\mathcal{G}$ from Lemma \ref{Lem:LES} and comparing them with $\Lambda$, our claim will follow from the Five-Lemma, once we show that $\Lambda^n_G(C_0(U))$ is an isomorphism. 
	But $U$ is equivariantly homeomorphic to $X'\times \RR^n$, where $X'$ denotes the barycenters of $n$-dimensional simplices. Thus, we have $\mathcal{F}^m_G(C_0(U))\cong \mathcal{F}^{m+n}_G(C_0(X'))$. Since $\Lambda$ is compatible with suspensions, it is enough to show that $\Lambda_G^m(C_0(X'))$ is an isomorphism for all $m\in \ZZ$. But $X'$ is a $G$-compact, proper $G$-space such that every point $x\in X'$ admits a compact open neighbourhood $U$ such that the anchor map $p:X'\rightarrow G^{(0)}$ restricts to a homeomorphism from $U$ onto the (compact) open set $p(U)\subseteq G^{(0)}$. Hence we reduced everything to the second step above.
	
	Finally, it follows from \cite[Lemma~7.7]{1806.00391} and \cite[Proposition~3.2]{Tu12} that every $G$-compact proper $G$-space admits a $G$-equivariant continuous map into a finite dimensional $G$-simplicial complex. Together with the universal property of $\mathcal{E}(G)$ and a zig-zag-type argument it follows that $\Lambda^m_G(C_0(X))$ is an isomorphism for arbitrary $G$-compact proper subspaces $X\subseteq \mathcal{E}(G)$.
	\end{proof}

\section{Continuity of Topological K-theory}
In this section we will show, that the topological $\K$-theory of an ample groupoid is continuous with respect to the coefficient algebra.
	Recall, that an étale groupoid is called \textit{exact}, if for every $G$-equivariant exact sequence
	$$0\rightarrow I\rightarrow A\rightarrow B\rightarrow 0$$
	of $G$-algebras, the corresponding sequence
		$$0\rightarrow I\rtimes_r G\rightarrow A\rtimes_r G\rightarrow B\rtimes_r G \rightarrow 0$$
		of reduced crossed products is exact.
	The following is an analogue of \cite[Lemma~2.5]{MR2010742} for étale groupoids:
	\begin{lemma}\label{Lemma:Proper Groupoids and inductive limits}
		Let $G$ be an étale groupoid and $(A_n,\varphi_n)_n$ an inductive sequence of $G$-algebras with limit $A=\lim_n A_n$. Then $(A_n\rtimes_r G,\varphi_n\rtimes G)_n$ is an inductive sequence of $\mathrm{C}^*$-algebras. Suppose additionally, that either one of the following conditions hold:
		\begin{enumerate}
			\item All the connecting maps $\varphi_n$ are injective.
			\item The groupoid $G$ is exact.
		\end{enumerate}
		Then $A\rtimes_r G=\lim_n A_n\rtimes_r G$ with respect to the connecting homomorphisms $\varphi_n\rtimes G$.
	\end{lemma}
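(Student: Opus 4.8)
The plan is to prove the two assertions separately, the first being a formal consequence of functoriality and the second the substantial point. For the first statement I would invoke functoriality of the reduced crossed product: a $G$-equivariant $\ast$-homomorphism $\varphi\colon A\to B$ induces $\varphi\rtimes G\colon A\rtimes_r G\to B\rtimes_r G$, determined on the dense subalgebra by $(\varphi\rtimes G)(f)(g)=\varphi_{r(g)}(f(g))$. Applying this to the $\varphi_n$ and using compatibility of the connecting maps shows at once that $(A_n\rtimes_r G,\varphi_n\rtimes G)_n$ is an inductive system. Since the canonical maps $\psi_n\colon A_n\to A$ are $G$-equivariant (as established in Section~3), the universal property of the inductive limit assembles the $\psi_n\rtimes G$ into a single $\ast$-homomorphism $\Psi\colon\lim_n(A_n\rtimes_r G)\to A\rtimes_r G$, and the whole content of the second statement is that $\Psi$ is an isomorphism. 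I would establish surjectivity and injectivity of $\Psi$ separately.

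Surjectivity requires no extra hypothesis. As the image of a $\ast$-homomorphism is closed, it suffices to prove that $\bigcup_n(\psi_n\rtimes G)(A_n\rtimes_r G)$ is dense in $A\rtimes_r G$, and for this it is enough to approximate elements of $\Gamma_c(G,r^*\mathcal{A})$. Decomposing such a section into a finite sum of sections supported on compact open bisections $S$, I would use that on such an $S$ the reduced norm of a section equals the supremum of the fibrewise norms, so the task reduces to approximating a continuous compactly supported field $g\mapsto f(g)\in A_{r(g)}$ by one taking values in $(\psi_n)_{r(g)}((A_n)_{r(g)})$. This follows from the fibrewise density of $\bigcup_n(\psi_n)_x((A_n)_x)$ in $A_x$, a consequence of the identification of the fibres of the limit proved earlier, together with a partition of unity argument over the compact set $S$.

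For injectivity under hypothesis (1), the key input is that the reduced crossed product preserves injective equivariant homomorphisms. I would prove this using the faithful conditional expectation $E_A\colon A\rtimes_r G\to A$ given on $\Gamma_c(G,r^*\mathcal{A})$ by restricting sections to $G^{(0)}$, which is natural in the sense that $E_B\circ(\varphi\rtimes G)=\varphi\circ E_A$. If $\varphi$ is injective then so are all its fibre maps by Lemma~\ref{Lem:IsomorphismCriteriumForC(X)-linearHomomorphisms}, and the standard expectation argument, namely $\varphi\rtimes G(x)=0\Rightarrow\varphi(E_A(x^*x))=E_B(\varphi\rtimes G(x^*x))=0\Rightarrow E_A(x^*x)=0\Rightarrow x=0$, yields injectivity of $\varphi\rtimes G$. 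When all $\varphi_n$ are injective, so are all $\psi_n$; hence every $\varphi_n\rtimes G$ and $\psi_n\rtimes G$ is isometric, the connecting maps of the limit system are isometric, and $\Psi$ is isometric on the dense union, hence injective.

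Under hypothesis (2) I would reduce to the injective case. Factoring $\psi_n$ as $A_n\xrightarrow{\pi_n}B_n\hookrightarrow A$ with $B_n=\psi_n(A_n)$, the $B_n$ form an increasing system of $G$-subalgebras with injective connecting maps and $\lim_n B_n=A$, so case (1) gives $\lim_n(B_n\rtimes_r G)\cong A\rtimes_r G$. Writing $I_n=\ker\psi_n$, exactness of $G$ applied to $0\to I_n\to A_n\to B_n\to 0$ yields short exact sequences of crossed products compatible with the connecting maps, and since the inductive limit functor is exact I obtain
\[
0\to\lim_n(I_n\rtimes_r G)\to\lim_n(A_n\rtimes_r G)\xrightarrow{\ \Psi\ }A\rtimes_r G\to 0 .
\]
It then remains to show $\lim_n(I_n\rtimes_r G)=0$. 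Since each $a\in I_n$ satisfies $\|\varphi_{m,n}(a)\|\to 0$ as $m\to\infty$ one has $\lim_n I_n=0$, and I would upgrade this by proving $\|(\varphi_{m,n}\rtimes G)(y)\|_r\to 0$ for every $y\in\Gamma_c(G,r^*\mathcal{I}_n)$: reducing to $y$ supported on a compact open bisection $S$, this norm equals $\sup_{g\in S}\|(\varphi_{m,n})_{r(g)}(y(g))\|$, whose integrand is upper semicontinuous, decreasing in $m$, and tends to $0$ pointwise, so a Dini-type argument on the compact set $S$ forces uniform convergence. I expect this last point to be the main obstacle, since it is exactly where the exactness hypothesis enters and where the passage from fibrewise to uniform reduced-norm estimates must be made with care; the preservation-of-injectivity lemma via the conditional expectation is the other essential, though more routine, ingredient.
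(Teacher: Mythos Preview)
Your argument is correct. A small wording slip: for a general étale groupoid you cannot always decompose a section as a sum of pieces supported on \emph{compact} open bisections, but this is harmless since you only need open bisections and the Dini step is carried out on the compact support of the section anyway.

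The route, however, differs from the paper's in two places. For surjectivity in case~(1) the paper avoids any partition-of-unity reasoning: it simply approximates the dense elements $f\otimes a\in\Gamma_c(G,r^*\mathcal A)$ with $f\in C_c(G)$, $a\in A$, by $f\otimes b$ with $b\in A_n$, which is immediate since $\bigcup_n\psi_n(A_n)$ is dense in $A$. More significantly, for case~(2) the paper bypasses your Dini argument entirely by a purely algebraic reduction. It works with $I_n:=\ker\varphi_n$ (rather than $\ker\psi_n$) and invokes twice the elementary fact that for any inductive system $(B_n,\psi_n)$ one has $\lim_n B_n=\lim_n B_n/\ker\psi_n$ with injective induced connecting maps: once at the coefficient level to get $A=\lim_n A_n/I_n$ and hence $A\rtimes_r G=\lim_n (A_n/I_n)\rtimes_r G$ by case~(1), and once at the crossed-product level after using exactness to identify $(A_n/I_n)\rtimes_r G$ with $(A_n\rtimes_r G)/\ker(\varphi_n\rtimes G)$. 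This sidesteps any analytic estimate on the reduced norms. Your approach trades that economy for an explicit Dini-type uniform estimate; both are valid, but the paper's is shorter and makes clearer that exactness is used only to identify kernels and quotients at the crossed-product level.
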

	\begin{proof}
		It is clear that $(A_n\rtimes_r G,\varphi_n\rtimes G)$ is an inductive sequence of $\mathrm{C}^*$-algebras. For the second statement we follow the argument in \cite[Lemma~2.5]{MR2010742}:
		In the case of $(1)$ we may regard each $A_n\rtimes_r G$ as a subalgebra of $A\rtimes_r G$ and hence also the inductive limit $\overline{\bigcup_{n\in\NN}A_n\rtimes_r G}$ is contained in $A\rtimes_r G$. Let us check that $\bigcup_{n\in\NN} \Gamma_c(G,r^*\mathcal{A}_n)\subseteq \overline{\bigcup_{n\in\NN}A_n\rtimes_r G}$ is dense in $A\rtimes_r G$. First, consider elements of the form $f\otimes a\in \Gamma_c(G,r^*\mathcal{A})$ for $f\in C_c(G)$ and $a\in A$. Let $\varepsilon>0$ be given. Then, by $(1)$ we can find $n\in\NN$ and $b\in A_n$ such that $\norm{a-b}<\frac{\varepsilon}{\norm{f}}$. It follows, that $f\otimes b\in \Gamma_c(G,r^*\mathcal{A}_n)$ with $\norm{f\otimes a-f\otimes b}\leq \norm{f}\norm{a-b}<\varepsilon$. Since finite sums of elements of the form $f\otimes a$ are dense in $\Gamma_c(G,r^*\mathcal{A})$ in the inductive limit topology, it follows that $\bigcup_{n\in\NN}\Gamma_c(G,r^*\mathcal{A}_n)$ is dense in $\Gamma_c(G,r^*\mathcal{A})$ with respect to the inductive limit topology and hence also with respect to the reduced norm topology.
		For the proof of $(2)$ we make use of the following general fact:
		If $(B_n,\psi_n)$ is an inductive sequence of $\mathrm{C}^*$-algebras, then so is $(B_n/ker(\psi_n),\widetilde{\psi_n})$, where $\widetilde{\psi}_n$ are the maps induced by $\psi_n$ on the quotients. Then it is easy to check, that all the maps $\widetilde{\psi}_n$ are injective and $B=\lim_n B_n/ker(\psi_n)$.
		Returning to the proof of $(2)$,
		let $I_n=ker(\varphi_n)$. Using the exactness of $G$ now, we see that $I_n\rtimes G$ is precisely the kernel of the map $\varphi_n \rtimes G:A_n\rtimes_r G\rightarrow A\rtimes_r G$.
		By the above remark we have $A=\lim_n A_n/I_n$ and since the connecting maps are all injective we get $A\rtimes_r G=\lim_n (A_n/I_n)\rtimes_r G$ by $(1)$. Using the exactness of $G$ now, we see that $I_n\rtimes G$ is precisely the kernel of the map $\varphi_n \rtimes G:A_n\rtimes_r G\rightarrow A\rtimes_r G$, hence $(A_n/I_n)\rtimes_r G=A_n\rtimes_r G/I_n\rtimes_r G$ and another application of the above mentioned fact together with the identity $I_n\rtimes_r G=ker(\varphi_n\rtimes_r G)$ yields
		$$A\rtimes_r G	=\lim_n A_n/I_n\rtimes_r G=\lim_n A_n\rtimes_r G/I_n\rtimes_r G=\lim_n A_n\rtimes_r G.$$
	\end{proof}
	\begin{satz}\label{Theorem:Continuity of top. K-theory}
		Let $G$ be an ample groupoid and $(A_n,\varphi_n)_n$ an inductive sequence of $G$-algebras. If we let $A=\lim A_n$, then the maps $\psi_{n,*}:\K_*^{\mathrm{top}}(G;A_n)\rightarrow \K_*^{\mathrm{top}}(G;A)$ induced by the canonical maps $\psi_n:A_n\rightarrow A$, give rise to an isomorphism
		$$\lim_{n\rightarrow\infty}\K_*^{\mathrm{top}}(G;A_n)\cong \K_*^{\mathrm{top}}(G;A).$$
	\end{satz}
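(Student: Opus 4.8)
The plan is to realize both sides of the claimed isomorphism as the values $\mathcal{F}^*(G)$ and $\mathcal{G}^*(G)$ of two Going-Down functors and then apply Theorem \ref{Theorem:Going-Down Theorem}. For $H\in\mathcal{S}(G)$ and $C_0(X)\in\mathcal{C}(H)$ I would set
$$\mathcal{G}_H^*(C_0(X))=\KK_*^H(C_0(X),A_{\mid H})$$
as in Example \ref{Example:Going-Down functor}, and define the candidate second functor by
$$\mathcal{F}_H^*(C_0(X))=\lim_n \KK_*^H(C_0(X),(A_n)_{\mid H}).$$
First I would check that $\mathcal{F}$ is genuinely a Going-Down functor: homotopy invariance, the suspension equivalence, and the induction equivalences of Example \ref{Example:Going-Down functor} all pass to the colimit in $n$ coordinate-wise, while half-exactness survives because a filtered colimit of abelian groups is an exact functor and therefore preserves exactness in the middle of the sequences coming from each stage. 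The restricted canonical maps $(\psi_n)_{\mid H}\colon (A_n)_{\mid H}\to A_{\mid H}$ induce, after passing to the colimit, a Going-Down transformation $\Lambda$ from $\mathcal{F}$ to $\mathcal{G}$; its compatibility with suspension and with the induction equivalences $I_H^G$ is inherited from the naturality already present at each finite stage.

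By Theorem \ref{Theorem:Going-Down Theorem} it then suffices to show that $\Lambda_H^*(C(H^{(0)}))$ is an isomorphism for every compact open subgroupoid $H$ of $G$. Here I would invoke the Green--Julg isomorphism for compact groupoids, which identifies $\KK_*^H(C(H^{(0)}),B)$ with $\K_*(B\rtimes H)$ naturally in the $H$-algebra $B$. Under this identification $\Lambda_H^*(C(H^{(0)}))$ becomes the map
$$\lim_n \K_*\big((A_n)_{\mid H}\rtimes H\big)\longrightarrow \K_*\big(A_{\mid H}\rtimes H\big).$$
Restriction to $H^{(0)}$ is a pullback along the inclusion $H^{(0)}\hookrightarrow G^{(0)}$, so Proposition \ref{Prop:LimitsAndPullbacks} gives $A_{\mid H}=\lim_n (A_n)_{\mid H}$ as $H$-algebras. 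Since $H$ is compact it is amenable, hence exact, and so Lemma \ref{Lemma:Proper Groupoids and inductive limits}(2) yields $A_{\mid H}\rtimes H=\lim_n (A_n)_{\mid H}\rtimes H$. Continuity of ordinary $\K$-theory under inductive limits then shows the displayed map is an isomorphism.

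An application of Theorem \ref{Theorem:Going-Down Theorem} now shows that $\Lambda^*(G)\colon\mathcal{F}^*(G)\to\mathcal{G}^*(G)$ is an isomorphism, and it remains to identify the two sides. By definition $\mathcal{G}^*(G)=\lim_{X\subseteq\mathcal{E}(G)}\KK_*^G(C_0(X),A)=\K_*^{\mathrm{top}}(G;A)$, whereas
$$\mathcal{F}^*(G)=\lim_X \lim_n \KK_*^G(C_0(X),A_n)=\lim_n \lim_X \KK_*^G(C_0(X),A_n)=\lim_n \K_*^{\mathrm{top}}(G;A_n),$$
the middle equality being the interchange of two directed colimits. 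Under these identifications $\Lambda^*(G)$ is exactly the map $\lim_n \psi_{n,*}$, which finishes the argument.

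I expect the main obstacle to be the base case for compact open subgroupoids. The delicate point is that $G$ itself is \emph{not} assumed exact, so the passage of reduced crossed products through the inductive limit cannot be justified on $G$ directly; it is precisely the reduction to the compact open subgroupoids $H$, where amenability and hence exactness are automatic, that makes Lemma \ref{Lemma:Proper Groupoids and inductive limits}(2) applicable. Marshalling the Green--Julg identification together with the commutation of both restriction and crossed product with the colimit, all sufficiently naturally in $H$ to respect the induction equivalences, is where the real work lies.
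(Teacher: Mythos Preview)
Your proposal is correct and follows essentially the same route as the paper: both define the Going-Down functors $\mathcal{F}_H^*(C_0(X))=\lim_n\KK_*^H(C_0(X),(A_n)_{\mid H})$ and $\mathcal{G}_H^*(C_0(X))=\KK_*^H(C_0(X),A_{\mid H})$, apply Theorem~\ref{Theorem:Going-Down Theorem}, and verify the base case via Green--Julg together with Lemma~\ref{Lemma:Proper Groupoids and inductive limits} and continuity of $\K$-theory. If anything, you supply a bit more detail than the paper does---the explicit check that $\mathcal{F}$ satisfies the cohomology axioms, the use of Proposition~\ref{Prop:LimitsAndPullbacks} to identify $A_{\mid H}$ with $\lim_n(A_n)_{\mid H}$, and the observation that compactness of $H$ gives exactness so that case~(2) of Lemma~\ref{Lemma:Proper Groupoids and inductive limits} applies---whereas the paper simply cites the lemma without singling out which hypothesis is in force.
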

	\begin{proof}
		Let $\psi^*:\lim_{n\rightarrow\infty}\K_*^{\mathrm{top}}(G;A_n)\rightarrow \K_*^{\mathrm{top}}(G;A)$ be the homomorphism induced by the morphisms $\psi_n: A_n\rightarrow A$. Our aim is to show that $\psi^*$ is an isomorphism. For every proper $G$-space $X$ let $$\psi_X^*: \lim_{n\rightarrow\infty}\mathrm{KK}^G_*(C_0(X),A_n)\rightarrow \mathrm{KK}_*^G(C_0(X),A)$$ be the morphism induced by $\psi_n$ at the level of $X$. Now the structure maps for taking the limit over $X$ are given by left Kasparov products, whereas the structure maps for taking the limit over the $A_n$ is given by right Kasparov products.
		Since the Kasparov product is associative, the limits can be permuted and we get $$\lim_{n\rightarrow\infty}\K_*^{\mathrm{top}}(G;A_n)\cong \lim_X\left( \lim_n \mathrm{KK}^G_*(C_0(X),A_n)\right).$$
		The map $\psi^*$ can then be computed via the maps $\psi_X^*$ by
		$$\lim_X\left( \lim_n \mathrm{KK}^G_*(C_0(X),A_n)\right)\rightarrow \lim_X \mathrm{KK}^G_*(C_0(X),A).$$
		We define a contravariant functor $$\mathcal{F}_H^*(C_0(X)):=\lim_n \mathrm{KK}_*^H(C_0(X),A_{n\mid H}).$$ Then $\mathcal{F}$ is a Going-Down functor. Let $\mathcal{G}$ denote the Going-Down functor $C_0(X)\mapsto \mathrm{KK}^H_*(C_0(X),A_{\mid H})$ from Example \ref{Example:Going-Down functor}. Then the maps $\psi_X$ define a Going-Down transformation $\Psi:\mathcal{F}\rightarrow\mathcal{G}$, such that $\Psi^*(G)=\psi^*$. By Theorem \ref{Theorem:Going-Down Theorem} it is hence enough to prove, that
		$$\lim\limits_n \mathrm{KK}^H_*(C(H^{(0)}),A_{n\mid H})\rightarrow \mathrm{KK}^H_*(C(H^{(0)}),A_{\mid H})$$
		is an isomorphism for all compact open subgroupoids $H$ in $G$.
		For every $n\in\NN$ we have a commutative diagram
		\begin{center}
			\begin{tikzpicture}[description/.style={fill=white,inner sep=2pt}]
			\matrix (m) [matrix of math nodes, row sep=3em,
			column sep=2.5em, text height=1.5ex, text depth=0.25ex]
			{ \mathrm{KK}^H_*(C(H^{(0)}),A_{n\mid H}) & \mathrm{KK}^H_*(C(H^{(0)}),A_{\mid H}) \\
				\K_*(A_{n\mid H}\rtimes H) & \K_*(A_{\mid H}\rtimes H) \\
			};
			\path[->,font=\scriptsize]
			(m-1-1) edge node[auto] {$ (\psi_n)_* $} (m-1-2)
			(m-1-2) edge node[auto] {$ \mu $} (m-2-2)
			(m-2-1) edge node[auto] { $ (\psi_n\rtimes H)_* $ } (m-2-2)
			(m-1-1) edge node[auto] { $ \mu_n $ } (m-2-1)
			;
			\end{tikzpicture},
		\end{center}
		where $\mu$ and $\mu_n$ are the isomorphisms coming from the groupoid version of the Green-Julg theorem (see \cite[Proposition~6.25]{Tu99}). By commutativity of the above diagrams it is hence enough to prove, that the maps $(\psi_n\rtimes H)_*$ induce an isomorphism
		$$\lim\limits_n\K_*(A_{n\mid H}\rtimes H)\rightarrow\K_*(A_{\mid H}\rtimes H).$$
		Using the continuity of $\K$-theory, the result follows from Lemma \ref{Lemma:Proper Groupoids and inductive limits}.
	\end{proof}
	As an immediate consequence, we get the following.
	\begin{kor}\label{Cor:InductiveLimit}
		Let $G$ be an ample groupoid and $(A_n,\varphi_n)_n$ an inductive sequence of $G$-algebras with $A=\lim_{n\rightarrow\infty}A_n$. Suppose $G$ satisfies the Baum-Connes conjecture with coefficients in $A_n$ for all $n\in\NN$. Assume further, that $G$ is exact, or that all the connecting homomorphisms $\varphi_n$ are injective. Then $G$ satisfies the Baum-Connes conjecture with coefficients in $A$.
	\end{kor}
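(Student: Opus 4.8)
The plan is to exhibit the assembly map $\mu_A\colon\K_*^{\mathrm{top}}(G;A)\rightarrow\K_*(A\rtimes_r G)$ as an inductive limit of the assembly maps $\mu_{A_n}$, which are isomorphisms by assumption, and then to identify the source and target of this limit with $\K_*^{\mathrm{top}}(G;A)$ and $\K_*(A\rtimes_r G)$ using the two continuity results established above. Recall that the Baum-Connes conjecture with coefficients in $A$ is exactly the assertion that $\mu_A$ is an isomorphism.

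The first step is to use naturality of the assembly map with respect to the equivariant $*$-homomorphisms $\psi_n\colon A_n\rightarrow A$. For each $n$ this provides a commutative square whose horizontal arrows are $\mu_{A_n}$ and $\mu_A$, whose left vertical arrow is $\psi_{n,*}\colon\K_*^{\mathrm{top}}(G;A_n)\rightarrow\K_*^{\mathrm{top}}(G;A)$, and whose right vertical arrow is $(\psi_n\rtimes_r G)_*\colon\K_*(A_n\rtimes_r G)\rightarrow\K_*(A\rtimes_r G)$. Since these squares are compatible with the connecting maps $\varphi_n$, they constitute a morphism between two inductive systems indexed by $n$, with the $\mu_{A_n}$ as the levelwise maps.

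Next I would pass to the inductive limit over $n$. On the topological side, Theorem \ref{Theorem:Continuity of top. K-theory} tells us that the maps $\psi_{n,*}$ induce an isomorphism $\lim_n\K_*^{\mathrm{top}}(G;A_n)\cong\K_*^{\mathrm{top}}(G;A)$. On the analytic side, the hypothesis that $G$ be exact or that the $\varphi_n$ be injective lets me invoke Lemma \ref{Lemma:Proper Groupoids and inductive limits} to obtain $A\rtimes_r G=\lim_n A_n\rtimes_r G$, and continuity of ordinary $\K$-theory then yields an isomorphism $\lim_n\K_*(A_n\rtimes_r G)\cong\K_*(A\rtimes_r G)$ induced by the $(\psi_n\rtimes_r G)_*$. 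As an inductive limit of isomorphisms, $\lim_n\mu_{A_n}$ is an isomorphism between these limit groups; combined with commutativity of the limiting square, this shows that $\mu_A$ coincides, up to the two continuity isomorphisms, with $\lim_n\mu_{A_n}$, and is therefore itself an isomorphism.

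The step I expect to require the most care is verifying commutativity of the diagram after passing to the limit, i.e.\ that the continuity isomorphism of Theorem \ref{Theorem:Continuity of top. K-theory} intertwines $\mu_A$ with $\lim_n\mu_{A_n}$. This rests on naturality of the assembly map, which is standard, but one must confirm that the isomorphism produced by the Going-Down argument is genuinely the map induced by the canonical morphisms $\psi_n$, rather than some other identification---this is exactly how $\psi^*$ was constructed in the proof of that theorem, so the compatibility is automatic.
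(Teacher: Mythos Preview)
Your proposal is correct and matches the paper's intended argument: the paper gives no detailed proof, merely stating the corollary as ``an immediate consequence'' of Theorem~\ref{Theorem:Continuity of top. K-theory}, and you have spelled out precisely that immediate argument---naturality of the assembly map together with Theorem~\ref{Theorem:Continuity of top. K-theory} on the left and Lemma~\ref{Lemma:Proper Groupoids and inductive limits} plus continuity of $\K$-theory on the right. Your closing remark about the compatibility of the Going-Down isomorphism with the $\psi_n$ is well taken and correctly resolved.
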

	
\section{A Mixed Künneth Formula}\label{Section:MixedKunneth}
In this section we study the $\K$-theory of tensor products by crossed products with ample groupoids in analogy with the results from \cite{CEO}. The main tool is a mixed Künneth formula involving the topological K-theory of the groupoid in question. Under the assumtion that $G$ satisfies the Baum-Connes conjecture with coefficients, one can relate this mixed Künneth formula to the usual Künneth formula for the crossed product.
	Let us recall the usual Künneth formula: We say that a $\mathrm{C}^*$-algebra $A$ satisfies the \textit{Künneth formula} if for all $\mathrm{C}^*$-algebras $B$, there is a canonical short exact sequence
	$$0\longrightarrow \K_*(A)\otimes \K_*(B)\stackrel{\alpha}{\longrightarrow}\K_*(A\otimes B)\stackrel{\beta}{\longrightarrow}\mathrm{Tor}(\K_*(A),\K_*(B))\longrightarrow 0.$$
	The map $\alpha:\K_*(A)\otimes \K_*(B)\rightarrow \K_*(A\otimes B)$ in the above sequence can be obtained using the Kasparov product as the composition
	\begin{center}
		\begin{tikzpicture}[description/.style={fill=white,inner sep=2pt}]
		\matrix (m) [matrix of math nodes, row sep=3em,
		column sep=2em, text height=1.5ex, text depth=0.25ex]
		{ \mathrm{KK}(\CC,A)\otimes \mathrm{KK}_*(\CC,B) &  \mathrm{KK}_*(\CC,A)\otimes \mathrm{KK}_*(A,A\otimes B)\\
			&  \mathrm{KK}_*(\CC,A\otimes B) \\
		};
		\path[->,font=\scriptsize]
		(m-1-1) edge node[auto] {$\id\otimes\sigma_A $} (m-1-2)
		(m-1-2) edge node[auto] {$ \otimes_A $} (m-2-2)
		(m-1-1) edge node[auto] {$ \alpha  $} (m-2-2)
		
		;
		\end{tikzpicture}
	\end{center}
	where $\sigma_A:\KK_*(\CC,B)\rightarrow \KK_*(A,A\otimes B)$ is Kasparov's external tensor product in $\KK$-theory.
	We will also write $\alpha_{A,B}$ instead of $\alpha$ when it is important to remember which $\mathrm{C}^*$-algebras $A$ and $B$ we are considering.
	The following result is shown in \cite[Proposition~4.2]{CEO} (extending earlier results by \cite{MR650021}):
	\begin{prop}
		Let $A$ be a separable $\mathrm{C}^*$-algebra. Then $A$ satisfies the Künneth formula if and only if $\alpha:\K_*(A)\otimes \K_*(B)\rightarrow \K_*(A\otimes B)$ is an isomorphism for all separable $\mathrm{C}^*$-algebras $B$ with $\K_*(B)$ free abelian.
	\end{prop}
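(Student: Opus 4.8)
The plan is to prove the two implications separately, the forward (``only if'') direction being immediate and the reverse direction resting on a geometric free resolution of $\K_*(B)$.

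For the easy direction, suppose $A$ satisfies the Künneth formula and let $B$ be separable with $\K_*(B)$ free abelian. Since a free abelian group is flat over $\ZZ$, the term $\Tor(\K_*(A),\K_*(B))$ vanishes, so the defining short exact sequence collapses to $0\to \K_*(A)\otimes\K_*(B)\stackrel{\alpha}{\longrightarrow}\K_*(A\otimes B)\to 0$, which forces $\alpha$ to be an isomorphism.

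For the converse, assume $\alpha_{A,B'}$ is an isomorphism whenever $B'$ is separable with free $\K$-theory. First I would reduce to separable $B$: every $\mathrm{C}^*$-algebra is the inductive limit of its separable subalgebras, and $\K$-theory, the minimal tensor product $A\otimes-$, and the abelian-group functors $\otimes$ and $\Tor$ all commute with filtered colimits, so the Künneth sequence for all separable $B$ passes to the limit. Fixing a separable $B$, I may after passing to the stably isomorphic $B\otimes\mathcal K$ (which alters neither side of the Künneth sequence) assume $B$ is stable. The heart of the argument is to build a geometric free resolution: choosing countable generating sets for $\K_0(B)$ and $\K_1(B)$, I would represent them by $\ast$-homomorphisms from copies of $\CC$ and of $C_0(\RR)$ into the stable algebra $B$, and assemble these into one $\ast$-homomorphism $\phi\colon N_0\to B$, where $N_0$ is a countable direct sum of copies of $\CC$ and $C_0(\RR)$. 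Then $N_0$ is separable with $\K_*(N_0)$ free abelian and $\phi_*\colon\K_*(N_0)\to\K_*(B)$ is surjective. Forming the mapping cone $C_\phi$ yields a semisplit short exact sequence
$$0\longrightarrow SB\longrightarrow C_\phi\stackrel{j}{\longrightarrow} N_0\longrightarrow 0,\qquad SB:=C_0(\RR)\otimes B,$$
whose six-term sequence identifies the boundary map with $\phi_*$. As $\phi_*$ is onto, $j_*\colon\K_*(C_\phi)\to\K_*(N_0)$ is injective with image $\ker\phi_*$; in particular $\K_*(C_\phi)$, being a subgroup of a free abelian group, is itself free abelian, and we obtain a length-one free resolution $0\to\K_*(C_\phi)\stackrel{j_*}{\longrightarrow}\K_*(N_0)\stackrel{\phi_*}{\longrightarrow}\K_*(B)\to 0$. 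I set $N_1:=C_\phi$.

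Next I would minimal-tensor the above sequence with $A$. Because the mapping cone sequence is semisplit, this preserves exactness even when $A$ is non-nuclear, giving a six-term exact sequence for $0\to A\otimes SB\to A\otimes N_1\stackrel{1\otimes j}{\longrightarrow}A\otimes N_0\to 0$, in which $\K_*(A\otimes SB)\cong\K_{*+1}(A\otimes B)$. By hypothesis $\alpha_{A,N_0}$ and $\alpha_{A,N_1}$ are isomorphisms, and naturality of $\alpha$ in the second variable identifies $(1\otimes j)_*$ with $\mathrm{id}_{\K_*(A)}\otimes j_*$. Since $\K_*(N_0)$ and $\K_*(N_1)$ are free, the cokernel of the latter map is $\K_*(A)\otimes\K_*(B)$ and its kernel is $\Tor(\K_*(A),\K_*(B))$, by the very definition of $\otimes$ and $\Tor$ from the free resolution. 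Feeding these identifications into the six-term sequence extracts exactly $0\to\K_*(A)\otimes\K_*(B)\to\K_*(A\otimes B)\to\Tor(\K_*(A),\K_*(B))\to 0$, and a final diagram chase using the compatibility of $\alpha$ with suspension and with the external product $\sigma_A$ shows that the first map is $\alpha_{A,B}$; hence $A$ satisfies the Künneth formula. The main obstacle is the construction and bookkeeping of the geometric resolution: realizing the generators of $\K_*(B)$ by honest $\ast$-homomorphisms (forcing the passage to $B\otimes\mathcal K$), checking that the mapping cone sequence is semisplit so exactness survives tensoring by a possibly non-nuclear $A$, and tracking the $\ZZ/2\ZZ$-grading through the suspension so the kernel and cokernel land in the correct degrees; the second delicate point is identifying the resulting first map with $\alpha_{A,B}$ itself rather than with a merely abstract isomorphism, which is precisely where naturality of the Kasparov product enters.
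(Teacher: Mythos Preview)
The paper does not give its own proof of this proposition; it simply cites \cite[Proposition~4.2]{CEO} (which in turn extends \cite{MR650021}). Your argument is precisely the standard Rosenberg--Schochet/CEO approach: trivialize the forward direction via flatness, then for the converse build a geometric free resolution of $\K_*(B)$ by mapping a countable direct sum of copies of $\CC$ and $C_0(\RR)$ onto the $\K$-theory of the stabilized $B$, take the mapping cone, and read off the K\"unneth sequence from the resulting semisplit six-term sequence tensored with $A$. So your proposal is correct and matches the proof in the cited reference.

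Two small remarks worth recording. First, your reduction from arbitrary $B$ to separable $B$ via inductive limits of separable subalgebras is fine because the connecting maps are injective, so $A\otimes-$ commutes with the limit even without assuming $A$ exact (cf.\ \cite[II.9.6.6]{MR2188261}, also used later in the paper). Second, the semisplitting of the mapping cone sequence is automatic here since $N_0$ is nuclear (a $c_0$-sum of commutative algebras), so the Choi--Effros lifting theorem applies; alternatively one can write down the explicit contractive cp section $a\mapsto (t\mapsto t\,\phi(a),a)$. Both points are exactly the kind of bookkeeping you flagged, and they go through without difficulty.
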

	The authors in \cite{CEO} define the class $\mathcal{N}$ to be the class of all separable $\mathrm{C}^*$-algebras such that $\alpha:\K_*(A)\otimes \K_*(B)\rightarrow \K_*(A\otimes B)$ is an isomorphism for all separable $\mathrm{C}^*$-algebras $B$ with $\K_*(B)$ free abelian. It turns out that the class $\mathcal{N}$ is quite large and enjoys many nice permanence properties:
	\begin{enumerate}
		\item The class $\mathcal{N}$ contains the bootstrap class $\mathcal{B}$ (see \cite[Definition~22.3.4]{MR1656031}).
		\item If $A\in\mathcal{N}$ and $B$ is $\mathrm{KK}$-dominated by $A$ (see \cite[Definition~23.10.6]{MR1656031}), then $B\in\mathcal{N}$.
		\item If $0\rightarrow I\rightarrow A\rightarrow A/I\rightarrow 0$ is a semi-split short exact sequence of $\mathrm{C}^*$-algebras such that two of them are in $\mathcal{N}$, then so is the third.
		\item If $A,B\in\mathcal{N}$, then $A\otimes B\in\mathcal{N}$.
		\item If $A=\lim_i A_i$ is an inductive limit such that each $A_i\in\mathcal{N}$ and, such that all the structure maps are injective, then $A\in\mathcal{N}$.
	\end{enumerate}
	
	Our first goal is to replace $\K_*(A)$ by the topological $\K$-theory of a groupoid $G$ with coefficients in a suitable separable $G$-algebra $A$ and define an equivariant version of the map $\alpha$.
	Before we can get into it, we need some preliminary observations on minimal tensor products of $C_0(X)$-algebras:
	
	Recall, that for arbitrary $\mathrm{C}^*$-algebras $A$ and $B$, their minimal tensor product $A\otimes B$ sits as an essential ideal inside $M(A)\otimes M(B)$, and hence, using the universal property of the multiplier algebra, there exists a unique embedding $\iota: M(A)\otimes M(B)\hookrightarrow M(A\otimes B)$, satisfying $\iota(m\otimes n)(a\otimes b)=ma\otimes nb$ and $(a\otimes b)\iota(m\otimes n)=am\otimes bn$. In particular, we have $\iota(ZM(A)\otimes ZM(B))\subseteq ZM(A\otimes B)$. In what follows we will suppress $\iota$ in our notation and view $ZM(A)\otimes ZM(B)$ as a subalgebra of $ZM(A\otimes B)$:
	\begin{prop}\cite[Proposition~3.4]{MR3549520}
		Let $A$ be a $C_0(X)$-algebra with structure map $\Phi_X$ and $B$ a $C_0(Y)$-algebra with structure map $\Phi_Y$. Then $A\otimes B$ is a $C_0(X\times Y)$-algebra with respect to the map $\Phi_X\otimes \Phi_Y$. Moreover, the fibre over $(x,y)\in X\times Y$ is $$(A\otimes B)_{(x,y)}=(A\otimes B)/I_x\otimes B+A\otimes J_y,$$ where $I_x$ and $J_y$ are the ideals corresponding to the fibres $A_x$ and $B_y$ respectively.
	\end{prop}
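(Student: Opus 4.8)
The plan is to verify the two assertions in turn. For the $C_0(X\times Y)$-algebra structure, recall that $C_0(X\times Y)\cong C_0(X)\otimes C_0(Y)$. Since $\Phi_X$ and $\Phi_Y$ take values in $ZM(A)$ and $ZM(B)$, the homomorphism $\Phi_X\otimes\Phi_Y$ maps $C_0(X\times Y)$ into $ZM(A)\otimes ZM(B)$, which by the observation recorded just before the statement we view, via $\iota$, as a subalgebra of $ZM(A\otimes B)$. It then remains only to check non-degeneracy. On elementary tensors one has $(f\otimes g)\cdot(a\otimes b)=\Phi_X(f)a\otimes\Phi_Y(g)b$, and since $\overline{\Phi_X(C_0(X))A}=A$ and $\overline{\Phi_Y(C_0(Y))B}=B$, every elementary tensor $a\otimes b$ is approximated by elements of this form; as their finite sums are dense in $A\otimes B$, the map is non-degenerate and $A\otimes B$ becomes a $C_0(X\times Y)$-algebra.

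For the fibre, write $X_0=X\setminus\{x\}$ and $Y_0=Y\setminus\{y\}$, so that the fibre ideal is $K_{(x,y)}=\overline{C_0((X\times Y)\setminus\{(x,y)\})\cdot(A\otimes B)}$. The first ingredient is the purely commutative identity
$$C_0((X\times Y)\setminus\{(x,y)\})=C_0(X_0)\otimes C_0(Y)+C_0(X)\otimes C_0(Y_0),$$
which follows from the correspondence between closed ideals of a commutative $C^*$-algebra and closed subsets of its spectrum: the sum of the ideals associated with the closed sets $\{x\}\times Y$ and $X\times\{y\}$ is the ideal associated with their intersection $\{(x,y)\}$.

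The second ingredient is to compute the two summands. Using the action formula on elementary tensors together with $I_x=\overline{C_0(X_0)A}$, $J_y=\overline{C_0(Y_0)B}$ and the non-degeneracy identities $\overline{C_0(X)A}=A$, $\overline{C_0(Y)B}=B$, one identifies
$$\overline{(C_0(X_0)\otimes C_0(Y))(A\otimes B)}=I_x\otimes B,\qquad\overline{(C_0(X)\otimes C_0(Y_0))(A\otimes B)}=A\otimes J_y,$$
where I use that, for the minimal tensor product, $I_x\otimes B$ and $A\otimes J_y$ embed as closed ideals of $A\otimes B$ and are recovered as the closed linear spans of the relevant elementary tensors. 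Combining this with the commutative identity and using that the sum of two closed ideals in a $C^*$-algebra is again a closed ideal, one obtains $K_{(x,y)}=I_x\otimes B+A\otimes J_y$, as claimed.

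The main obstacle I anticipate is the second ingredient: one must justify carefully that $\overline{(C_0(X_0)\otimes C_0(Y))(A\otimes B)}$ equals exactly $I_x\otimes B$. This rests on non-degeneracy of $\Phi_Y$ to fill up the entire factor $B$, and on the faithfulness of the minimal tensor product, which ensures that $I_x\otimes B$ is genuinely the closed ideal of $A\otimes B$ generated by $I_x$ rather than a proper sub- or super-object. Once these points are secured, the remaining manipulations with closures and sums of ideals are routine.
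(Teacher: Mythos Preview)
The paper does not prove this proposition; it simply quotes it from \cite[Proposition~3.4]{MR3549520}. So there is no in-paper argument to compare against. Your proposal is a correct direct verification: the first part uses the canonical embedding $ZM(A)\otimes ZM(B)\hookrightarrow ZM(A\otimes B)$ together with non-degeneracy on elementary tensors, and the second part combines the ideal identity $C_0(X_0)\otimes C_0(Y)+C_0(X)\otimes C_0(Y_0)=C_0\bigl((X\times Y)\setminus\{(x,y)\}\bigr)$ with the identifications $\overline{(C_0(X_0)\otimes C_0(Y))(A\otimes B)}=I_x\otimes B$ and $\overline{(C_0(X)\otimes C_0(Y_0))(A\otimes B)}=A\otimes J_y$ and the fact that a sum of two closed ideals in a $\mathrm{C}^*$-algebra is closed. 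The only point worth making explicit in a final write-up is the one you already flag: that for the minimal tensor product the inclusion $I_x\hookrightarrow A$ induces an isometric embedding $I_x\otimes B\hookrightarrow A\otimes B$ onto a closed ideal, so the closed linear span of the elementary tensors really is $I_x\otimes B$.
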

	In many situations the fibres are much nicer to describe:
	\begin{prop}\label{Prop:FibresMinimalTensorProduct}
		Let $A$ be a $C_0(X)$-algebra, and $B$ be a $C_0(Y)$-algebra. If either $A$ or $B$ is separable and exact, then $$(A\otimes B)_{(x,y)}=A_x\otimes B_y.$$
	\end{prop}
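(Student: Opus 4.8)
The plan is to build on the preceding Proposition, which already identifies the fibre as
$$(A\otimes B)_{(x,y)}=(A\otimes B)/(I_x\otimes B+A\otimes J_y).$$
Since the minimal tensor product is symmetric and both the asserted formula and the standing hypothesis are invariant under interchanging the two factors (together with $(X,x)$ and $(Y,y)$), I may assume without loss of generality that $B$ is separable and exact. It then remains to identify the quotient on the right with $A_x\otimes B_y$, and I would carry this out by performing the two quotients successively, both steps being governed by the exactness of $B$. Throughout, $q_x\colon A\to A_x$ and $q_y\colon B\to B_y$ denote the quotient maps.

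For the first quotient, exactness of $B$ applied to the short exact sequence $0\to I_x\to A\to A_x\to 0$ (tensoring by the fixed factor $B$) gives an exact sequence
$$0\longrightarrow I_x\otimes B\longrightarrow A\otimes B\xrightarrow{\ q_x\otimes\id\ }A_x\otimes B\longrightarrow 0,$$
so that $(A\otimes B)/(I_x\otimes B)\cong A_x\otimes B$. The image of the ideal $A\otimes J_y$ under $q_x\otimes\id$ is the closed ideal $A_x\otimes J_y$ (a $\ast$-homomorphism has closed range), and since the sum of two closed ideals in a $\mathrm{C}^*$-algebra is closed, we obtain
$$(A\otimes B)/(I_x\otimes B+A\otimes J_y)\cong(A_x\otimes B)/(A_x\otimes J_y).$$
It remains to see that this last quotient equals $A_x\otimes B_y$, that is, that the sequence
$$0\longrightarrow A_x\otimes J_y\longrightarrow A_x\otimes B\xrightarrow{\ \id\otimes q_y\ }A_x\otimes B_y\longrightarrow 0$$
is exact. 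This is the crux and the main obstacle: here we are tensoring a quotient sequence of $B$ with the fixed factor $A_x$, whereas the definition of exactness of $B$ only controls quotients of the \emph{other} tensor factor; since the fibre $A_x$ need not be exact, the standard criterion does not apply directly.

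To clear this obstacle I would establish the following property, which is precisely what is missing: if $B$ is separable and exact, then for every closed ideal $J$ of $B$ and every $\mathrm{C}^*$-algebra $C$, the sequence $0\to C\otimes J\to C\otimes B\to C\otimes(B/J)\to 0$ is exact. Applying this with $J=J_y$ and $C=A_x$ finishes the second step, and hence the proof. For the property itself I would invoke Kirchberg's theorem that a separable exact $\mathrm{C}^*$-algebra embeds into a nuclear $\mathrm{C}^*$-algebra $N$, which yields an inclusion $C\otimes B\hookrightarrow C\otimes N$. The kernel of $\id\otimes q_J$ consists exactly of those $x\in C\otimes B$ whose slices $(\omega\otimes\id)(x)$ lie in $J$ for every bounded linear functional $\omega$ on $C$, and it plainly contains $C\otimes J$. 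Viewing such an $x$ inside $C\otimes N$, its slices still lie in $J\subseteq N$; since a nuclear $\mathrm{C}^*$-algebra enjoys the slice map property for arbitrary closed subspaces, the set of such elements of $C\otimes N$ is precisely $C\otimes J$, so $x\in C\otimes J$ already in $C\otimes B$. This gives the reverse inclusion and proves the property. Thus the only genuinely nontrivial ingredient is this nuclear-embedding and slice-map input; the remainder is bookkeeping with quotients together with the preceding Proposition.
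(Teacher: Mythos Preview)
Your reduction is sound and more explicit than the paper's own proof, which consists entirely of the sentence ``This is a direct consequence of \cite[IV.3.4.22, Proposition~IV.3.4.23]{MR2188261}.'' Your two successive quotients are exactly the content of Blackadar's IV.3.4.23, and the lemma you isolate at the end --- that for separable exact $B$ and any ideal $J\lhd B$ the sequence $0\to C\otimes J\to C\otimes B\to C\otimes(B/J)\to 0$ is exact for \emph{arbitrary} $C$ --- is exactly IV.3.4.22. So you have correctly located the one nontrivial input.

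There is, however, a genuine issue with your sketch of that lemma. The slice--map property you invoke has the wrong variance. Kirchberg's slice--map characterisation of exactness (equivalently Wassermann's Property~S) says: a $\mathrm{C}^*$-algebra $E$ is exact iff for every $F$ and every $\mathrm{C}^*$-subalgebra $D\subseteq F$, functionals on $E$ detect membership in $D\otimes E\subseteq F\otimes E$. In your situation you are slicing with functionals on $C$ to detect membership in $C\otimes J\subseteq C\otimes N$; after untangling the sides, this is Property~S for $C$, not for $N$. Nuclearity of $N$ does not obviously help here --- running the CPAP of $N$ through $\id_C\otimes\varphi_\lambda\psi_\lambda$ lands you in $C\odot\varphi_\lambda\psi_\lambda(J)$, and $\varphi_\lambda\psi_\lambda(J)\not\subseteq J$ in general --- and $C=A_x$ need not be exact, as you yourself note. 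So the sentence ``a nuclear $\mathrm{C}^*$-algebra enjoys the slice map property for arbitrary closed subspaces'' does not do the work you need.

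The lemma is nonetheless true; the standard route is Kirchberg's theorem that separable exact $\mathrm{C}^*$-algebras are locally reflexive, from which Property~C$'$ (your lemma) follows directly --- see e.g.\ Brown--Ozawa, Proposition~3.7.8 and Corollary~9.4.6, or indeed Blackadar IV.3.4.22. So either cite that result in place of your slice--map paragraph, or replace the nuclear--embedding argument by the local--reflexivity argument.
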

	\begin{proof}
		This is a direct consequence of \cite[IV.3.4.22, Proposition~IV.3.4.23]{MR2188261}.
	\end{proof}
	Now let $A$ and $B$ be $C_0(X)$-algebras over the same space $X$, and let $\Delta:X\rightarrow X\times X$ be the diagonal inclusion. Then we define the minimal balanced tensor product $A\otimes_X B$ of $A$ and $B$ by $\Delta^*(A\otimes B)$. Thus, $A\otimes_X B$ is a $C_0(X)$-algebra by construction. Note, that $A\otimes_X B$ is canonically isomorphic the quotient of $A\otimes B$ by the ideal $\overline{C_0(X\times X\setminus \Delta(X))A\otimes B}$.
	It follows from Proposition \ref{Prop:FibresMinimalTensorProduct} above, that if either $A$ or $B$ is separable and exact, that for all $x\in X$ we have
	$$(A\otimes_X B)_x=A_x\otimes B_x.$$
	With this description of the fibres it is not so hard to see the following:
	\begin{lemma}
		Let $A$ and $B$ be $C_0(X)$-algebras and $f:Y\rightarrow X$ a continuous map. If either $A$ or $B$ is separable and exact, we have $f^*(A\otimes_X B)\cong f^*A\otimes_Y f^*B$.
	\end{lemma}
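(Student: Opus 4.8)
The plan is to reduce the statement, via functoriality of the pullback, to the product identity $(f\times f)^*(A\otimes B)\cong f^*A\otimes f^*B$ over $Y\times Y$, and then to verify the latter fibrewise. First I would exploit the commuting square of continuous maps $\Delta_X\circ f=(f\times f)\circ\Delta_Y\colon Y\to X\times X$, where $\Delta_X,\Delta_Y$ are the diagonal inclusions. Applying the functoriality of the pullback construction to the $C_0(X\times X)$-algebra $A\otimes B$ along the two factorizations gives canonical $C_0(Y)$-linear isomorphisms
\begin{align*}
f^*(A\otimes_X B)&=f^*\big(\Delta_X^*(A\otimes B)\big)\cong\big(\Delta_X\circ f\big)^*(A\otimes B)\\
&=\big((f\times f)\circ\Delta_Y\big)^*(A\otimes B)\cong\Delta_Y^*\big((f\times f)^*(A\otimes B)\big).
\end{align*}
Since by definition $f^*A\otimes_Y f^*B=\Delta_Y^*(f^*A\otimes f^*B)$, it then suffices to produce a $C_0(Y\times Y)$-linear isomorphism $(f\times f)^*(A\otimes B)\cong f^*A\otimes f^*B$ and pull it back along $\Delta_Y$, using Lemma \ref{Lem:PullbackOfHomomorphisms} together with the fact that the pullback of an isomorphism is again an isomorphism.

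To establish this product identity I would construct the obvious comparison map $\Theta$ in the bundle picture: for sections $\xi\in f^*A$ and $\eta\in f^*B$, let $\Theta(\xi\otimes\eta)$ be the section $(y,y')\mapsto\xi(y)\otimes\eta(y')$, which takes values in $A_{f(y)}\otimes B_{f(y')}=(A\otimes B)_{(f(y),f(y'))}$ by Proposition \ref{Prop:FibresMinimalTensorProduct} and hence defines an element of $(f\times f)^*(A\otimes B)$. On elementary tensors this is plainly a $C_0(Y\times Y)$-linear $*$-homomorphism. To conclude that $\Theta$ is an isomorphism I would apply Lemma \ref{Lem:IsomorphismCriteriumForC(X)-linearHomomorphisms}: the fibre of the target at $(y,y')$ is $A_{f(y)}\otimes B_{f(y')}$ as just noted, while the fibre of the source $f^*A\otimes f^*B$ at $(y,y')$ is $(f^*A)_y\otimes(f^*B)_{y'}=A_{f(y)}\otimes B_{f(y')}$, again by Proposition \ref{Prop:FibresMinimalTensorProduct}; under these identifications $\Theta_{(y,y')}$ is the identity, so $\Theta$ is a fibrewise isomorphism and therefore an isomorphism.

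The main obstacle is making sure $\Theta$ is genuinely well defined as a $*$-homomorphism on the \emph{minimal} tensor product $f^*A\otimes f^*B$: one must check that $(y,y')\mapsto\xi(y)\otimes\eta(y')$ is a norm-continuous section vanishing at infinity (which follows from continuity of $\xi,\eta$ and the upper-semicontinuous bundle structure underlying the $C_0(X\times X)$-algebra $A\otimes B$), and that $\Theta$ is bounded for the minimal norm. This is precisely where the exactness hypothesis is indispensable: it is what makes Proposition \ref{Prop:FibresMinimalTensorProduct} applicable, both to identify the fibres of $A\otimes B$ and, after observing that $f^*A$ (respectively $f^*B$) inherits exactness from $A$ (respectively $B$)---for instance because $f^*A$ is a quotient of the exact algebra $C_0(Y)\otimes A$---as well as separability whenever $Y$ is second countable, to identify the fibres of $f^*A\otimes f^*B$. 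Once the fibres are correctly identified, the fibrewise isomorphism criterion of Lemma \ref{Lem:IsomorphismCriteriumForC(X)-linearHomomorphisms} bypasses any direct estimate of the minimal norm and completes the argument.
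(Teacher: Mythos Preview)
Your proposal follows essentially the same route as the paper: reduce via the diagonal identity $\Delta_X\circ f=(f\times f)\circ\Delta_Y$ to the product statement $(f\times f)^*(A\otimes B)\cong f^*A\otimes f^*B$, construct the obvious comparison map on elementary tensors, and conclude by the fibrewise isomorphism criterion using Proposition~\ref{Prop:FibresMinimalTensorProduct}. The order of presentation is reversed (the paper first proves the product identity, then applies the diagonal reduction), but the content is the same.

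There is one point where your argument is weaker than the paper's. You correctly flag that the main obstacle is extending $\Theta$ from the algebraic tensor product to the \emph{minimal} tensor product, but your claim that ``the fibrewise isomorphism criterion \dots\ bypasses any direct estimate of the minimal norm'' is not quite right: Lemma~\ref{Lem:IsomorphismCriteriumForC(X)-linearHomomorphisms} applies only to an already existing $C_0(Y\times Y)$-linear $\ast$-homomorphism, so boundedness must be established \emph{before} invoking it. The paper handles this directly: for any point $(y,y')$ the evaluation $\Theta(\,\cdot\,)(y,y')$ factors as $\mathrm{ev}_y\otimes\mathrm{ev}_{y'}$, which is contractive on the minimal tensor product by functoriality; taking the supremum over $(y,y')$ gives $\lVert\Theta(x)\rVert\leq\lVert x\rVert_{\min}$. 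Once this elementary estimate is in place, your fibrewise argument (and the paper's) finishes the proof. Your remark that $f^*A$ inherits exactness from $A$ as a quotient of $C_0(Y)\otimes A$ is a nice addition that the paper leaves implicit.
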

	\begin{proof}
		Consider the map $f\times f:Y\times Y\rightarrow X\times X$. We will first show, that $f^*A\otimes f^*B$ is canonically isomorphic to $(f\times f)^*(A\otimes B)$ as a $C_0(Y\times Y)$-algebra. Consider the map
		$$\Phi:f^*A\otimes f^*B\rightarrow (f\times f)^*(A\otimes B),$$ 
		which on an elementary tensor $\varphi\otimes \psi\in f^*A\otimes f^*B$ is defined by
		$$\Phi(\varphi\otimes \psi)(y,y')=\varphi(y)\otimes\psi(y')
		\in A_{f(y)}\otimes B_{f(y')}=(f\times f)^*(A\otimes B)_{(y,y')}.$$
		Note, that we use the assumption that either $A$ or $B$ is separable and exact here, to identify the fibres in the last equality. Since
		\begin{align*}\norm{\Phi(\varphi\otimes\psi)}&=\sup\limits_{(y,y')} \norm{\varphi(y)\otimes\psi(y')}\\
		&=\sup\limits_{(y,y')}\norm{\varphi(y)}\norm{\psi(y')}\\
		&\leq\norm{\varphi}\norm{\psi}\\
		&=\norm{\varphi\otimes\psi},
		\end{align*}
		the map $\Phi$ extends to a bounded $C_0(Y\times Y)$-linear $\ast$-homomorphism, which clearly induces an isomorphism on each fibre. Hence $\Phi$ is an isomorphism as desired.
		Observe, that we have $\Delta_X\circ f=(f\times f)\circ \Delta_Y$, where $\Delta_X$ and $\Delta_Y$ denote the diagonal inclusions respectively. Hence we have
		\begin{align*}
		f^*(A\otimes_X B)=(\Delta_X\circ f)^*(A\otimes B)&=((f\times f)\circ \Delta_Y)^*(A\otimes B)\\
		&=\Delta_Y^*((f\times f)^*(A\otimes B))\\
		& \cong\Delta_Y^*(f^*A\otimes f^*B)\\
		&=f^*A\otimes_Y f^*B.
		\end{align*}
	\end{proof}
	Suppose now, that $G$ is a locally compact Hausdorff groupoid with a Haar system. Suppose further, that $(A,G,\alpha)$ and $(B,G,\beta)$ are groupoid dynamical systems. With the above lemma at hand, it is now easy to define a diagonal action. Suppose that either $A$ or $B$ is separable and exact. Then we define the diagonal action of $G$ on $A\otimes_{G^{(0)}} B$ via the composition
	$$d^*(A\otimes_{G^{(0)}} B)\cong d^*A\otimes_G d^*B\stackrel{\alpha\otimes \beta}{\longrightarrow}r^*A\otimes_G r^*B\cong r^*(A\otimes_{G^{(0)}} B).$$
	Note, that if $(A,G,\alpha)$ is a groupoid dynamical system and $B$ is any $\mathrm{C}^*$-algebra, such that either $A$ or $B$ is separable and exact, then $(A\otimes B,G,\alpha\otimes id)$ is a groupoid dynamical system. The reduced crossed product is compatible with the minimal balanced tensor product in the following way:
	\begin{prop}\cite[Theorem~6.1]{MR3383622}
		There is a natural isomorphism
		$$\Psi:(A\otimes B)\rtimes_{\alpha\otimes id,r} G\rightarrow (A\rtimes_{\alpha,r}G)\otimes B.$$
	\end{prop}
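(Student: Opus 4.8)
The plan is to construct the isomorphism $\Psi$ directly at the level of the dense convolution $\ast$-algebras and then verify it extends to an isometry for the reduced norms. On the domain side we have the completion of $\Gamma_c(G, r^*(\mathcal{A}\otimes_{G^{(0)}} B))$, which, using the fibrewise identification $(A\otimes B)_x = A_x\otimes B_y$ from Proposition~\ref{Prop:FibresMinimalTensorProduct} (valid since either $A$ or $B$ is separable and exact), we can identify with sections taking values in $A_{r(g)}\otimes B$. On the target side we have the completion of $\Gamma_c(G, r^*\mathcal{A})\odot B$, the algebraic tensor product, sitting densely inside $(A\rtimes_{\alpha,r} G)\otimes B$. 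First I would define $\Psi$ on an elementary section $f\otimes b$, where $f\in\Gamma_c(G,r^*\mathcal{A})$ and $b\in B$, by the obvious rule $\Psi(f\otimes b)(g) = f(g)\otimes b$, and extend by linearity.

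Next I would check that $\Psi$ is a $\ast$-homomorphism. Because the action $\beta=\mathrm{id}$ on the $B$-factor is trivial, the twisting terms $\alpha_h$ in the convolution product and the involution formula $f^*(g)=\alpha_g(f(g^{-1})^*)$ act only on the $\mathcal{A}$-factor, so comparing
$$(f_1\otimes b_1)\ast(f_2\otimes b_2)(g)=\sum_{h\in G^{r(g)}} (\alpha\otimes\mathrm{id})_h\big((f_1\otimes b_1)(h)\,(f_2\otimes b_2)(h^{-1}g)\big)$$
with the convolution in $\Gamma_c(G,r^*\mathcal{A})$ tensored with multiplication in $B$ is a direct calculation that respects $\Psi$; the involutions match for the same reason. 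This is routine and I would not grind through it in full.

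The essential point is that $\Psi$ is isometric for the reduced norms, and I expect this to be the main obstacle. The reduced norm on the domain is computed via the representations $\pi_u$ on $\ell^2(G^u,(A\otimes B)_u) = \ell^2(G^u, A_u\otimes B)$, while the reduced norm on the target is the minimal tensor norm of $(A\rtimes_{\alpha,r} G)\otimes B$, computed via the defining representations of $A\rtimes_{\alpha,r} G$ tensored with a faithful representation of $B$. The key structural observation is the canonical unitary identification $\ell^2(G^u, A_u\otimes B)\cong \ell^2(G^u,A_u)\otimes B$ of Hilbert modules, under which the representation $\pi_u^{A\otimes B}$ of the crossed product intertwines with $\pi_u^A\otimes\mathrm{id}_B$, precisely because the $B$-factor is acted on trivially. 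From this one reads off that $\norm{\Psi(\cdot)}$ computed on the target agrees with the supremum over $u\in G^{(0)}$ of the fibrewise norms, which is exactly the reduced norm on the domain; the subtlety is that one must invoke that the minimal tensor norm is realised by amplifying the reduced-crossed-product representations against a faithful representation of $B$, so that no extra collapsing occurs. I would therefore invoke exactness/separability once more to ensure the fibrewise tensor-product norms assemble correctly, conclude that $\Psi$ extends to an isometric $\ast$-isomorphism onto a dense subalgebra, and hence to the claimed isomorphism $\Psi:(A\otimes B)\rtimes_{\alpha\otimes\mathrm{id},r} G\to (A\rtimes_{\alpha,r}G)\otimes B$. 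Naturality in both variables then follows because $\Psi$ is defined by an elementary-tensor formula that visibly commutes with equivariant $\ast$-homomorphisms in the $A$-variable and with $\ast$-homomorphisms in the $B$-variable.
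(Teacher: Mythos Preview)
The paper does not prove this proposition at all: it is stated with a citation to \cite[Theorem~6.1]{MR3383622} and no argument is given. So there is no ``paper's own proof'' to compare against; you have supplied a proof where the authors simply invoke the literature.

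That said, your outline follows the standard route one would take and is essentially correct. Defining $\Psi$ on elementary tensors, checking the $\ast$-algebra structure using triviality of the action on $B$, and then matching the reduced norms via the Hilbert-module identification $\ell^2(G^u,A_u\otimes B)\cong \ell^2(G^u,A_u)\otimes B$ is exactly how the cited reference proceeds. One small point to tighten: the representations $\pi_u$ land in adjointable operators on Hilbert $A_u$-modules, not on Hilbert spaces, so when you say ``the minimal tensor norm is realised by amplifying the reduced-crossed-product representations against a faithful representation of $B$'' you should make explicit that you first pass to a faithful Hilbert-space representation of $A\rtimes_{\alpha,r}G$ (e.g.\ via faithful representations of the fibres $A_u$), or else argue directly with the external tensor product of Hilbert modules and the characterisation of the minimal $C^*$-tensor norm in that setting. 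Your invocation of exactness here is not really needed for the norm comparison itself; exactness was used earlier only to identify the fibres $(A\otimes B)_x$.
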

	Before we can proceed, we also need the following:
	\begin{prop}
		Let $A,B$ and $D$ be separable $G$-algebras, such that $D$ is exact. Then there is a homomorphism
		$$\sigma_D:\KK^G(A,B)\rightarrow \KK^G(A\otimes_{G^{(0)}}D,B\otimes_{G^{(0)}}D),$$
		given by associating to an element $(E,\Phi,T)\in\mathbb{E}^G(A,B)$ the triple $(E\otimes_A A\otimes_{G^{(0)}} D, \Phi\otimes \id,T\otimes\id)$.
	\end{prop}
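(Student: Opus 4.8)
The plan is to follow the classical construction of the Kasparov operation $\sigma_D$ (tensoring a cycle with the identity class), carried out here in Le Gall's groupoid-equivariant theory and with the balanced tensor product $\otimes_{G^{(0)}}$ in place of the spatial one. Conceptually $\sigma_D$ is nothing but the exterior product $\,\cdot\otimes_{G^{(0)}}1_D$ with the unit $1_D\in\KK^G(D,D)$ (represented by $(D,\id,0)$), but since the statement records an explicit formula on cycles I would verify directly that the assigned triple is a Kasparov cycle and that the assignment respects the relations defining $\KK^G$.

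First I would check that $\tilde E:=E\otimes_A A\otimes_{G^{(0)}}D$ is a $G$-equivariant Hilbert $B\otimes_{G^{(0)}}D$-module. As a Hilbert module it is the balanced exterior tensor product of $E$ with $D$, and the only point needing care is the $G$-action. Here I would mimic the construction of the diagonal action given just before the statement: starting from the $C_0(G)$-linear isomorphism $d^*E\to r^*E$ implementing the action on $E$ and the corresponding datum for $D$, and using the identification $f^*(\cdot\otimes_X\cdot)\cong f^*(\cdot)\otimes_Y f^*(\cdot)$ for $f=d,r$ from the lemma on pullbacks of balanced tensor products, one obtains a $C_0(G)$-linear isomorphism $d^*\tilde E\to r^*\tilde E$ implementing the diagonal action. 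Exactness of $D$ enters precisely here: it is what guarantees, through Proposition \ref{Prop:FibresMinimalTensorProduct}, that the fibres satisfy $\tilde E_x=E_x\otimes D_x$, so that the action can be controlled and checked fibrewise.

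Next I would treat the representation and the operator. The map $\Phi\otimes\id$ is first defined algebraically and then has to be shown to extend to a $G$-equivariant $\ast$-homomorphism $A\otimes_{G^{(0)}}D\to\mathcal L(\tilde E)$; since the balanced tensor product is a quotient of the minimal one, the crucial step is again fibrewise: by Lemma \ref{Lem:IsomorphismCriteriumForC(X)-linearHomomorphisms} it suffices that $(\Phi\otimes\id)_x=\Phi_x\otimes\id_{D_x}$ be well defined, which holds because $D$ is exact. Setting $\tilde T:=T\otimes\id$, I would then verify the three Kasparov conditions, namely that $[\tilde T,(\Phi\otimes\id)(x)]$, $(\Phi\otimes\id)(x)(\tilde T^2-1)$ and $(\Phi\otimes\id)(x)(\tilde T-\tilde T^*)$ lie in $\mathcal K(\tilde E)$, together with the almost-invariance of $\tilde T$ under $G$. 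It is enough to test these on elementary tensors $a\otimes d$ and extend by linearity and continuity; each then reduces to the corresponding property of $(E,\Phi,T)$ via the inclusion $\mathcal K(E)\otimes_{G^{(0)}}D\subseteq\mathcal K(\tilde E)$.

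Finally, to see that $\sigma_D$ descends to a homomorphism $\KK^G(A,B)\to\KK^G(A\otimes_{G^{(0)}}D,B\otimes_{G^{(0)}}D)$, I would observe that the whole construction is functorial in the cycle: it sends degenerate cycles to degenerate cycles and unitarily equivalent cycles to unitarily equivalent ones, and it sends a homotopy (a cycle with coefficients in $B\otimes_{G^{(0)}}D$ over the interval) to a homotopy, using $(B[0,1])\otimes_{G^{(0)}}D\cong(B\otimes_{G^{(0)}}D)[0,1]$; additivity follows from $(E_1\oplus E_2)\otimes_{G^{(0)}}D\cong(E_1\otimes_{G^{(0)}}D)\oplus(E_2\otimes_{G^{(0)}}D)$. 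The hard part is not the formal bookkeeping but the well-definedness of the balanced constructions at the level of the minimal tensor product: everything hinges on the fibrewise splitting $(\,\cdot\otimes_{G^{(0)}}D)_x=(\cdot)_x\otimes D_x$, and it is exactly the separability and exactness of $D$ that, via Proposition \ref{Prop:FibresMinimalTensorProduct}, make this splitting, and hence the diagonal action, the representation $\Phi\otimes\id$ and the compactness estimates, available.
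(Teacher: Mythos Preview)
The paper states this proposition without proof; it is treated as a standard fact from Le Gall's groupoid-equivariant $\KK$-theory (the external tensor product map, cf.\ \cite{LeGall}). Your outline is a correct and complete account of the routine verification, correctly identifying that the only nontrivial input is the fibrewise identification $(\cdot\otimes_{G^{(0)}}D)_x\cong(\cdot)_x\otimes D_x$ supplied by exactness of $D$ via Proposition~\ref{Prop:FibresMinimalTensorProduct}.
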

	
	Let us now return to the Künneth formula. Fix a second countable locally compact Hausdorff groupoid $G$. For ease of notation let us denote its unit space by $X$.
	Let $A$ be a separable exact $G$-algebra and $B$ any $\mathrm{C}^*$-algebra. We wish to define a map
	$$\alpha_G:\K^{\mathrm{top}}_*(G;A) \otimes \K_*(B)\rightarrow \K_*^{\mathrm{top}}(G;A\otimes B).$$
	Consider the trivial group denoted by $1$. Then the canonical groupoid homomorphism $G\rightarrow 1$ induces a homomorphism
	$$\KK_*(\CC,B)\rightarrow \KK^G_*(C_0(X),C_0(X,B)).$$
	Let $\varepsilon$ denote the composition:
	\begin{center}
		\begin{tikzpicture}[description/.style={fill=white,inner sep=2pt}]
		\matrix (m) [matrix of math nodes, row sep=3em,
		column sep=2.5em, text height=1.5ex, text depth=0.25ex]
		{ \K_*(B) & \KK_*(\CC,B) &  \KK^G_*(C_0(X),C_0(X,B))\\
			& &  \KK_*^G(A\otimes_{X}C_0(X),A\otimes_{X}C_0(X,B)) \\
		};
		\path[->,font=\scriptsize]
		(m-1-1) edge node[auto] {$ \cong $} (m-1-2)
		
		(m-1-2) edge node[auto] {$  $} (m-1-3)
		(m-1-3) edge node[auto] {$ \sigma_A $} (m-2-3)
		(m-1-1) edge node[auto] {$ \varepsilon  $} (m-2-3)
		
		;
		\end{tikzpicture}
	\end{center}
	Under the canonical identifications of $G$-algebras $A\otimes_X C_0(X)\cong A$ and $A\otimes_X C_0(X,B)\cong A\otimes B$ we will view $\varepsilon$ as a map
	$$\varepsilon:\K_*(B)\rightarrow \KK_*^G(A,A\otimes B).$$
	Now for any proper and $G$-compact $G$-space $Y\subseteq \mathcal{E}(G)$  we define a map $\alpha_Y$ as the composition
	\begin{center}
		\begin{tikzpicture}[description/.style={fill=white,inner sep=2pt}]
		\matrix (m) [matrix of math nodes, row sep=3em,
		column sep=2.5em, text height=1.5ex, text depth=0.25ex]
		{ \KK_*^G(C_0(Y),A)\otimes \K_*(B) &  \KK_*^G(C_0(Y),A)\otimes \KK_*^G(A,A\otimes B)\\
			&   \KK_*^G(C_0(Y),A\otimes B) \\
		};
		\path[->,font=\scriptsize]
		(m-1-1) edge node[auto] {$\id\otimes\varepsilon $} (m-1-2)
		(m-1-2) edge node[auto] {$ \otimes_A $} (m-2-2)
		(m-1-1) edge node[auto] {$ \alpha_Y  $} (m-2-2)
		
		;
		\end{tikzpicture}
	\end{center}
	Passing to the limit, the maps $\alpha_Y$ induce the desired map
	$$\alpha_G:\K_*^{\mathrm{top}}(G;A)\otimes \K_*(B)\rightarrow \K_*^{\mathrm{top}}(G;A\otimes B).$$
	\begin{defi}
		We denote by $\mathcal{N}_G$ the class of all separable exact $G$-algebras $A$ such that $\alpha_G$ is an isomorphism for all $B$ with $\K_*(B)$ free abelian.
	\end{defi}
	We will now show, that for a $G$-algebra $A$ to be in $\mathcal{N}_G$ also corresponds to satisfying a $G$-equivariant version of the Künneth formula:
	\begin{prop}\label{Prop:Equivariant Kunneth sequence}
		Let $A$ be a separable and exact $G$-algebra. Then $A\in \mathcal{N}_G$ if and only if for every $\mathrm{C}^*$-algebra $B$, there exists a canonical homomorphism $$\beta_G:\K_*^{\mathrm{top}}(G;A\otimes B)\rightarrow \mathrm{Tor}(\K_*^{\mathrm{top}}(G;A),\K_*(B))$$ such that the sequence
		
			$$0\rightarrow \K_*^{\mathrm{top}}(G;A)\otimes\K_*(B)\stackrel{\alpha_G}{\rightarrow}\K_*^{\mathrm{top}}(G;A\otimes B)\stackrel{\beta_G}{\rightarrow}\mathrm{Tor}(\K_*^{\mathrm{top}}(G;A),\K_*(B))\rightarrow 0$$
		is exact.
	\end{prop}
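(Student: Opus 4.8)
The plan is to mirror the proof of the classical characterization of the Künneth formula due to Rosenberg--Schochet \cite{MR650021}, in the form of \cite[Proposition~4.2]{CEO}, replacing ordinary $\K$-theory by the topological $\K$-theory $\K_*^{\mathrm{top}}(G;-)=\lim_Y\KK_*^G(C_0(Y),-)$ and the map $\alpha$ by $\alpha_G$. The \emph{only if} direction is immediate: if $\K_*(B)$ is free abelian then $\mathrm{Tor}(\K_*^{\mathrm{top}}(G;A),\K_*(B))=0$, so the postulated sequence collapses to the assertion that $\alpha_G$ is an isomorphism, which is precisely the defining condition of $\mathcal{N}_G$. The substance lies in the \emph{if} direction, which I would prove by a geometric resolution argument.

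First I would reduce to separable $B$ (the general case following from a routine inductive-limit argument together with Theorem~\ref{Theorem:Continuity of top. K-theory}) and, after stabilizing $B$ if necessary, construct a \emph{geometric resolution}: a semisplit short exact sequence
$$0\longrightarrow B_1\longrightarrow B_0\longrightarrow B\longrightarrow 0$$
of separable $\mathrm{C}^*$-algebras in which $\K_*(B_0)$ and $\K_*(B_1)$ are free abelian and whose induced $\K$-theory sequence is a free resolution $0\to\K_*(B_1)\to\K_*(B_0)\to\K_*(B)\to 0$ of $\K_*(B)$. Such a resolution arises from the mapping cylinder of a $*$-homomorphism $N\to B$, where $N$ has free $\K$-theory surjecting onto $\K_*(B)$; the quotient map of the mapping cylinder always admits a completely positive contractive section (namely $b\mapsto(0,(1-t)b)$), so the sequence is semisplit irrespective of the nuclearity of $B$, and the induced boundary map vanishes so that the $\K$-theory sequence is short exact.

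Next I would apply $\K_*^{\mathrm{top}}(G;A\otimes-)$. Since $A$ is exact, tensoring the resolution with $A$ yields a $G$-equivariantly semisplit exact sequence $0\to A\otimes B_1\to A\otimes B_0\to A\otimes B\to 0$ (the section $\id_A\otimes s$ is automatically $G$-equivariant, as $G$ acts trivially on the $B$-variable). Half-exactness of $\KK_*^G(C_0(Y),-)$ for semisplit extensions, together with exactness of filtered colimits, shows that $\K_*^{\mathrm{top}}(G;-)$ sends this extension to a six-term exact sequence. Writing $f$ for the induced map $\K_*^{\mathrm{top}}(G;A\otimes B_1)\to\K_*^{\mathrm{top}}(G;A\otimes B_0)$, exactness produces a short exact sequence
$$0\longrightarrow\mathrm{coker}\,f\longrightarrow\K_*^{\mathrm{top}}(G;A\otimes B)\stackrel{\bar\partial}{\longrightarrow}\ker f\longrightarrow 0,$$
the degree shift of the boundary map accounting for the grading shift in the $\mathrm{Tor}$ term.

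Finally I would identify the two outer groups. By naturality of $\alpha_G$ with respect to the $*$-homomorphism $B_1\to B_0$, and because $A\in\mathcal{N}_G$ forces $\alpha_{G,B_0}$ and $\alpha_{G,B_1}$ to be isomorphisms (as $\K_*(B_0),\K_*(B_1)$ are free), the map $f$ is identified with $\id_{\K_*^{\mathrm{top}}(G;A)}\otimes\iota$, where $\iota\colon\K_*(B_1)\to\K_*(B_0)$ is the inclusion of the free resolution. Right-exactness of $\otimes$ and the computation of $\mathrm{Tor}$ from a length-one free resolution then give $\mathrm{coker}\,f\cong\K_*^{\mathrm{top}}(G;A)\otimes\K_*(B)$ and $\ker f\cong\mathrm{Tor}(\K_*^{\mathrm{top}}(G;A),\K_*(B))$; naturality of $\alpha_G$ with respect to $B_0\to B$ shows the left-hand inclusion above coincides with $\alpha_{G,B}$, and one sets $\beta_G:=\bar\partial$ composed with this identification. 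I expect the main obstacle to be the bookkeeping of this last step: verifying that the naturally occurring inclusion is genuinely $\alpha_{G,B}$ (rather than agreeing only up to sign) and that the resulting $\beta_G$ is canonical and independent of the chosen geometric resolution, which is where one must stay most faithful to the definitions of $\alpha_G$ and of the external product $\sigma_A$.
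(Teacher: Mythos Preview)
Your proposal is correct and is, at bottom, the same argument the paper uses: the paper simply packages the geometric-resolution step by observing that $B\mapsto \K_*^{\mathrm{top}}(G;A\otimes B)$ satisfies the axioms of a \emph{K\"unneth functor} in the sense of \cite[Definition~3.1]{CEO} (stability and homotopy invariance from topological $\K$-theory, half-exactness for semisplit extensions via \cite[Lemma~4.1]{CEO} and \cite[Proposition~5.6]{Tu98}, and the free-$\K$-theory isomorphism from $A\in\mathcal{N}_G$), and then invokes the abstract result \cite[Theorem~3.3]{CEO}, whose proof is exactly the Rosenberg--Schochet resolution argument you have written out by hand. Your explicit version has the advantage of being self-contained and making visible where the semisplit section and the exactness of filtered colimits enter; the paper's version is shorter and automatically inherits the canonicity and resolution-independence of $\beta_G$ from the cited theorem, sparing you the bookkeeping you flagged in your last paragraph.
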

	\begin{proof}
		Let $\mathcal{S}$ denote the category of all separable $\mathrm{C}^*$-algebras with $\ast$-homo\-morphisms as morphisms, and let $\mathbf{Ab}$ denote the category of abelian groups. Consider the functor $F_*:\mathcal{S}\rightarrow\mathbf{Ab}$, given by $F_*(B)=\K_*^{\mathrm{top}}(G;A\otimes B)$ and $F_*(\Phi)=(\id\otimes \Phi)_*$ for a $\ast$-homomorphism $\Phi:B_1\rightarrow B_2$. We will show that $F_*$ is a Künneth functor in the sense of \cite[Definition~3.1]{CEO}, provided that $A\in\mathcal{N}_G$. It is clear, that $F_*$ is stable and homotopy invariant, since the topological $\K$-theory has these properties. To see (K2), combine \cite[Lemma~4.1]{CEO} with \cite[Proposition~5.6]{Tu98}. Item (K3) again follows from the corresponding property of topological K-theory and (K4) is precisely what it means for $A$ to be in the class $\mathcal{N}_G$. Hence an application of \cite[Theorem~3.3]{CEO} completes the proof.	
	\end{proof}
	The class $\mathcal{N}_G$ enjoys many stability properties similar to those of $\mathcal{N}$:
	\begin{lemma}\label{Lemma:Stability Properties of N_G}
		Let $G$ be a second countable, locally compact Hausdorff groupoid. Then the following hold:
		\begin{enumerate}
			\item If $A\in\mathcal{N}_G$ and $B$ is a separable exact $\mathrm{C}^*$-algebra, which is $\KK^G$-dominated by $A$ (i.e. there exist $x\in\KK^G(A,B)$ and $y\in \KK^G(B,A)$ such that $y\otimes x=1_B\in \KK^G(B,B)$), then $B\in\mathcal{N}_G$.
			\item If $0\rightarrow I\rightarrow A\rightarrow A/I\rightarrow 0$ is a semi-split short exact sequence of $G$-algebras such that two of them are in $\mathcal{N}_G$, then so is the third.
			\item If $A\in\mathcal{N}_G$ and $B\in\mathcal{N}$, then $A\otimes B\in\mathcal{N}_G$, where $A\otimes B$ is equipped with the action $\alpha\otimes\id$.
			\item If $(A_n,\varphi_n)_n$ is an inductive sequence of $G$-algebras with injective and $G$-equivariant connecting maps, such that $A_n\in\mathcal{N}_G$ for all $n\in\NN$, then $A\in\mathcal{N}_G$.
		\end{enumerate}
	\end{lemma}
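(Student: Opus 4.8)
The plan is to transport, one at a time, the four permanence properties of $\mathcal{N}$ recalled above to the class $\mathcal{N}_G$, the only change being that ordinary $\K$-theory is replaced by $\K^{\mathrm{top}}_*(G;-)$ and all arrows are built from $\alpha_G$. The common tool is the \emph{naturality of $\alpha_G$ in its coefficient variable with respect to $\KK^G$}: fixing a test algebra $C$ with $\K_*(C)$ free abelian and writing $P(D)=\K^{\mathrm{top}}_*(G;D)\otimes\K_*(C)$ and $Q(D)=\K^{\mathrm{top}}_*(G;D\otimes C)$, both $P$ and $Q$ are functors on the $\KK^G$-category ($P$ because $\K^{\mathrm{top}}_*(G;-)$ factors through $\KK^G$, $Q$ because $x\mapsto x\otimes 1_C$ via $\sigma_C$ is $\KK^G$-functorial), and $\alpha_G$ is a natural transformation $P\Rightarrow Q$. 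This is exactly the associativity of the Kasparov product already used to define $\varepsilon$ and the maps $\alpha_Y$.

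For (1), choose $x\in\KK^G(A,B)$ and $y\in\KK^G(B,A)$ with $y\otimes x=1_B$, and let $\alpha_A\colon P(A)\to Q(A)$, $\alpha_B\colon P(B)\to Q(B)$ denote $\alpha_G$ for the coefficients $A$ and $B$. Naturality yields $Q(y)\circ\alpha_B=\alpha_A\circ P(y)$ and $\alpha_B\circ P(x)=Q(x)\circ\alpha_A$, while functoriality gives $P(x)\circ P(y)=\mathrm{id}$ and $Q(x)\circ Q(y)=\mathrm{id}$. Since $\alpha_A$ is an isomorphism, a direct check shows that $P(x)\circ\alpha_A^{-1}\circ Q(y)$ is a two-sided inverse of $\alpha_B$, so $B\in\mathcal{N}_G$ (separability and exactness of $B$ being assumed). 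For (4), Theorem~\ref{Theorem:Continuity of top. K-theory} applied to $(A_n)$ and to $(A_n\otimes C)$ (whose connecting maps $\varphi_n\otimes\mathrm{id}$ remain injective with limit $A\otimes C$) identifies $\K^{\mathrm{top}}_*(G;A)\cong\varinjlim_n\K^{\mathrm{top}}_*(G;A_n)$ and $\K^{\mathrm{top}}_*(G;A\otimes C)\cong\varinjlim_n\K^{\mathrm{top}}_*(G;A_n\otimes C)$; as $-\otimes\K_*(C)$ commutes with colimits and $\alpha_G$ is natural in the maps $\psi_n$, the map $\alpha_G$ for $A$ is $\varinjlim_n$ of the maps $\alpha_G$ for $A_n$, a colimit of isomorphisms and hence an isomorphism. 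Separability and exactness pass to the limit, so $A\in\mathcal{N}_G$.

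For (2), fix $C$ with $\K_*(C)$ free. The semi-split extension and its minimal tensor product with $C$ (again semi-split, hence again exact) yield, through Lemma~\ref{Lem:LES}, two long exact sequences in $\K^{\mathrm{top}}_*(G;-)$: the first obtained by tensoring the long exact sequence for $I,A,A/I$ with the free abelian group $\K_*(C)$ (so still exact), the second the long exact sequence for $I\otimes C,\,A\otimes C,\,(A/I)\otimes C$. Naturality makes $\alpha_G$ a ladder between them commuting with the boundary maps, and the Five Lemma propagates the isomorphism property from any two of $I,A,A/I$ to the third; since exactness is inherited two-out-of-three, the third algebra lies in $\mathcal{N}_G$. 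Part (3) is the most involved and rests on a coherence identity: the iterated external product
\[
\K^{\mathrm{top}}_*(G;A)\otimes\K_*(B)\otimes\K_*(C)\longrightarrow\K^{\mathrm{top}}_*(G;A\otimes B\otimes C)
\]
factors both as $\alpha_G(\text{coeff }A\otimes B,\text{ test }C)\circ(\alpha_G(\text{coeff }A,\text{ test }B)\otimes\mathrm{id})$ and as $\alpha_G(\text{coeff }A,\text{ test }B\otimes C)\circ(\mathrm{id}\otimes\alpha_{B,C})$, the two agreeing by associativity of the Kasparov product. I would first record that for any $E\in\mathcal{N}$ with $\K_*(E)$ free one has $A\otimes E\in\mathcal{N}_G$: taking $B=E$, the three maps $\alpha_G(\text{coeff }A,\text{ test }E)$, $\alpha_{E,C}$ and $\alpha_G(\text{coeff }A,\text{ test }E\otimes C)$ are all isomorphisms (the last because $\K_*(E\otimes C)\cong\K_*(E)\otimes\K_*(C)$ is free), which forces the remaining factor $\alpha_G(\text{coeff }A\otimes E,\text{ test }C)$ to be an isomorphism as well.

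Now take $B\in\mathcal{N}$ (which we take exact, so all tensor products stay exact) and a geometric resolution in the sense of Rosenberg--Schochet \cite{MR650021,CEO}: a $c_0$-sum $F$ of copies of $\CC$ and $C_0(\RR)$ (so $F$ is bootstrap, $F\in\mathcal{N}$ and $\K_*(F)$ is free) together with a map $F\to B$ inducing a surjection on $\K$-theory, whose mapping cone gives a semi-split extension $0\to SB\to C_f\to F\to 0$ with $C_f\in\mathcal{N}$ (by (2)) and $\K_*(C_f)$ free. Applying the previous observation to $E=F$ and $E=C_f$ gives $A\otimes F,\,A\otimes C_f\in\mathcal{N}_G$; tensoring the resolution with $A$ yields a semi-split extension $0\to A\otimes SB\to A\otimes C_f\to A\otimes F\to 0$ of $G$-algebras with two terms in $\mathcal{N}_G$, so $A\otimes SB\in\mathcal{N}_G$ by (2), whence $A\otimes B\in\mathcal{N}_G$ by Bott periodicity and (1). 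The main obstacle is making the coherence identity for the iterated external product precise in the equivariant, parametrised setting: one must verify that the two factorisations literally coincide at the level of the defining maps $\alpha_Y$ and survive the passage to the limit over $G$-compact $Y\subseteq\mathcal{E}(G)$. This is the one genuinely new verification; once it is in place, (1), (2) and (4) are direct adaptations of the corresponding arguments for $\mathcal{N}$.
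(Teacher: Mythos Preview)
Your arguments for (1), (2) and (4) are essentially the paper's own: the retract diagram-chase for (1), the Five Lemma applied to the two long exact sequences for (2), and the continuity result Theorem~\ref{Theorem:Continuity of top. K-theory} for (4). One small correction: in (2) the long exact sequence in $\K^{\mathrm{top}}_*(G;-)$ for a semi-split extension is not Lemma~\ref{Lem:LES} (which concerns Going-Down functors) but rather comes from \cite[Proposition~5.6]{Tu98}, as in the proof of Proposition~\ref{Prop:Equivariant Kunneth sequence}.

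For (3) you take a genuinely different route. The paper argues directly from Proposition~\ref{Prop:Equivariant Kunneth sequence}: since $A\in\mathcal{N}_G$, the equivariant K\"unneth short exact sequence exists for the pair $(A,B)$ and for the pair $(A,B\otimes D)$, where $D$ is the test algebra with free $\K$-theory. Tensoring the first sequence with the free group $\K_*(D)$ (still exact) and using the identification $\Tor(\K^{\mathrm{top}}_*(G;A),\K_*(B))\otimes\K_*(D)\cong\Tor(\K^{\mathrm{top}}_*(G;A),\K_*(B)\otimes\K_*(D))$, one obtains two short exact columns linked by the horizontal maps $\id\otimes\alpha$, $\alpha_G$, and $\Tor(\id,\alpha)$. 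The outer horizontals are isomorphisms because $B\in\mathcal{N}$, and the Five Lemma gives the middle one. Your approach instead first isolates the special case where the second factor has free $\K$-theory, handled by the coherence identity for the iterated external product, and then reduces the general $B\in\mathcal{N}$ to that case via a geometric resolution and the already-established (2). Both arguments are correct. The paper's is shorter and sidesteps the explicit coherence verification you flag as the ``one genuinely new verification'', but it leans on Proposition~\ref{Prop:Equivariant Kunneth sequence}, which itself encapsulates the geometric-resolution machinery. Your argument is more hands-on and uses only the raw definition of $\mathcal{N}_G$ together with (1) and (2); the price is that the coherence square for $\alpha_G$ really does have to be checked at the level of the maps $\alpha_Y$, though this is a routine consequence of associativity of the Kasparov product and naturality of $\sigma$.
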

	\begin{proof}
		For the proof of $(1)$ let $D$ be any $\mathrm{C}^*$-algebra with $\K_*(D)$ free abelian and consider the following commutative diagram:
		\begin{center}
			\begin{tikzpicture}[description/.style={fill=white,inner sep=2pt}]
			\matrix (m) [matrix of math nodes, row sep=3em,
			column sep=3em, text height=1.5ex, text depth=0.25ex]
			{ \K_*^{\mathrm{top}}(G;B)\otimes \K_*(D) & \K_*^{\mathrm{top}}(G;A)\otimes \K_*(D) & \K_*^{\mathrm{top}}(G;B)\otimes \K_*(D)\\
				\K_*^{\mathrm{top}}(G;B\otimes D)& \K_*^{\mathrm{top}}(G;A\otimes D)&\K_*^{\mathrm{top}}(G;B\otimes D) \\
			};
			\path[->,font=\scriptsize]
			(m-1-1) edge node[auto] {$ (\cdot\otimes y)\otimes \id $} (m-1-2)
			(m-1-2) edge node[auto] {$ (\cdot\otimes x)\otimes \id $} (m-1-3)
			(m-2-1) edge node[auto] {$ \otimes \sigma_D(y) $} (m-2-2)
			(m-2-2) edge node[auto] {$ \otimes \sigma_D(x) $} (m-2-3)
			(m-1-1) edge node[auto] {$ \alpha_G  $} (m-2-1)
			(m-1-2) edge node[auto] {$ \alpha_G  $} (m-2-2)
			(m-1-3) edge node[auto] {$ \alpha_G  $} (m-2-3)
			;
			\end{tikzpicture}
		\end{center}
		By assumption, the composition of the horizontal arrows are the identity maps in each row and the middle vertical map is an isomorphism. An easy diagram chase then shows, that the left (and right) vertical arrows must be isomorphisms as well.
		
		For the proof of $(2)$, we first note that exactness passes to ideals (see \cite[Theorem~IV.3.4.3]{MR2188261}), quotients by \cite[Corollary~IV.3.4.19]{MR2188261} and semi-split extensions (see \cite[Theorem~IV.3.4.20]{MR2188261}) by deep results of Kirchberg and Wassermann. By \cite[Lemma~4.1]{CEO} the sequence
		$0\rightarrow I\otimes B\rightarrow A\otimes B\rightarrow A/I\otimes B\rightarrow 0$ is a semi-split short exact sequence as well, and hence $(2)$ follows from an easy application of the Five Lemma.
		
		For $(3)$ let us first observe, that if $A$ and $B$ are separable and exact $\mathrm{C}^*$-algebras, then so is their minimal tensor product $A\otimes B$ by associativity of the minimal tensor product. Now suppose that $A\in\mathcal{N}_G$ and $B\in\mathcal{N}$. Let $D$ be any $\mathrm{C}^*$-algebra with $\K_*(D)$ free abelian. As in the proof of \cite[Lemma~4.4(iii)]{CEO} we can use this fact to make the canonical identification 
		$$\Tor(\K_*^{\mathrm{top}}(G;A),\K_*(B)\otimes \K_*(D))\cong \Tor(\K_*^{\mathrm{top}}(G;A),\K_*(B))\otimes \K_*(D).$$
		Now consider the following commutative diagram:
		\begin{center}
			\begin{tikzpicture}[description/.style={fill=white,inner sep=2pt}]
			\matrix (m) [matrix of math nodes, row sep=3em,
			column sep=3.0em, text height=1.5ex, text depth=0.25ex]
			{ 	0 & 0\\
				\K_*^{\mathrm{top}}(G;A)\otimes \K_*(B)\otimes \K_*(D) & \K_*^{\mathrm{top}}(G;A)\otimes \K_*(B\otimes D)\\
				\K_*^{\mathrm{top}}(G;A\otimes B)\otimes \K_*(D)& \K_*^{\mathrm{top}}(G;A\otimes B\otimes D) \\
				\Tor(\K_*^{\mathrm{top}}(G;A),\K_*(B)\otimes \K_*(D)) & \Tor(\K_*^{\mathrm{top}}(G;A), \K_*(B\otimes D))\\
				0&0\\
			};
			\path[->,font=\scriptsize]
			(m-2-1) edge node[auto] {$ \id\otimes\alpha $} (m-2-2)
			(m-3-1) edge node[auto] {$ \alpha_G $} (m-3-2)
			(m-4-1) edge node[auto] {$ \Tor(\id,\alpha) $} (m-4-2)
			(m-1-1) edge node[auto] {$   $} (m-2-1)
			(m-1-2) edge node[auto] {$   $} (m-2-2)
			(m-2-1) edge node[auto] {$   $} (m-3-1)
			(m-2-2) edge node[auto] {$   $} (m-3-2)
			(m-3-1) edge node[auto] {$   $} (m-4-1)
			(m-3-2) edge node[auto] {$   $} (m-4-2)
			(m-4-1) edge node[auto] {$   $} (m-5-1)
			(m-4-2) edge node[auto] {$   $} (m-5-2)
			;
			\end{tikzpicture}
		\end{center}
		Under the identification of the $\Tor$ groups mentioned above, the first column is the equivariant Künneth sequence for $(A,B)$ tensored with $\K_*(D)$. Thus, using our assumption, that $A\in\mathcal{N}_G$, it is exact by Proposition \ref{Prop:Equivariant Kunneth sequence}. Similarly, the second column is the equivariant Künneth sequence for $(A,B\otimes D)$, and hence exact, too.
		Finally, the top and bottom arrows are isomorphisms, since $B$ was assumed to be in $\mathcal{N}$. By the Five Lemma, the middle vertical map $\alpha_G$ must be an isomorphism as well.
		
		Finally, for item $(4)$ note, that separability clearly passes to sequential inductive limits and exactness passes to inductive limits with injective connecting maps (see \cite[Proposition~IV.3.4.4]{MR2188261}). Hence the result follows from Theorem \ref{Theorem:Continuity of top. K-theory}.
	\end{proof}
	Using the Baum-Connes assembly map we can relate the map $\alpha_G$ to the map $\alpha$ for the crossed product as follows:
	\begin{prop}
		Let $A$ be a separable exact $G$-algebra and $B$ be any $\mathrm{C}^*$-algebra. Then the diagram
		\begin{center}
			\begin{tikzpicture}[description/.style={fill=white,inner sep=2pt}]
			\matrix (m) [matrix of math nodes, row sep=3em,
			column sep=2.5em, text height=1.5ex, text depth=0.25ex]
			{ \K_*^{\mathrm{top}}(G;A)\otimes \K_*(B) & \K_*(A\rtimes_r G)\otimes \K_*(B)\\
				\K_*^{\mathrm{top}}(G;A\otimes B)& \K_*((A\otimes B)\rtimes_r G) \\
			};
			\path[->,font=\scriptsize]
			(m-1-1) edge node[auto] {$ \mu_A\otimes \id $} (m-1-2)
			(m-2-1) edge node[auto] {$ \mu_{A\otimes B} $} (m-2-2)
			(m-1-1) edge node[auto] {$ \alpha_G  $} (m-2-1)
			(m-1-2) edge node[auto] {$ \alpha  $} (m-2-2)
			;
			\end{tikzpicture}
		\end{center}
		commutes. In particular, if $\mu_{A\otimes B}$ is an isomorphism for all $\mathrm{C}^*$-algebras $B$, then $A\in\mathcal{N}_G$ if and only if $A\rtimes_r G\in\mathcal{N}$.
	\end{prop}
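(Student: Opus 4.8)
The plan is to reduce the commutativity of the square to a single compatibility statement between the descent functor and external tensor products, and then to read off the stated equivalence by a diagram chase. First I would use that both $\mu$ and $\alpha_G$ are defined as limits over the $G$-compact proper subspaces $Y\subseteq\mathcal{E}(G)$, with the structure maps (left Kasparov products by the connecting morphisms) compatible with all four maps in the diagram. Hence it suffices to fix such a $Y$ and verify that the analogous square with $\KK^G_*(C_0(Y),A)$ in the top-left corner commutes, where the left vertical map is the level-$Y$ map $\alpha_Y$ and the top horizontal map is $\mu_A^Y\otimes\id$ for the assembly map $\mu_A^Y\colon\KK^G_*(C_0(Y),A)\to\K_*(A\rtimes_r G)$.

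Next I would unwind $\mu_A^Y$ in its descent picture: for $x\in\KK^G_*(C_0(Y),A)$ one has $\mu_A^Y(x)=[\lambda_Y]\otimes_{C_0(Y)\rtimes_r G}j_r(x)$, where $j_r$ denotes Le Gall's reduced descent and $[\lambda_Y]\in\K_0(C_0(Y)\rtimes_r G)$ is the canonical class built from a cutoff function on $Y$. Tracing a generator $x\otimes b$ around the two paths and using that descent is multiplicative with respect to the Kasparov product, namely $j_r(x\otimes_A\varepsilon(b))=j_r(x)\otimes_{A\rtimes_r G}j_r(\varepsilon(b))$, together with associativity of the Kasparov product, the lower-left path evaluates to $\mu_A^Y(x)\otimes_{A\rtimes_r G}j_r(\varepsilon(b))$, while the upper-right path evaluates to $\mu_A^Y(x)\otimes_{A\rtimes_r G}\sigma_{A\rtimes_r G}(b)$. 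Thus the entire square reduces to the single identity
$$j_r(\varepsilon(b))=\sigma_{A\rtimes_r G}(b)\in\KK(A\rtimes_r G,(A\rtimes_r G)\otimes B),$$
where the two sides are compared under the isomorphism $\Psi\colon(A\otimes B)\rtimes_r G\xrightarrow{\ \cong\ }(A\rtimes_r G)\otimes B$.

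The hard part will be this last identity, which expresses the compatibility of descent with the external tensor product by the trivially acted upon algebra $B$. I would exploit the factorization $\varepsilon(b)=\sigma_A(p^*b)$, where $p\colon G\to 1$ is the projection to the trivial groupoid and $\sigma_A$ is the equivariant external product balanced over $G^{(0)}$; since the $B$-direction carries the trivial $G$-action, descent only affects the equivariant factor, so that $j_r\circ\sigma_A\circ p^*$ agrees with external multiplication by $b$ on $A\rtimes_r G$. Making this precise amounts to matching the Kasparov cycles representing the two sides and invoking the known compatibility of the reduced descent with external products by a fixed $\mathrm{C}^*$-algebra carrying the trivial action (as in the treatment of groupoid-equivariant $\KK$ by Le Gall and Tu); this bookkeeping, rather than any conceptual difficulty, is where the main effort lies. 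Passing back to the limit over $Y$ then gives commutativity of the original square.

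Finally, for the displayed consequence I would argue by a diagram chase. If $\mu_{A\otimes B}$ is an isomorphism for all $B$, then in particular $\mu_A$ is an isomorphism, hence so is $\mu_A\otimes\id$ after tensoring with the fixed abelian group $\K_*(B)$. The commuting square then yields $\alpha_G=\mu_{A\otimes B}^{-1}\circ\alpha\circ(\mu_A\otimes\id)$, so that $\alpha_G$ is an isomorphism precisely when the Künneth map $\alpha$ for $A\rtimes_r G$ is, where we recall that $\K_*((A\otimes B)\rtimes_r G)\cong\K_*((A\rtimes_r G)\otimes B)$ via $\Psi$. Running this equivalence over all $B$ with $\K_*(B)$ free abelian shows that $A\in\mathcal{N}_G$ if and only if $A\rtimes_r G\in\mathcal{N}$.
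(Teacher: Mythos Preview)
Your proposal is correct and follows essentially the same route as the paper: reduce to the level-$Y$ square, use the descent description of $\mu$, and appeal to the single compatibility $j_r(\varepsilon(b))=\sigma_{A\rtimes_r G}(b)$, then read off the equivalence from the commuting square. The paper simply asserts this compatibility at the outset and then runs the same chain of Kasparov-product identities you wrote down; your additional discussion of how to verify $j_r(\varepsilon(b))=\sigma_{A\rtimes_r G}(b)$ via the factorization through $p^*$ and Le Gall's compatibility of descent with external products is a welcome elaboration but not a different argument.
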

	\begin{proof}
		First, note that for all $x\in \K_*(B)$ we have $j_G(\varepsilon(x))=\sigma_{A\rtimes_r G}(x)$, where $j_G:\KK^G(A,A\otimes B)\rightarrow \KK(A\rtimes_r G,(A\otimes B)\rtimes_r G)$ is the descent map (see \cite[Proposition~7.2.1]{LeGall}).
		Using this, we can easily check commutativity of the above diagram on the level of each $G$-compact subspace $Y\subseteq \mathcal{E}(G)$ as follows: For $y\in \KK^G_*(C_0(Y),A)$ and $x\in\K_*(B)$ we compute
		\begin{align*}
		\mu_{Y,A\otimes B}(\alpha_Y(y\otimes x)) & = [p_Y]\otimes_{C_0(Y)\rtimes_r G} j_G(\alpha_Y(y\otimes x))\\
		& = [p_Y]\otimes_{C_0(Y)\rtimes_r G} j_G(y\otimes_A \varepsilon(x))\\
		& = [p_Y]\otimes_{C_0(Y)\rtimes_r G} (j_G(y)\otimes_{A\rtimes_r G} \sigma_{A\rtimes_r G}(x))\\
		& = \mu_{Y,A}(y)\otimes \sigma_{A\rtimes_r G}(x)\\
		& = \alpha(\mu_{Y,A}(y)\otimes x).
		\end{align*}
		The second statement then follows directly from the commutativity of the diagram.
	\end{proof}
	We are now ready for the main result of this section, and this is the place where our techniques require us to restrict ourselves to ample groupoids.
	\begin{satz}\label{Theorem:Kunneth}
		Let $G$ be a second countable ample groupoid and $A$ a separable and exact $G$-algebra. Suppose that $A_{\mid K}\rtimes K\in\mathcal{N}$ for all compact open subgroupoids $K\subseteq G$. Then $A\in\mathcal{N}_G$.
	\end{satz}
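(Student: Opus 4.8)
The plan is to realize $\alpha_G$ as the value at $G$ of a Going-Down transformation and then invoke Theorem \ref{Theorem:Going-Down Theorem} to reduce the problem to the compact open subgroupoids $H\subseteq G$, where the hypothesis $A_{\mid H}\rtimes H\in\mathcal{N}$ lives. Fix throughout a separable $\mathrm{C}^*$-algebra $B$ with $\K_*(B)$ free abelian. For each $H\in\mathcal{S}(G)$ I would consider the two functors on $\mathcal{C}(H)$
$$\mathcal{F}_H^*(C_0(X)) := \KK^H_*(C_0(X),A_{\mid H})\otimes\K_*(B) \quad\text{and}\quad \mathcal{G}_H^*(C_0(X)) := \KK^H_*(C_0(X),(A\otimes B)_{\mid H}).$$
Here $\mathcal{G}$ is exactly the Going-Down functor of Example \ref{Example:Going-Down functor} with coefficient the $G$-algebra $A\otimes B$ (which is a $G$-algebra by the discussion preceding this theorem, since $A$ is separable and exact). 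That $\mathcal{F}$ is again a Going-Down functor is where freeness of $\K_*(B)$ enters: a free abelian group is flat, so $-\otimes\K_*(B)$ is exact, and hence half-exactness, homotopy invariance, the suspension equivalence and the induction isomorphism $I_H^G(n)$ all descend from Example \ref{Example:Going-Down functor} to $\mathcal{F}$ after tensoring with $\id_{\K_*(B)}$.

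Next I would assemble the transformation $\Lambda\colon\mathcal{F}\to\mathcal{G}$ from the maps $\varepsilon$: for each $H$ let $\varepsilon_H\colon\K_*(B)\to\KK^H_*(A_{\mid H},A_{\mid H}\otimes B)$ be the restriction to $H$ of the map $\varepsilon$ constructed before the definition of $\mathcal{N}_G$ (using that $A_{\mid H}$ is again separable and exact), and set
$$\Lambda_H^*(C_0(X))(y\otimes x) := y\otimes_{A_{\mid H}}\varepsilon_H(x).$$
Associativity of the Kasparov product makes each $\Lambda_H^*(C_0(X))$ natural in $C_0(X)$ and compatible with suspension, exactly as in the Example following the definition of a Going-Down transformation; compatibility with induction, $I_H^G(n)\circ\Lambda_H^n=\Lambda_G^n\circ I_H^G(n)$, follows from \cite[Lemma~6.7]{1806.00391} applied to $\varepsilon$. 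Because tensoring an abelian group commutes with inductive limits, the induced map $\Lambda^*(G)\colon\mathcal{F}^*(G)\to\mathcal{G}^*(G)$ is, under the identifications $\mathcal{F}^*(G)\cong\K_*^{\mathrm{top}}(G;A)\otimes\K_*(B)$ and $\mathcal{G}^*(G)\cong\K_*^{\mathrm{top}}(G;A\otimes B)$, precisely the map $\alpha_G$ built above from the maps $\alpha_Y$.

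By Theorem \ref{Theorem:Going-Down Theorem} it then suffices to show that $\Lambda_H^n(C(H^{(0)}))$ is an isomorphism for every compact open subgroupoid $H\subseteq G$. Here I would run the identification already used in the proof of Theorem \ref{Theorem:Continuity of top. K-theory}: the groupoid Green--Julg theorem \cite[Proposition~6.25]{Tu99} identifies $\KK^H_*(C(H^{(0)}),D)$ with $\K_*(D\rtimes H)$ naturally in the $H$-algebra $D$, and \cite[Theorem~6.1]{MR3383622} identifies $(A_{\mid H}\otimes B)\rtimes H$ with $(A_{\mid H}\rtimes H)\otimes B$. Under these identifications $\Lambda_H^n(C(H^{(0)}))$ becomes the usual Künneth map $\alpha_{A_{\mid H}\rtimes H,B}\colon\K_*(A_{\mid H}\rtimes H)\otimes\K_*(B)\to\K_*((A_{\mid H}\rtimes H)\otimes B)$, because Green--Julg intertwines $\varepsilon_H$ with the external product $\sigma_{A_{\mid H}\rtimes H}$ (the compact-groupoid analogue of the commuting square relating $\alpha_G$ and $\alpha$). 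The hypothesis $A_{\mid H}\rtimes H\in\mathcal{N}$ says precisely that this map is an isomorphism whenever $\K_*(B)$ is free abelian, which finishes the proof.

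The step I expect to require the most care is verifying that $\Lambda_H^n(C(H^{(0)}))$ really is the Künneth map $\alpha_{A_{\mid H}\rtimes H,B}$ — that is, checking that the Green--Julg isomorphism carries the equivariant map $\varepsilon_H$ onto the external tensor product $\sigma_{A_{\mid H}\rtimes H}$. This is a naturality computation analogous to the one in the Proposition relating $\mu$, $\alpha_G$ and $\alpha$, and it is the linchpin connecting the groupoid-equivariant machinery to the non-equivariant class $\mathcal{N}$.
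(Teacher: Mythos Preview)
Your proposal is correct and follows essentially the same route as the paper: define the two Going-Down functors $\mathcal{F}_H=\KK^H_*(-,A_{\mid H})\otimes\K_*(B)$ and $\mathcal{G}_H=\KK^H_*(-,(A\otimes B)_{\mid H})$, take $\Lambda$ to be the transformation built from the maps $\alpha_Y$, and apply Theorem~\ref{Theorem:Going-Down Theorem}. The paper's proof is terser; it simply asserts that $\mathcal{F}$ is a Going-Down functor and that the hypothesis ``translates'' to $\Lambda_K(C(K^{(0)}))$ being an isomorphism, whereas you explicitly supply the two ingredients behind those assertions---flatness of $\K_*(B)$ for half-exactness of $\mathcal{F}$, and the Green--Julg identification \cite[Proposition~6.25]{Tu99} together with \cite[Theorem~6.1]{MR3383622} to recognize $\Lambda_H^n(C(H^{(0)}))$ as the ordinary K\"unneth map $\alpha_{A_{\mid H}\rtimes H,B}$.
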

	\begin{proof}
		Let $B$ be a fixed $\mathrm{C}^*$-algebra with $\K_*(B)$ free abelian. For each $H\in \mathcal{S}(G)$ define contravariant functors
		$\mathcal{F}_H:\mathcal{C}(H)\rightarrow\mathbf{Ab}$ and $\mathcal{G}_H:\mathcal{C}(H)\rightarrow\mathbf{Ab}$ by
		$$\mathcal{F}_H(C_0(Y)):=\KK^H_*(C_0(Y),A)\otimes \K_*(B),$$
		$$\mathcal{G}_H(C_0(Y)):=\KK^H_*(C_0(Y),A\otimes B).$$
		Both $(\mathcal{F}_H)_{H\in\mathcal{S}(G)}$ and $(\mathcal{G}_H)_{H\in\mathcal{S}(G)}$ define Going-Down functors in the sense of Definition \ref{Def:GDfunctor}.
		
		Moreover, for each $H\in \mathcal{S}(G)$ and every proper $H$-space $Y$ the maps $\alpha_Y$ determine natural transformations $\Lambda_H:\mathcal{F}_H\rightarrow\mathcal{G}_H$, which form a Going-Down transformation $\Lambda$.
		Our assumptions then translate to the fact that $\Lambda_K:\mathcal{F}_K(C(K^{(0)}))\rightarrow \mathcal{G}_K(C(K^{(0)}))$ is an isomorphism for every compact open subgroupoid $K$ of $G$. Hence, by Theorem \ref{Theorem:Going-Down Theorem} the result follows.
	\end{proof}
	The following corollary gives many examples, when the hypothesis of Theorem \ref{Theorem:Kunneth} are satisfied and thus provides many examples of $G$-algebras in class $\mathcal{N}_G$.
	\begin{kor}\label{Cor:Kunneth}
		Let $G$ be a second countable ample groupoid and $A$ be a separable exact $G$-algebra, such that $A_u$ is type I for all $u\in G^{(0)}$. Then $A\in\mathcal{N}_G$.
	\end{kor}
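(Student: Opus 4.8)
The plan is to reduce the statement to Theorem~\ref{Theorem:Kunneth} and then carry out a structural analysis of crossed products by compact groupoids. Since $A$ is separable and exact, Theorem~\ref{Theorem:Kunneth} tells us that it suffices to show $A_{\mid K}\rtimes K\in\mathcal{N}$ for every compact open subgroupoid $K\subseteq G$. Recalling that the bootstrap class $\mathcal{B}$ is contained in $\mathcal{N}$, and that every separable type~I $\mathrm{C}^*$-algebra lies in $\mathcal{B}$, the whole statement will follow once I show that $A_{\mid K}\rtimes K$ is of type~I for each such $K$.

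First I would record the structure of a compact open subgroupoid $K$. As $K$ is compact, Hausdorff and étale, the range map restricts to a local homeomorphism with compact, hence finite, fibres $K^u$; consequently all isotropy groups $K_u^u$ are finite and every orbit $r(K^u)$ is finite. Moreover $Y:=K^{(0)}$ is compact and totally disconnected and, $K$ being compact and therefore proper, the reduced crossed product $A_{\mid K}\rtimes K$ is a $C(Y/K)$-algebra over the compact, Hausdorff, totally disconnected orbit space $Y/K$.

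The heart of the argument is then the identification of the fibres of $A_{\mid K}\rtimes K$ over $Y/K$. Over an orbit $[u]$ the restriction of $K$ is a finite transitive groupoid with isotropy $K_u^u$, and a Green-type imprimitivity argument identifies the corresponding fibre, up to Morita equivalence, with $A_u\rtimes K_u^u$. Since $K_u^u$ is a finite group and $A_u$ is type~I by hypothesis, the crossed product $A_u\rtimes K_u^u$ is type~I, and type~I is preserved under Morita equivalence; hence every fibre is type~I. This step---pinning down the fibre of the reduced crossed product at a point of the orbit space and relating it to the stabiliser crossed product---is where I expect the main technical work to lie, since the orbits need not be open in $Y$.

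Finally I would invoke the general principle that a $C_0(Z)$-algebra all of whose fibres are type~I is itself type~I: any irreducible representation $\pi$ of such an algebra sends the central copy of $C_0(Z)$ to scalars, hence factors through evaluation at some $z\in Z$ and therefore through the fibre, so that $\pi$ necessarily contains the compacts. Applying this to the $C(Y/K)$-algebra $A_{\mid K}\rtimes K$ shows it is type~I, whence it lies in $\mathcal{B}\subseteq\mathcal{N}$. By Theorem~\ref{Theorem:Kunneth} we conclude $A\in\mathcal{N}_G$, as desired.
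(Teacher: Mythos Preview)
Your proposal is correct and follows exactly the same overall strategy as the paper: reduce to Theorem~\ref{Theorem:Kunneth} by showing that $A_{\mid K}\rtimes K$ is type~I (hence in $\mathcal{B}\subseteq\mathcal{N}$) for every compact open subgroupoid $K$. The only difference is that the paper dispatches the type~I claim in one line by citing \cite[Proposition~10.3]{Tu98}, whereas you sketch a direct proof of that fact via the $C(Y/K)$-algebra structure of the crossed product, identification of the fibres with stabiliser crossed products $A_u\rtimes K_u^u$, and the principle that a $C_0(Z)$-algebra with type~I fibres is type~I. Your outline is sound and is essentially what underlies Tu's result in this special case; the paper's route simply trades the technical work you flag (the fibre identification over non-open orbits) for an existing reference.
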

	\begin{proof}
		It follows from \cite[Proposition~10.3]{Tu98}, that $A_{\mid K}\rtimes K$ is a type I $\mathrm{C}^*$-algebra for all compact subgroupoids $K\subseteq G$, and hence it is contained in the bootstrap class $\mathcal{B}\subseteq \mathcal{N}$. The result then follows from Theorem \ref{Theorem:Kunneth}.
	\end{proof}
	Let us now point out the connections between the mixed Künneth formula and the Baum-Connes conjecture and state our main results concerning the Künneth formula for crossed products:
	\begin{prop}\label{Prop:BCandKunneth}
		Let $G$ be a second countable, locally compact Hausdorff groupoid with Haar system and $A\in\mathcal{N}_G$. Consider the following properties:
		\begin{enumerate}
			\item $G$ satisfies the Baum-Connes conjecture with coefficients in $A\otimes B$ for all separable $\mathrm{C}^*$-algebras $B$ (with respect to the trivial action on the second factor).
			\item $A\rtimes_r G\in\mathcal{N}$.
		\end{enumerate}
		Then $(1)$ implies $(2)$ and the converse holds, provided that $G$ satisfies the Baum-Connes conjecture with coefficients in $A$.
	\end{prop}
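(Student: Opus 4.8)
For the implication $(1)\Rightarrow(2)$ I would simply exploit the commutative square from the Proposition above relating $\alpha_G$ to $\alpha$ via the assembly map; indeed $(1)\Rightarrow(2)$ is essentially its concluding ``in particular'' clause combined with the standing hypothesis $A\in\mathcal{N}_G$. Concretely, fix a separable $B$ with $\K_*(B)$ free abelian. Specialising $(1)$ to $B=\CC$, where $A\otimes B\cong A$, shows that $\mu_A$ is an isomorphism, hence so is $\mu_A\otimes\id$; property $(1)$ makes $\mu_{A\otimes B}$ an isomorphism; and $A\in\mathcal{N}_G$ makes $\alpha_G$ an isomorphism. The identity
\[ \alpha\circ(\mu_A\otimes\id)=\mu_{A\otimes B}\circ\alpha_G \]
then forces $\alpha$ to be an isomorphism. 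Since $B$ was an arbitrary separable algebra with free abelian $\K$-theory, this is exactly the assertion $A\rtimes_r G\in\mathcal{N}$.

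For the converse I would assume $(2)$ together with the extra hypothesis that $G$ satisfies Baum--Connes with coefficients in $A$, so that $\mu_A$ is an isomorphism, and deduce $(1)$ by comparing two Künneth sequences. Because $A\in\mathcal{N}_G$, Proposition \ref{Prop:Equivariant Kunneth sequence} supplies, for \emph{every} separable $B$, the exact sequence
\[ 0\to\K_*^{\mathrm{top}}(G;A)\otimes\K_*(B)\xrightarrow{\alpha_G}\K_*^{\mathrm{top}}(G;A\otimes B)\xrightarrow{\beta_G}\Tor(\K_*^{\mathrm{top}}(G;A),\K_*(B))\to 0. \]
On the other hand $(2)$ says $A\rtimes_r G\in\mathcal{N}$, which by \cite[Proposition~4.2]{CEO} means $A\rtimes_r G$ satisfies the Künneth formula for all $B$; transporting along the natural isomorphism $\Psi\colon(A\otimes B)\rtimes_r G\cong(A\rtimes_r G)\otimes B$ gives, again for every $B$, the exact sequence
\[ 0\to\K_*(A\rtimes_r G)\otimes\K_*(B)\xrightarrow{\alpha}\K_*((A\otimes B)\rtimes_r G)\xrightarrow{\beta}\Tor(\K_*(A\rtimes_r G),\K_*(B))\to 0. \]

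The plan is then to show that the triple $(\mu_A\otimes\id,\ \mu_{A\otimes B},\ \Tor(\mu_A,\id))$ is a morphism from the first sequence to the second and to apply the Five Lemma. The two outer vertical maps are isomorphisms because $\mu_A$ is one, and both $-\otimes\K_*(B)$ and $\Tor(-,\K_*(B))$ are functorial in the first variable and hence preserve isomorphisms of abelian groups. Commutativity of the left-hand square is precisely the assembly-square Proposition above. For the right-hand square I would argue by naturality: the family $\mu_{A\otimes-}$ is a natural transformation between the two Künneth functors $B\mapsto\K_*^{\mathrm{top}}(G;A\otimes B)$ and $B\mapsto\K_*((A\otimes B)\rtimes_r G)$, and the construction of the comparison maps $\beta_G,\beta$ in \cite[Theorem~3.3]{CEO} is functorial with respect to such transformations. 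Granting this, the Five Lemma shows that $\mu_{A\otimes B}$ is an isomorphism for every separable $B$, which is exactly $(1)$.

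The hard part will be the commutativity of the right-hand square, that is, the compatibility of the canonically constructed maps $\beta_G$ and $\beta$ with the assembly maps. I expect to handle this by verifying that $\mu_{A\otimes-}$ respects the full Künneth-functor structure of \cite{CEO}: homotopy invariance, stability, and the boundary maps of the half-exact long exact sequences in the variable $B$. Compatibility with $\alpha_G,\alpha$ is the assembly-square Proposition, and compatibility with the boundary maps follows from naturality of the Baum--Connes assembly map with respect to semi-split extensions of the coefficient algebra (using \cite[Lemma~4.1]{CEO} to keep the tensored extensions semi-split and \cite[Proposition~5.6]{Tu98} for the long exact sequence in topological $\K$-theory); since $\beta$ is built from these data through a fixed geometric resolution of $B$, the right-hand square follows. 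As a fallback, should this functoriality be inconvenient to invoke directly, I would instead run an argument parallel to Theorem \ref{Theorem:Continuity of top. K-theory} in the variable $B$: the class of separable $B$ for which $\mu_{A\otimes B}$ is an isomorphism contains all $B$ with free $\K$-theory (by the left square exactly as in $(1)\Rightarrow(2)$) and is closed under suspension and the two-out-of-three property for semi-split extensions, so a geometric resolution reduces the general case to the free one.
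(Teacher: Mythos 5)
Your proposal is correct and follows essentially the same route as the paper: both directions rest on the commutative ladder comparing the equivariant Künneth sequence of Proposition \ref{Prop:Equivariant Kunneth sequence} with the Künneth sequence for $A\rtimes_r G$ via the triple $(\mu_A\otimes\id,\ \mu_{A\otimes B},\ \Tor(\mu_A,\id))$, with the converse settled by the Five Lemma exactly as in the paper. The only differences are cosmetic: in $(1)\Rightarrow(2)$ you specialise to $B$ with free abelian $\K$-theory where the paper transfers exactness of the whole column (equivalent via \cite[Proposition~4.2]{CEO}), and you explicitly justify commutativity of the $\beta$-square via naturality of the Künneth-functor construction of \cite{CEO} — a detail the paper simply asserts.
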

	\begin{proof}
		Consider the commutative diagram
		\begin{center}
			\begin{tikzpicture}[description/.style={fill=white,inner sep=2pt}]
			\matrix (m) [matrix of math nodes, row sep=3em,
			column sep=3em, text height=1.5ex, text depth=0.25ex]
			{ 	0 & 0\\
				\K_*^{\mathrm{top}}(G;A)\otimes \K_*(B) & \K_*(A\rtimes_r G)\otimes \K_*(B)\\
				\K_*^{\mathrm{top}}(G;A\otimes B)& \K_*((A\rtimes_r G)\otimes B) \\
				\Tor(\K_*^{\mathrm{top}}(G;A),\K_*(B)) & \Tor(\K_*(A\rtimes_r G), \K_*(B))\\
				0&0\\
			};
			\path[->,font=\scriptsize]
			(m-2-1) edge node[auto] {$ \mu_A\otimes\id $} (m-2-2)
			(m-3-1) edge node[auto] {$ \mu_{A\otimes B} $} (m-3-2)
			(m-4-1) edge node[auto] {$ \Tor(\mu_A,\id) $} (m-4-2)
			(m-1-1) edge node[auto] {$   $} (m-2-1)
			(m-1-2) edge node[auto] {$   $} (m-2-2)
			(m-2-1) edge node[auto] {$ \alpha_G  $} (m-3-1)
			(m-2-2) edge node[auto] {$  \alpha $} (m-3-2)
			(m-3-1) edge node[auto] {$  \beta_G $} (m-4-1)
			(m-3-2) edge node[auto] {$ \beta  $} (m-4-2)
			(m-4-1) edge node[auto] {$   $} (m-5-1)
			(m-4-2) edge node[auto] {$   $} (m-5-2)
			;
			\end{tikzpicture}
		\end{center}
		Since $A\in\mathcal{N}_G$ the left column is exact by Proposition \ref{Prop:Equivariant Kunneth sequence}. Now in the situation of $(1)$, all the horizontal arrows are isomorphisms. Consequently, the right column is also exact, which establishes $(2)$.
		If conversely $A\rtimes_r G\in\mathcal{N}$ and moreover $G$ satisfies the Baum-Connes conjecture with coefficients in $A$, then both columns in the above diagram are exact by Proposition \ref{Prop:Equivariant Kunneth sequence} and \cite[Proposition~4.2]{CEO} respectively. Moreover, the top and bottom horizontal maps are isomorphisms and an application of the Five Lemma completes the proof.
		
	\end{proof}
	Combining Theorem \ref{Theorem:Kunneth} and the preceding proposition we have the following corollary.
	\begin{kor}\label{Corollary:Kunneth}
		Let $G$ be a second countable ample groupoid and $A$ a separable exact $G$-algebra. Suppose the following hold:
		\begin{enumerate}
			\item $A_{\mid K}\rtimes K\in\mathcal{N}$ for all compact open subgroupoids $K\subseteq G$.
			\item $G$ satisfies the Baum-Connes conjecture with coefficients in $A\otimes B$ for all separable $\mathrm{C}^*$-algebras $B$ (with respect to the trivial action on the second factor).
		\end{enumerate}
		Then $A\rtimes_r G\in\mathcal{N}$.
				
		In particular we have $C_r^*(G)\in\mathcal{N}$, provided that $G$ satisfies the Baum-Connes conjecture with coefficients in $C_0(G^{(0)},B)$ for all separable $\mathrm{C}^*$-algebras $B$ (equipped with the trivial action).
	\end{kor}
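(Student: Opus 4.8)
The statement is a direct consequence of the two results just established, so the plan is simply to chain them together. First I would observe that hypothesis~(1), together with the standing assumption that $A$ is separable and exact, is exactly the input required by Theorem~\ref{Theorem:Kunneth}: applying it yields $A\in\mathcal{N}_G$. With this in hand I would then invoke Proposition~\ref{Prop:BCandKunneth}. Hypothesis~(2) is precisely condition~(1) of that proposition (the Baum--Connes conjecture with coefficients in $A\otimes B$ for all separable $B$, with the trivial action on the second factor), and the implication $(1)\Rightarrow(2)$ there gives $A\rtimes_r G\in\mathcal{N}$, which is the first assertion.

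For the \emph{in particular} clause I would specialise to $A=C_0(G^{(0)})$ with its canonical $G$-action, so that $A\rtimes_r G=C_r^*(G)$. Since $G$ is second countable, $C_0(G^{(0)})$ is separable, and being commutative it is nuclear, hence exact; moreover each fibre is $A_u=\CC$, which is type~I. Thus Corollary~\ref{Cor:Kunneth} applies directly and gives $C_0(G^{(0)})\in\mathcal{N}_G$. Alternatively, one verifies hypothesis~(1) by hand: for a compact open subgroupoid $K$ one has $A_{\mid K}\rtimes K=C_0(K^{(0)})\rtimes K=C_r^*(K)$, which is type~I by \cite[Proposition~10.3]{Tu98} and therefore lies in $\mathcal{B}\subseteq\mathcal{N}$. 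Finally, under the canonical identification $A\otimes B=C_0(G^{(0)})\otimes B\cong C_0(G^{(0)},B)$, hypothesis~(2) becomes the stated Baum--Connes condition with coefficients $C_0(G^{(0)},B)$, and Proposition~\ref{Prop:BCandKunneth} delivers $C_r^*(G)\in\mathcal{N}$.

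Since both ingredients are already available, there is essentially no genuine obstacle to overcome; this is purely a matter of assembling the pieces. The only points that require a moment's care are the routine identification $C_0(G^{(0)})\otimes B\cong C_0(G^{(0)},B)$ of minimal tensor products and the verification that the restricted crossed products $C_r^*(K)$ over compact open $K$ lie in $\mathcal{N}$, which is handled by their type~I-ness as above. Both are standard, so the hardest part is really just ensuring the hypotheses of Theorem~\ref{Theorem:Kunneth} and Proposition~\ref{Prop:BCandKunneth} line up verbatim with conditions~(1) and~(2).
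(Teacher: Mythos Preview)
Your proposal is correct and matches the paper's approach exactly: the paper states this corollary without proof, simply noting that it follows by combining Theorem~\ref{Theorem:Kunneth} with Proposition~\ref{Prop:BCandKunneth}, which is precisely the chaining you describe. Your treatment of the \emph{in particular} clause via $A=C_0(G^{(0)})$ and the type~I argument for $C_r^*(K)$ is the natural elaboration of what the paper leaves implicit.
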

	
	Using the above results we can also treat the case of twisted groupoid $\mathrm{C}^*$-algebras. Recall that a \textit{twist} $\Sigma$ over $G$ is a central groupoid extension
	$$G^{(0)}\times\TT\stackrel{i}{\longrightarrow} \Sigma\stackrel{j}{\longrightarrow} G,$$
	by which we mean:
	\begin{enumerate}
		\item The map $i$ is a homeomorphism onto $j^{-1}(G^{(0)})\subseteq \Sigma$,
		\item the map $j$ is a continuous and open surjection, and
		\item the extension is central meaning that $i(r(\sigma),z)\sigma=\sigma i(d(\sigma),z)$ for all $\sigma\in \Sigma$ and $z\in\TT$.
	\end{enumerate}
	Given such a twist $\Sigma$ over $G$ one can define a twisted version of the reduced $\mathrm{C}^*$-algebra of $G$, denoted by $C_r^*(G;\Sigma)$ (see \cite{MR1174207} for the details of this construction).
	\begin{kor}
		Let $G$ be a second countable ample groupoid and $\Sigma$ a twist over $G$. If $G$ satisfies the Baum-Connes conjecture with coefficients, then $C_r^*(G;\Sigma)\in\mathcal{N}$.
	\end{kor}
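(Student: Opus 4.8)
The plan is to reduce the statement to the untwisted situation already handled by Corollary~\ref{Corollary:Kunneth}. The obstruction to viewing $C_r^*(G;\Sigma)$ directly as a crossed product $A\rtimes_r G$ is that the twist encodes a \emph{twisted} action of $G$ on the associated line bundle $\mathcal{L}=\Sigma\times_{\TT}\CC$, and on a field of one-dimensional fibres no genuine $G$-action can produce a non-trivial twist, since $\mathrm{Aut}(\CC)$ is trivial. The standard device to circumvent this is a stabilisation: after tensoring with the compacts the twisted action becomes honest.

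Concretely, I would invoke the groupoid analogue of the Packer--Raeburn stabilisation trick to produce a $G$-algebra $\mathcal{A}$ whose fibres $\mathcal{A}_u$ are isomorphic to $\mathcal{K}(\mathcal H)$ for a fixed separable Hilbert space $\mathcal H$, together with a canonical isomorphism
$$C_r^*(G;\Sigma)\otimes\mathcal{K}\cong \mathcal{A}\rtimes_r G.$$
Since $G$ is second countable and $\TT$ is second countable, $\mathcal{A}$ is separable; as a continuous field of compact operators it is type~I, in particular nuclear and hence exact. The key bookkeeping here is to check that the datum obtained from the twist is a \emph{genuine} groupoid action in the sense of Section~2 (i.e. a $C_0(G)$-linear isomorphism $d^*\mathcal{A}\to r^*\mathcal{A}$ compatible with the multiplication) and that the displayed isomorphism intertwines the relevant structures; this is where essentially all of the work lies.

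Granting this realisation, the two hypotheses of Corollary~\ref{Corollary:Kunneth} are readily verified for $\mathcal{A}$. For hypothesis~(1), each restriction $\mathcal{A}_{\mid K}\rtimes K$ to a compact open subgroupoid $K$ is type~I by \cite[Proposition~10.3]{Tu98} (exactly as in the proof of Corollary~\ref{Cor:Kunneth}), hence lies in the bootstrap class $\mathcal B\subseteq\mathcal{N}$. Hypothesis~(2) is immediate from the assumption that $G$ satisfies the Baum--Connes conjecture with coefficients, applied to the $G$-algebras $\mathcal{A}\otimes B$. Corollary~\ref{Corollary:Kunneth} then yields $\mathcal{A}\rtimes_r G\in\mathcal{N}$, that is, $C_r^*(G;\Sigma)\otimes\mathcal{K}\in\mathcal{N}$.

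Finally I would transfer this back to the non-stabilised algebra. The algebras $C_r^*(G;\Sigma)$ and $C_r^*(G;\Sigma)\otimes\mathcal{K}$ are Morita equivalent, hence $\KK$-equivalent and in particular mutually $\KK$-dominated; since $\mathcal{N}$ is closed under $\KK$-domination (property~(2) of $\mathcal{N}$), we conclude $C_r^*(G;\Sigma)\in\mathcal{N}$, noting that $C_r^*(G;\Sigma)$ is separable because $G$ and $\TT$ are second countable. The main obstacle is thus entirely contained in the stabilisation step: establishing the isomorphism $C_r^*(G;\Sigma)\otimes\mathcal{K}\cong\mathcal{A}\rtimes_r G$ with $\mathcal{A}$ a genuine, separable, exact $G$-algebra with type~I fibres. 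Everything downstream of that realisation is a direct application of the results of this section.
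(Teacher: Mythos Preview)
Your approach is correct and essentially coincides with the paper's. The paper invokes \cite[Proposition~5.1]{VanErp} to obtain a Hilbert $C_0(G^{(0)})$-module $H$ with a genuine $G$-action on $K(H)$ and a Morita equivalence $C_r^*(G;\Sigma)\sim_{\mathrm{Mor}} K(H)\rtimes_r G$; this is exactly the stabilisation step you isolate, so you need not reprove it. From there the paper proceeds just as you do: the fibres $K(H)_u\cong K(H_u)$ are type~I, so Corollary~\ref{Cor:Kunneth} gives $K(H)\in\mathcal{N}_G$, Proposition~\ref{Prop:BCandKunneth} (equivalently your use of Corollary~\ref{Corollary:Kunneth}) gives $K(H)\rtimes_r G\in\mathcal{N}$, and Morita invariance of $\mathcal{N}$ finishes the argument.
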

	\begin{proof}
		It was shown in \cite[Proposition~5.1]{VanErp} that there exists a Hilbert $C_0(G^{(0)})$-module $H$ and an action of $G$ on $K(H)$ such that $C_r^*(G;\Sigma)\sim_{Mor} K(H)\rtimes_r G$. Since each fibre $K(H)_u$ is canonically identified with the usual compact operators $K(H_u)$ on the Hilbert space $H_u$, which is a type I algebra, we have $K(H)\in\mathcal{N}_G$ by Corollary \ref{Cor:Kunneth} and hence $K(H)\rtimes_r G\in\mathcal{N}$ by Proposition \ref{Prop:BCandKunneth}. Since the class $\mathcal{N}$ is stable under Morita-equivalence, the result follows.
	\end{proof}

		Finally, we would like to enlarge the class of groupoids that our results can cover beyond the ample case. To this end we study the behaviour of the class $\mathcal{N}_G$ as $G$ varies.
		
		Let $X$ be a $G$-$H$-equivalence. Le Gall showed in \cite{LeGall,LeGall99} how to construct from an $H$-algebra $A$ a $G$-algebra $X^*A$ and a natural isomorphism  $X^*:\KK^H(A,B)\rightarrow \KK^G(X^*A,X^*B)$.
		\begin{prop}\label{Prop:Equivalence}
			Let $G$ and $H$ be second countable locally compact Hausdorff groupoids. Suppose that $X$ is a $G$-$H$-equivalence. Then $A\in \mathcal{N}_H$ if and only if $X^*A\in \mathcal{N}_G$.
		\end{prop}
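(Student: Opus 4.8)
The plan is to transport the entire construction of the map $\alpha_G$ through Le Gall's equivalence functor $X^*$, exploiting three compatibilities of $X^*$: with Kasparov products, with trivial-coefficient and balanced tensor products, and with the directed systems defining topological $\K$-theory. The proof then reduces to a diagram chase showing that $\alpha_H$ and $\alpha_G$ correspond under isomorphisms.

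First I would check that $X^*$ induces an isomorphism on topological $\K$-theory, namely $X^*\colon\K_*^{\mathrm{top}}(H;A)\xrightarrow{\cong}\K_*^{\mathrm{top}}(G;X^*A)$. The equivalence $X$ sets up a correspondence $Z\mapsto X\times_H Z$ between proper $H$-spaces and proper $G$-spaces, which restricts to a cofinal bijection between $H$-compact subsets of $\mathcal{E}(H)$ and $G$-compact subsets of $\mathcal{E}(G)$ (using that $(X\times_H Z)/G\cong Z/H$ and that the universal proper actions match under the equivalence), and on algebras satisfies $C_0(X\times_H Z)\cong X^*C_0(Z)$. As noted in the proof of Theorem \ref{Theorem:Continuity of top. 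K-theory}, the structure maps of the limit over spaces are given by left Kasparov products; since $X^*$ is a natural isomorphism compatible with the Kasparov product, it intertwines these structure maps and hence passes to an isomorphism of the two limits. I expect this step to be either citable from Le Gall/Tu or a short formal consequence of the functoriality of $X^*$.

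Next I would establish the tensor compatibility $X^*(A\otimes B)\cong X^*A\otimes B$ as $G$-algebras for every $C^*$-algebra $B$ with the trivial action, together with the finer facts that $X^*$ respects the balanced tensor product $\otimes_{(-)^{(0)}}$, sends the unit algebra $C_0(H^{(0)})$ to $C_0(G^{(0)})$, and carries the augmentation class coming from $H\to 1$ to that coming from $G\to 1$. With these identifications in hand, naturality of $X^*$ and its compatibility with the exterior product $\sigma_{(-)}$ give $X^*(\varepsilon_H(x))=\varepsilon_G(x)$ in $\KK^G_*(X^*A,X^*A\otimes B)$ for all $x\in\K_*(B)$, where $\varepsilon_H,\varepsilon_G$ denote the respective $\varepsilon$-maps. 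Feeding this into the definition of $\alpha_Y$ as a left Kasparov product, and again using that $X^*$ preserves products, I obtain at the level of each proper $G$-compact space, and hence in the limit, the commuting square
$$X^*\circ\alpha_H=\alpha_G\circ(X^*\otimes\id).$$
The two vertical maps $X^*\otimes\id$ and $X^*$ are isomorphisms by the first step, so for each fixed $B$ the map $\alpha_H$ is an isomorphism if and only if $\alpha_G$ is. Since $X^*$ is an equivalence, it preserves separability and exactness of the coefficient algebra in both directions (separability is clear, and exactness is checked on fibres via the results of Kirchberg--Wassermann cited above, as $X^*$ identifies the fibres of $X^*A$ with fibres of $A$). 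Running the equivalence over all $B$ with $\K_*(B)$ free abelian then yields $A\in\mathcal{N}_H$ if and only if $X^*A\in\mathcal{N}_G$.

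The main obstacle I anticipate is the second step: verifying carefully that Le Gall's functor commutes with exterior tensoring by a trivially-acted algebra and with the balanced tensor product, and that it carries the augmentation class from $H\to 1$ to that from $G\to 1$, so that the two $\varepsilon$-maps genuinely correspond. Once this bookkeeping is settled, everything else is a formal argument resting on the naturality and multiplicativity of $X^*$ together with the topological $\K$-theory isomorphism of the first step.
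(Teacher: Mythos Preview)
Your proposal is correct and follows essentially the same strategy as the paper: both arguments rest on the commuting square
\[
X^*\circ\alpha_H=\alpha_G\circ(X^*\otimes\id),
\]
which the paper derives from Le~Gall's compatibility of $X^*$ with the Kasparov product \cite[Th\'eor\`eme~6.2.2]{LeGall} together with the identification $X^*(A\otimes B)\cong X^*A\otimes B$ for trivially-acted $B$. The paper's proof is considerably more terse---it calls the tensor identification ``an easy exercise'' and writes down the diagram directly---whereas you spell out the intermediate compatibilities (with $\varepsilon$, with $\sigma$, with the limit defining $\K_*^{\mathrm{top}}$) and the preservation of separability and exactness; these are exactly the details one would fill in, but the underlying argument is the same.
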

		\begin{proof}
			Let $A$ be a separable exact $H$-algebra and $B$ be a separable $\mathrm{C}^*$-algebra with $\K_*(B)$ free abelian. It is an easy exercise to verify that, since $H$ acts trivially on $B$, we have an identification $X^*(A\otimes B)\cong X^*(A)\otimes B$.
			We only need to realize that the following diagram commutes, due to the compatibility of the map $X^*$ with the Kasparov product (see \cite[Theorème~6.2.2]{LeGall}).
			\begin{center}
				\begin{tikzpicture}[description/.style={fill=white,inner sep=2pt}]
				\matrix (m) [matrix of math nodes, row sep=3em,
				column sep=2.5em, text height=1.5ex, text depth=0.25ex]
				{ \K_*^{\mathrm{top}}(H;A)\otimes \K_*(B) & \K_*^{\mathrm{top}}(H;A\otimes B)\\
					\K_*^{\mathrm{top}}(G;X^*A)\otimes \K_*(B) & \K_*^{\mathrm{top}}(G;X^*A\otimes B)\\
				};
				\path[->,font=\scriptsize]
				(m-1-1) edge node[auto] {$ \alpha_H $} (m-1-2)
				(m-2-1) edge node[auto] {$ \alpha_G $} (m-2-2)
				(m-1-1) edge node[auto] {$ X^*\otimes \id  $} (m-2-1)
				(m-1-2) edge node[auto] {$ X^*  $} (m-2-2)
				;
				\end{tikzpicture}
			\end{center}
		\end{proof}
		
		\begin{prop}\label{Prop:TransformationGroupoids}
			Let $G$ be a second countable étale groupoid. Suppose $G$ acts on a (second countable) locally compact Hausdorff space $X$. Let $A$ be a separable exact $G\ltimes X$-algebra.
			Then $A\in \mathcal{N}_{G\ltimes X}$ if and only if $A\in \mathcal{N}_G$.
		\end{prop}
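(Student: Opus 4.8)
The plan is to identify both $\mathcal{N}_{G\ltimes X}$ and $\mathcal{N}_G$ through a natural isomorphism of the two relevant topological $\K$-theories which intertwines the comparison maps $\alpha_{G\ltimes X}$ and $\alpha_G$. First I would pin down in what sense a $G\ltimes X$-algebra is a $G$-algebra: a $G\ltimes X$-algebra is just a $C_0(X)$-algebra carrying a compatible $G$-action, so the anchor map $p\colon X\to G^{(0)}$ turns any $G\ltimes X$-algebra $C$ into a $G$-algebra, its $C_0(G^{(0)})$-structure being $C_0(G^{(0)})\xrightarrow{p^*}C_b(X)\to Z(M(C))$ and its fibre over $u\in G^{(0)}$ being the restriction of the bundle of $C$ to $p^{-1}(u)$. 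This is how the clause ``$A\in\mathcal{N}_G$'' is to be read. Under this identification the reduced crossed products agree, $C\rtimes_r(G\ltimes X)\cong C\rtimes_r G$, and for a trivial action on $B$ the $G\ltimes X$-algebra $A\otimes B$ is carried to the $G$-algebra $A\otimes B$; thus the two instances of the Künneth comparison are built over the same underlying data.

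The heart of the argument is a natural isomorphism $\Theta_C\colon \K_*^{\mathrm{top}}(G\ltimes X;C)\xrightarrow{\cong}\K_*^{\mathrm{top}}(G;C)$ for every $G\ltimes X$-algebra $C$. To produce it I would first identify the universal proper spaces: a proper $G\ltimes X$-space is exactly a proper $G$-space together with a $G$-equivariant map to $X$ (properness of the $G\ltimes X$-action and of the underlying $G$-action coincide), and hence $\mathcal{E}(G\ltimes X)\cong \mathcal{E}(G)\times_{G^{(0)}} X$. Next I would invoke Le Gall's comparison of equivariant $\KK$ for the transformation groupoid with representable $\KK$, $\KK^{G\ltimes X}_*(C_0(Z),C)\cong \RKK^G_*(X;C_0(Z),C)$, and match the two directed systems: over the $G\ltimes X$-compact subsets $Z\subseteq\mathcal{E}(G)\times_{G^{(0)}}X$ on one side, and over the $G$-compact subsets $Y\subseteq\mathcal{E}(G)$ on the other. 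Passing to the limit produces $\Theta_C$; concretely this is the well-known reduction identifying the Baum--Connes assembly map for $G\ltimes X$ with coefficients in $C$ with the assembly map for $G$ with coefficients in the pushforward of $C$ (compatibly with $\mu$).

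With $\Theta$ available, the proof is finished by naturality bookkeeping. Both $\alpha_{G\ltimes X}$ and $\alpha_G$ are assembled from the same ingredients --- the class $\varepsilon$ induced by the projection $G\ltimes X\to 1$ (resp.\ $G\to 1$), the external product $\sigma$, and the internal Kasparov product over the coefficient algebra --- all of which are preserved by the identifications above, using the compatibility of Le Gall's isomorphism with external and internal Kasparov products. I would thus obtain a commuting square
\begin{center}
\begin{tikzpicture}[description/.style={fill=white,inner sep=2pt}]
\matrix (m) [matrix of math nodes, row sep=3em, column sep=2.5em, text height=1.5ex, text depth=0.25ex]
{ \K_*^{\mathrm{top}}(G\ltimes X;A)\otimes\K_*(B) & \K_*^{\mathrm{top}}(G\ltimes X;A\otimes B)\\
  \K_*^{\mathrm{top}}(G;A)\otimes\K_*(B) & \K_*^{\mathrm{top}}(G;A\otimes B)\\ };
\path[->,font=\scriptsize]
(m-1-1) edge node[auto] {$ \alpha_{G\ltimes X} $} (m-1-2)
(m-2-1) edge node[auto] {$ \alpha_{G} $} (m-2-2)
(m-1-1) edge node[auto] {$ \Theta_A\otimes\id $} (m-2-1)
(m-1-2) edge node[auto] {$ \Theta_{A\otimes B} $} (m-2-2)
;
\end{tikzpicture}
\end{center}
in which the two vertical maps are isomorphisms. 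Hence $\alpha_{G\ltimes X}$ is an isomorphism for all $B$ with $\K_*(B)$ free abelian if and only if $\alpha_G$ is, which is precisely the assertion $A\in\mathcal{N}_{G\ltimes X}$ if and only if $A\in\mathcal{N}_G$.

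The step I expect to be the main obstacle is the construction of $\Theta_C$, and specifically the matching of the two inductive limits together with the passage between representable and ordinary equivariant $\KK$-theory (the $\RKK^G_X$-versus-$\KK^G$ subtlety over the pulled-back universal space). Everything downstream of $\Theta$ --- the crossed-product identification, the behaviour of $\,\cdot\otimes B$, and the commutativity of the square --- is routine once the naturality of Le Gall's isomorphism with respect to the Kasparov product is quoted.
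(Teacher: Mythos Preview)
Your proposal is correct and follows essentially the same approach as the paper: both reduce the statement to a commuting square in which the vertical maps are natural isomorphisms $\K_*^{\mathrm{top}}(G\ltimes X;C)\cong\K_*^{\mathrm{top}}(G;C)$ intertwining $\alpha_{G\ltimes X}$ and $\alpha_G$. The only difference is that the paper does not build your $\Theta_C$ by hand; it simply invokes the forgetful map $F_C\colon\K_*^{\mathrm{top}}(G\ltimes X;C)\to\K_*^{\mathrm{top}}(G;C)$ and cites \cite[Theorem~3.8]{Tu12} (valid for \'etale $G$) for the fact that it is an isomorphism, so the step you flag as the main obstacle is already available in the literature and need not be reconstructed via the $\RKK$-comparison and the identification $\mathcal{E}(G\ltimes X)\cong\mathcal{E}(G)\times_{G^{(0)}}X$.
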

		\begin{proof}
			Let $A$ be a separable exact $G\ltimes X$-algebra and $B$ be a separable $\mathrm{C}^*$-algebra with $\K_*(B)$ free abelian. Then we have a commutative diagram
			\begin{center}
				\begin{tikzpicture}[description/.style={fill=white,inner sep=2pt}]
				\matrix (m) [matrix of math nodes, row sep=3em,
				column sep=2.5em, text height=1.5ex, text depth=0.25ex]
				{ \K_*^{\mathrm{top}}(G\ltimes X;A)\otimes \K_*(B) & \K_*^{\mathrm{top}}(G\ltimes X;A\otimes B)\\
					\K_*^{\mathrm{top}}(G;A)\otimes \K_*(B) & \K_*^{\mathrm{top}}(G;A\otimes B)\\
				};
				\path[->,font=\scriptsize]
				(m-1-1) edge node[auto] {$ \alpha_{G\ltimes X} $} (m-1-2)
				(m-2-1) edge node[auto] {$ \alpha_G $} (m-2-2)
				(m-1-1) edge node[auto] {$ F_A\otimes \id  $} (m-2-1)
				(m-1-2) edge node[auto] {$ F_{A\otimes B}  $} (m-2-2)
				;
				\end{tikzpicture}
			\end{center}
			where the vertical maps are given by the forgetful maps. Since $G$ is étale we know from \cite[Theorem~3.8]{Tu12} that the forgetful maps are isomorphisms. Hence the result follows.
		\end{proof}
			
		We will now combine these two cases to get a useful and checkable criterion. Recall, that a continuous groupoid homomorphism $\rho:G\rightarrow H$ is called \textit{faithful}, if the map $g\mapsto (r(g),\rho(g),s(g))$ is injective, and \textit{locally proper}, if the restriction of $\rho$ to the reduction $G_{\mid K}$ is proper for every compact subset $K\subseteq G^{(0)}$.
		\begin{prop}\label{Prop:LocallyProper} Let $G$ be a second countable, locally compact Hausdorff grouopid. Suppose $G$ admits a faithful, locally proper homomorphism $\rho:G\rightarrow H$ into a second countable étale groupoid $H$. Then there exists an $H$-space $Y$ and a $H\ltimes Y$-$G$ equivalence $X$. Moreover, if $A$ is a separable exact $G$-algebra, then $A\in\mathcal{N}_G$ if and only if $X^*A\in \mathcal{N}_{H}$.
		\end{prop}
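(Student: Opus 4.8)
The plan is to construct the equivalence $X$ by hand and then read off the statement as a formal consequence of Proposition \ref{Prop:Equivalence} and Proposition \ref{Prop:TransformationGroupoids}. Writing $\rho_0:=\rho_{\mid G^{(0)}}:G^{(0)}\rightarrow H^{(0)}$ for the map induced on units, I would set
$$X:=\{(h,u)\in H\times G^{(0)} : d(h)=\rho_0(u)\}.$$
As $H^{(0)}$ is Hausdorff, $X$ is closed in $H\times G^{(0)}$, hence a second countable, locally compact Hausdorff space. I equip it with a right $G$-action $(h,u)\cdot g:=(h\rho(g),d(g))$ whenever $r(g)=u$ (this is well defined since $r(\rho(g))=\rho_0(r(g))=\rho_0(u)=d(h)$), with anchor $\tau:X\rightarrow G^{(0)}$, $\tau(h,u)=u$; and with a commuting left $H$-action $k\cdot(h,u):=(kh,u)$ whenever $d(k)=r(h)$, with anchor $\sigma:X\rightarrow H^{(0)}$, $\sigma(h,u)=r(h)$. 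I then define $Y:=X/G$; since the two actions commute, the left $H$-action descends to $Y$ and $\sigma$ descends to an anchor map $Y\rightarrow H^{(0)}$, making $Y$ an $H$-space.

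The heart of the argument is to verify that $X$ is an $H\ltimes Y$--$G$-equivalence. The left $H$-action is just left translation on $H$ restricted to the fibred product over $\rho_0$, so it is automatically free and proper, and the left $H$-orbit of $(h,u)$ is the whole fibre $d^{-1}(\rho_0(u))\times\{u\}$, which is classified by $u$; hence $\tau$ induces a homeomorphism $H\backslash X\cong G^{(0)}$, while $X/G\cong Y$ holds by construction. Freeness of the right $G$-action is precisely where faithfulness enters: if $(h,u)\cdot g=(h,u)$ then $\rho(g)=\rho_0(u)$ and $r(g)=d(g)=u$, so $(r(g),\rho(g),d(g))=(u,\rho_0(u),u)$ coincides with the image of the unit $u$, forcing $g=u$ by injectivity of $g\mapsto(r(g),\rho(g),d(g))$. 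Properness of the right $G$-action is where local properness of $\rho$ is used: one checks that $X\times_{G^{(0)}}G\rightarrow X\times X$, $\big((h,u),g\big)\mapsto\big((h,u),(h,u)\cdot g\big)$ is proper by pulling back a compact set and reducing, via a compactness argument, to properness of $\rho_{\mid G_{\mid K}}$ on suitable reductions $K\subseteq G^{(0)}$. I expect this last point to be the main obstacle.

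With the equivalence established the conclusion is purely formal. By Le Gall's construction $X^*A$ is a separable exact $H\ltimes Y$-algebra, and applying Proposition \ref{Prop:Equivalence} to the $H\ltimes Y$--$G$-equivalence $X$ (with the two groupoids in that statement played by $H\ltimes Y$ and $G$, so that $X^*$ sends $G$-algebras to $H\ltimes Y$-algebras) yields that $A\in\mathcal{N}_G$ if and only if $X^*A\in\mathcal{N}_{H\ltimes Y}$. Since $H$ is étale and acts on $Y$, Proposition \ref{Prop:TransformationGroupoids} gives $X^*A\in\mathcal{N}_{H\ltimes Y}$ if and only if $X^*A\in\mathcal{N}_{H}$. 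Chaining these two equivalences proves that $A\in\mathcal{N}_G$ if and only if $X^*A\in\mathcal{N}_{H}$.
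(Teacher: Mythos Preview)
Your proposal is correct and, for the second part of the statement, follows exactly the paper's route: combine Proposition~\ref{Prop:Equivalence} (applied to the $H\ltimes Y$--$G$-equivalence $X$) with Proposition~\ref{Prop:TransformationGroupoids} (using that $H$ is \'etale). There is nothing to add there.

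For the first part the paper simply cites Anantharaman-Delaroche (and, in the group case, \cite[Theorem~1.8]{MR1900993}) for the existence of $Y$ and the equivalence $X$, whereas you spell out the standard construction $X=H\times_{d,H^{(0)},\rho_0}G^{(0)}$ and set $Y=X/G$. This is precisely the construction underlying the cited references, so the two approaches coincide in substance; you are just unpacking the black box. Your identification of how faithfulness yields freeness of the right $G$-action and how local properness of $\rho$ yields its properness is correct. One small point worth tightening if you keep the explicit version: the left anchor relevant for the $H\ltimes Y$-action is the quotient map $X\to Y$, not $\sigma:X\to H^{(0)}$; you implicitly use this when passing from freeness/properness of the $H$-action to that of the $H\ltimes Y$-action, but it is worth saying so, and worth noting that properness of the $G$-action also guarantees that $Y=X/G$ is locally compact Hausdorff (and second countable), as required for Proposition~\ref{Prop:TransformationGroupoids}.
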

		\begin{proof}
			The first part was proved in \cite{Delaroche} (see also \cite[Theorem~1.8]{MR1900993} for the case that $H$ is a group). The remaining part follows from combining Propositions \ref{Prop:Equivalence} and \ref{Prop:TransformationGroupoids}.
		\end{proof}
		
		So in particular if $\rho:G\rightarrow H$ is a faithful, locally proper homomorphism into an ample groupoid $H$, one can combine the preceding Proposition with Theorem \ref{Theorem:Kunneth} to obtain a criterion to determine whether a given $G$-algebra $A$ is in $\mathcal{N}_G$, even if $G$ is not ample.		
		The following special case of Proposition \ref{Prop:LocallyProper} is worth mentioning:
		\begin{kor}If $H\subseteq G$ is a closed subgroupoid, then $A\in \mathcal{N}_H$ if and only if $Ind_H^{G_{H^{(0)}}} A\in \mathcal{N}_G$.
		\end{kor}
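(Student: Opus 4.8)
The plan is to specialize Proposition~\ref{Prop:LocallyProper} to the inclusion homomorphism $\iota\colon H\hookrightarrow G$; here, as in that proposition, I take $G$ to be second countable étale, so that $\iota$ maps $H$ into an étale groupoid. The first thing to verify is that $\iota$ is faithful and locally proper. Faithfulness is immediate, since the map $h\mapsto(r(h),\iota(h),d(h))=(r(h),h,d(h))$ is injective: its middle coordinate already determines $h$. For local properness, fix a compact set $K\subseteq H^{(0)}$. As $G$ is Hausdorff, $K$ is closed in $G^{(0)}$, and since $H$ is closed in $G$ the reduction $H_{\mid K}=H\cap r^{-1}(K)\cap d^{-1}(K)$ is closed in $G$. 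The inclusion of a closed subset into $G$ is proper---the preimage of a compact set $C\subseteq G$ is the closed subset $H_{\mid K}\cap C$ of $C$, hence compact---so $\iota|_{H_{\mid K}}$ is proper.

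Granting this, Proposition~\ref{Prop:LocallyProper} applied to $\rho=\iota$ furnishes a $G$-space $Y$ and a $(G\ltimes Y)$-$H$ equivalence $X$, and for every separable exact $H$-algebra $A$ it yields the equivalence
$$A\in\mathcal{N}_H\quad\Longleftrightarrow\quad X^*A\in\mathcal{N}_G,$$
where $X^*A$ is the $G$-algebra obtained from Le Gall's equivalence functor, after the forgetful identification $\mathcal{N}_{G\ltimes Y}\cong\mathcal{N}_G$ supplied by Proposition~\ref{Prop:TransformationGroupoids}. All that remains is to identify this $G$-algebra $X^*A$ with the induced algebra $Ind_H^{G_{\mid H^{(0)}}}A$.

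This last identification is the crux of the argument and the step I expect to require the most care. The equivalence $X$ produced by Delaroche's construction for a subgroupoid inclusion is modelled on the space $\{g\in G\mid d(g)\in H^{(0)}\}$, with $G$ acting on the left through $r$ and $H$ acting on the right through $d$; up to the inversion $g\mapsto g^{-1}$ this is exactly the space underlying the induction functor of \cite[Section~3]{1806.00391}. The task is therefore to check that Le Gall's pullback $X^*A$ along this equivalence coincides with the explicit construction of $Ind_H^{G_{\mid H^{(0)}}}A$. I would carry this out by unwinding both definitions on the level of the underlying upper-semicontinuous $\mathrm{C}^*$-bundles over $G^{(0)}$ and comparing the two $G$-actions; by Lemma~\ref{Lem:IsomorphismCriteriumForC(X)-linearHomomorphisms} it suffices to produce a $C_0(G^{(0)})$-linear $\ast$-homomorphism between them which is a fibrewise isomorphism and which intertwines the actions. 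Once $X^*A\cong Ind_H^{G_{\mid H^{(0)}}}A$ as $G$-algebras is established, the displayed equivalence becomes precisely $A\in\mathcal{N}_H\Longleftrightarrow Ind_H^{G_{\mid H^{(0)}}}A\in\mathcal{N}_G$, which is the assertion of the corollary.
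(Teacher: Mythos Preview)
Your approach is essentially identical to the paper's: verify that the inclusion $H\hookrightarrow G$ is faithful and locally proper, apply Proposition~\ref{Prop:LocallyProper}, and then identify the resulting $G$-algebra $X^*A$ with $Ind_H^{G_{H^{(0)}}}A$. The paper is simply more explicit about the middle step and more economical in the last: it notes directly that the equivalence bimodule $X$ is $G_{H^{(0)}}=\{g\in G\mid d(g)\in H^{(0)}\}$ (implementing an equivalence between $G\ltimes G_{H^{(0)}}/H$ and $H$), and then cites \cite[Remark~3.18]{1806.00391} for the identification $(G_{H^{(0)}})^*A=Ind_H^{G_{H^{(0)}}}A$, rather than unwinding the bundles by hand as you propose.
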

		\begin{proof}
			The inclusion map $H\rightarrow G$ is faithful and locally proper. Finally, $G_{H^{(0)}}$ implements the equivalence between $G\ltimes G_{H^{(0)}}/H$ and $H$ and $(G_{H^{(0)}})^*(A)=Ind_H^{G_{H^{(0)}}} A$ (see \cite[Remark~3.18]{1806.00391}), so the result follows from the previous proposition.
		\end{proof}

\section{Applications}\label{Section:Applications}
A natural question is to find examples of $\mathrm{C}^*$-algebras for which Corollary \ref{Corollary:Kunneth} ensures the Künneth formula, without this being a consequence of previous known results \cite{RosenbergKunneth,CEO,OY4}. This is actually not an easy question (in the second author's opinion). The first class of examples we give are actually also consequences of a combination of previous results, but they are obtained here in a straightforward way using the Going-Down principle for groupoids. The authors thought they were interesting in themselves, and allow to illustrate the power of our results (or at least to compare it to other methods). Genuine new applications will be provided by uniform Roe algebras of spaces which admit a coarse embedding into a Hilbert space, and the uniform maximal Roe algebra of spaces which admit a fibred coarse embedding. This has, to our knowledge, not appeared in the literature before, and is an easy application of results. 

\subsection{Some examples}

Let $\Gamma$ an infinite hyperbolic property (T) group, and $\Omega$ a second countable locally compact Hausdorff space equipped with an action of $\Gamma$ by homeomorphisms. Then the action groupoid $\Gamma\ltimes\Omega$ is étale. 
By Lafforgue's work in \cite{lafforgue2012conjecture}, $\Gamma$ satisfies the Baum-Connes conjecture with coefficients, so \cite[Corollary~0.2]{MR1966758}
ensures that so does $\Gamma\ltimes\Omega$. By Proposition \ref{Prop:TransformationGroupoids} we have $A\in \mathcal{N}_{\Gamma\ltimes\Omega}$ if and only if $A\in \mathcal{N}_\Gamma$. Hence an application of Theorem \ref{Theorem:Kunneth} together with Proposition \ref{Prop:BCandKunneth} ensures the following:\\
\paragraph{\textbf{Application 1}} If $\Gamma$ is an infinite hyperbolic property (T) group, $\Omega$ a $\Gamma$-space, and $A$ is a $\Gamma\ltimes\Omega$-algebra such that $A\rtimes_r F$ satisfies the Künneth formula for every finite subgroup $F$ of $\Gamma$, then $A \rtimes_r (\Gamma\ltimes\Omega)$ satisfies the Künneth formula.

This result is not new and is a consequence of \cite{CEO} applied to $\Gamma$ once we realize that $A\rtimes_r (\Gamma\ltimes\Omega)\cong A\rtimes_r \Gamma$.
\\

Recall the following construction from \cite{MR1911663}. Let $\Gamma$ be a finitely generated residually finite group, and $\mathcal N = \lbrace N_i\rbrace_i$ a decreasing family of nested finite index normal subgroups, i.e. $N_{i+1} < N_i $, $[\Gamma, N_i]<\infty$ and $\bigcap N_i =\{e_\Gamma\}$. Following \cite{MR3549528}, we define the HLS groupoid (after Higson, Lafforgue, and Skandalis) associated to $(\Gamma,\mathcal N)$ as the bundle of groups over the one point compactification $\overline{\N}=\N\cup \{\infty\}$ as follows:
\begin{itemize}
\item[$\bullet$] if $n\in\N$, $\Gamma_n= \Gamma / N_n $,
\item[$\bullet$] $\Gamma_\infty=\Gamma$,
\item[$\bullet$] each fibre over $\NN$ is endowed with the discrete topology, and a neighbourhood basis of $(\infty,g)$ is given by   
\[ \mathcal V_{N} = \{(n,g_n) : n\geq N, \pi_n(g_n) = g\}.\]
\end{itemize}
This defines an ample groupoid $G_\mathcal{N}(\Gamma)$, and the exact sequence
\[ 0 \rightarrow \oplus \C[\Gamma_n] \rightarrow C_c(G_\mathcal{N}(\Gamma)) \rightarrow \C [\Gamma]  \rightarrow 0\]      
induces the following exact sequence of $C^*$-algebras
\[ 0 \rightarrow \oplus \C[\Gamma_n] \rightarrow C_r^*(G_\mathcal{N}(\Gamma)) \rightarrow C^*_{\mathcal N} (\Gamma)  \rightarrow 0,\]   
where $C^*_{\mathcal N}(\Gamma)$ is the completion of $\C[\Gamma]$ w.r.t. to the norm
\[||x||_{\mathcal N} = \sup_{N\in \mathcal N} ||\lambda_{N} (x)||\quad x\in \C[\Gamma] \]
induced by the quasi-regular representations $\lambda_{N} : C_{max}^*(\Gamma) \rightarrow \mathcal L(l^2(\Gamma/ N))$.  \\ 

Now this exact sequence intertwines the Baum-Connes assembly maps, and the Baum-Connes conjecture for $G_{\mathcal N}(\Gamma)$ is equivalent to $\mu_{\Gamma,\mathcal N}$ being an isomorphism. \\

\paragraph{\textbf{Application 2}} 
\begin{itemize}
\item[$\bullet$] If $\Gamma= \mathbb F_2$ and 
\[N_n = \bigcap \ker(\phi) \]
for $\phi$ running across all group homomorphisms from $\Gamma$ to a finite group of cardinality at most $n$, then $C_{\mathcal N}^*(\Gamma) \cong C_{max}^*(\Gamma)$ (see \cite[Lemma~2.8]{MR3549528}). Since $G_\mathcal{N}(\FF_2)$ is an ample groupoid satisfying the Baum-Connes conjecture we get that $C_r^*(G_\mathcal{N}(\FF_2))$ satisfies the Künneth formula by Corollary \ref{Corollary:Kunneth}. It is still a result that one can get using the fact that $\FF_2$ being a-T-menable, it is $\K$-amenable. Hence $C^*_{max}(\FF_2)$ and $C_r^*(\FF_2)$ are $\KK$-equivalent and in the class $\mathcal{N}$, so that $C_r^*(G_\mathcal{N}(\FF_2))$ also is by extension stability of $\mathcal{N}$. A remark of R. Willett is worth mentioning: $\mathbb F_2$ being the fundamental group of the wedge of two circles, it is $\KK$-equivalent to $C(\mathbb S^1 \wedge \mathbb S^1)$.

\item[$\bullet$] One can artificially try to get rid of bootstrapiness by spatially tensoring this exact sequence by $C_r^*(\Lambda)$ for a infinite hyperbolic property (T) group $\Lambda$. Since hyperbolic groups are exact we get the extension
\[ 0 \rightarrow \oplus \C[\Gamma_n] \otimes_{min} C_r^*(\Lambda) \rightarrow C_r^*(G_\mathcal{N}(\Gamma)\times \Lambda) \rightarrow C^*_{\mathcal N} (\Gamma)\otimes_{min} C_r^*(\Lambda)   \rightarrow 0.\]
Our methods apply to the groupoid $G_{\mathcal N}(\Gamma)\times\Lambda$, and imply that its reduced $C^*$-algebra satisfies the Künneth formula. But then again, one can deduce this from a previous result, namely the restriction principle for groups. Indeed, apply it to $\Lambda$ acting trivially on $C_r^*(G_\mathcal{N}(\Gamma))$.
\end{itemize} 


\subsection{Uniform and maximal Roe algebras}




Let $X$ be a discrete metric space with bounded geometry, which means that for every given $R>0$, there is a uniform bound on the cardinality on $R$-balls in $X$, i.e. 
\[\sup_{x\in X} |B(x,R)| < \infty \quad \forall R>0.\]
Motivated by index theory in the setting of non-compact Riemanian manifolds, J. Roe introduced a $C^*$-algebra $C^*(X)$, now called the Roe algebra of $X$. It comes in different versions, which are all completions of the $*$-algebra of locally compact finite propagation operators, of which we now recall the definition. If $H$ is an auxiliary separable Hilbert space and $R>0$, define 
\[ \C_R[X] = \{T\in\mathcal B(l^2(X)\otimes H)\text{ s.t. } T_{xy}\in \mathfrak K(H) \text{ and } T_{xy} = 0 \text{ if } d(x,y)>R \} \]
as a closed subspace of $\mathcal B(l^2(X)\otimes H)$. Then 
\[\C[X] = \cup_{R>0} \C_R[X]\]
is a $*$-algebra, and
\begin{itemize}
\item[$\bullet$] if $H=\C$, the completion of $\C[X]$ inside $\mathcal B(l^2(X))$ is called the \textit{uniform Roe algebra} $C_u^*(X)$ of $X$,
\item[$\bullet$] if $H=l^2(\N)$, the completion of $\C[X]$ inside $\mathcal B(l^2(X)\otimes H)$ is called the \textit{Roe algebra} $C^*(X)$  of $X$,
\item[$\bullet$] the envelopping $C^*$-algebra of $\C[X]$ is called the \textit{maximal Roe algebra} $C_{max}^*(X)$  of $X$.
\end{itemize}
In \cite{STY02}, G. Skandalis, J.-L. Tu and G. Yu define a locally compact Hausdorff ample groupoid $G(X)$, called the \textit{coarse groupoid} of $X$, such that the convolution algebra $C_c(G(X))$ is $*$-isomorphic to $\C[X]$, and 
\begin{itemize}
\item[$\bullet$] $C^*_r(G(X)) \cong C_u^*(X)$,
\item[$\bullet$] $l^\infty_X=l^\infty(X, \mathfrak K(H))$ is a $G(X)$-algebra and $C^*(X)\cong l^\infty_X \rtimes_r G(X)$,
\item[$\bullet$] $C_{max}^*(G(X)) \cong C_{max}^*(X)$.
\end{itemize}
An interesting feature of the coarse groupoid is that its dynamical properties reflect the coarse properties of $X$. Indeed,
\begin{itemize}
\item[$\bullet$] $X$ has property A iff $G(X)$ is amenable iff $G(X)$ is inner exact (see \cite[Theorem~5.3]{STY02} and \cite[Theorem~3.20]{bonicke_li_2018}),
\item[$\bullet$] $X$ coarsely embeds into a Hilbert space iff $G(X)$ is a-T-menable \cite[Theorem~5.4]{STY02},
\item[$\bullet$] $X$ admits a fibred coarse embedding into a Hilbert space iff $G(X)_{|\partial \beta X }$ is a-T-menable (see \cite{FinnSellFibred}). Here, $\partial \beta X$ is the complement of $X$ in $\beta X$, hence is a closed $G(X)$-invariant subset of $\beta X$.
\end{itemize}
Fibred coarse embeddings were introduced by Chen, Wang and Yu in \cite{ChenWangYu}. Admitting a fibred coarse embedding is weaker than admitting a coarse embedding: For instance any box space of an a-T-menable group admits a fibred coarse embedding, hence one gets examples of expanders which admit fibred coarse embeddings.\\

We want to study, when the different Roe-type algebras defined above satisfy the Künneth formula. First we focus on the uniform Roe-algebra. Suppose $X$ is a bounded geometry metric space, which admits a coarse embedding into a Hilbert space.
One would wish to apply the results of the present work directly, but the problem is that $G(X)$ is not second countable, so that the $\mathrm{C}^*$-algebras above are not separable. However, by \cite[Theorem~5.4]{STY02} there exists a second countable a-T-menable groupoid $G'$ such that $G(X)=G'\ltimes \beta X$. Moreover, we may write $\beta X$ as an inverse limit $\lim\limits_{\longleftarrow} Y_i$, such that:
\begin{itemize}
\item Each $Y_i$ is a metrizable quotient of $\beta X$.
\item The action of $G'$ on $\beta X$ factors through an action of $G'$ on $Y_i$, making the quotient map $G'$-equivariant.
\item For each $i\leq j$ the canonical map $Y_j\rightarrow Y_i$ is $G'$-equivariant and surjective.
\end{itemize}
Consequently, the coarse groupoid $G(X)$ can be written as a projective limit of second countable ample groupoids. 

\begin{satz}\label{Theorem:Uniform Roe-algebra}
		Let $X$ be a discrete metric space with bounded geometry, which admits a coarse embedding into a Hilbert space. Then $C_u^*(X)$ satisfies the Künneth formula.
	\end{satz}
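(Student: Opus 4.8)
The plan is to realise the non-separable algebra $C_u^*(X)$ as a filtered inductive limit of reduced $\mathrm{C}^*$-algebras of second countable \emph{ample} groupoids, each of which is handled by Corollary \ref{Corollary:Kunneth}, and then to transport the Künneth sequence through the limit.

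First I would use \cite{STY02} to identify $C_u^*(X)\cong C_r^*(G(X))$, and write $G(X)=G'\ltimes\beta X$ for the second countable a-T-menable groupoid $G'$ of \cite[Theorem~5.4]{STY02}, so that $C_r^*(G(X))\cong C(\beta X)\rtimes_r G'$. Using the presentation $\beta X=\varprojlim_i Y_i$ recalled above, the $Y_i$ being metrizable (totally disconnected) $G'$-spaces with $G'$-equivariant surjective connecting maps, the induced maps $C(Y_i)\hookrightarrow C(Y_j)$ form an inductive system of separable exact $G'$-algebras with \emph{injective}, $G'$-equivariant connecting $\ast$-homomorphisms and limit $C(\beta X)$. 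Since $G'$ is étale, the argument of Lemma \ref{Lemma:Proper Groupoids and inductive limits}(1) (which works verbatim for this filtered system) yields
$$C_u^*(X)\cong C(\beta X)\rtimes_r G'=\varinjlim_i\bigl(C(Y_i)\rtimes_r G'\bigr)=\varinjlim_i C_r^*(G'\ltimes Y_i),$$
a filtered inductive limit with injective connecting maps.

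Next I would show each building block lies in $\mathcal{N}$. Every $G'\ltimes Y_i$ is a second countable ample groupoid, as recorded in the discussion preceding the theorem. Because $G'$ is a-T-menable, so is the transformation groupoid $G'\ltimes Y_i$, and hence it satisfies the Baum-Connes conjecture with coefficients in \emph{every} $G'\ltimes Y_i$-algebra by \cite{Tu99} (cf. \cite[Lemma~10.6]{Tu98}); in particular with coefficients in $C_0((G'\ltimes Y_i)^{(0)},B)=C(Y_i,B)$ for every separable $B$. The ``in particular'' statement of Corollary \ref{Corollary:Kunneth}---whose hypothesis on compact open subgroupoids is automatic here, since all fibres of $C(Y_i)$ are $\CC$ and hence type I---then gives $C_r^*(G'\ltimes Y_i)\in\mathcal{N}$ for every $i$.

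Finally I would pass to the limit, exploiting that the Künneth formula is stable under filtered inductive limits with injective connecting homomorphisms. Indeed, continuity of $\K$-theory gives $\K_*(C_u^*(X))=\varinjlim_i\K_*(C_r^*(G'\ltimes Y_i))$; the minimal tensor product with a fixed $B$ commutes with such limits and preserves injectivity, so also $\K_*(C_u^*(X)\otimes B)=\varinjlim_i\K_*(C_r^*(G'\ltimes Y_i)\otimes B)$; and both $\otimes$ and $\mathrm{Tor}$ of abelian groups commute with filtered colimits. Taking the colimit of the exact Künneth sequences for the $C_r^*(G'\ltimes Y_i)$ therefore produces the canonical Künneth sequence for $C_u^*(X)$. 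This is the analogue of property (5) of the class $\mathcal{N}$, the one genuine difference being that the limit $C_u^*(X)$ is itself non-separable. That non-separability is exactly where I expect the main difficulty: the core results of the paper are stated for second countable groupoids and separable algebras, so the whole argument rests on approximating the non-second-countable coarse groupoid $G(X)$ by the groupoids $G'\ltimes Y_i$ and on verifying that the \emph{exact sequence}, rather than mere membership in the separable class $\mathcal{N}$, survives the colimit.
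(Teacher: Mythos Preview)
Your proposal is correct and follows essentially the same route as the paper: write $C_u^*(X)\cong C(\beta X)\rtimes_r G'$ as a filtered inductive limit $\varinjlim_i C(Y_i)\rtimes_r G'$ with injective connecting maps, verify the K\"unneth formula for each stage via Corollary~\ref{Corollary:Kunneth}, and then pass to the limit using continuity of $\K$-theory, of $\otimes_{\min}$, and of $\mathrm{Tor}$. The only cosmetic difference is that the paper applies Corollary~\ref{Corollary:Kunneth} to the fixed ample groupoid $G'$ with coefficient algebra $C(Y_i)$ (checking that $C(Y_i)_{\mid K}\rtimes K$ is the $\mathrm{C}^*$-algebra of a compact groupoid for each compact open $K\subseteq G'$), whereas you pass to the transformation groupoid $G'\ltimes Y_i$ and invoke the ``in particular'' clause; since $C(Y_i)\rtimes_r G'\cong C_r^*(G'\ltimes Y_i)$ this amounts to the same thing.
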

	\begin{proof}
		With the notations above, we get an inductive system $(C(Y_i))_i$ of separable $G'$-algebras with injective and $G'$-equivariant connecting homomorphisms. Hence $(C(Y_i)\rtimes_r G')_i$ is an inductive system with injective connecting maps as well and we can apply Lemma \ref{Lemma:Proper Groupoids and inductive limits} to obtain
		$$C_u^*(X)=C_r^*(G(X))=C(\beta X)\rtimes_r G'=\lim\limits_{\longrightarrow} C(Y_i)\rtimes_r G'.$$
		Now $G'$ is a second countable ample groupoid and satisfies the Baum-Connes conjecture for all coefficients by \cite{Tu98}. If $K\subseteq G'$ is a compact open subgroupoid, then $C(Y_i)_{\mid K}\rtimes K=C_r^*(K\ltimes(Y_i)_{\mid K})$ is the $\mathrm{C}^*$-algebra of a compact groupoid, and hence contained in the class $\mathcal{N}$. By Corollary \ref{Corollary:Kunneth} it follows that $C(Y_i)\rtimes_r G'$ satisfies the Künneth formula for all $i$, i.e. for every $\mathrm{C}^*$-algebra $B$ we obtain canonical short exact sequences
		$$0\rightarrow \K_*(C(Y_i)\rtimes_r G')\otimes \K_*(B)\rightarrow \K_*((C(Y_i)\rtimes_r G')\otimes B)\rightarrow \Tor(\K_*(C(Y_i)\rtimes_r G'),\K_*(B))\rightarrow 0.$$
		Since the connecting maps of the directed system $(C(Y_i)\rtimes_r G')_i$ are all injective by construction, we can apply \cite[II.9.6.6]{MR2188261} to get $$\lim\limits_{\longrightarrow}(C(Y_i)\rtimes_r G')\otimes B=C_u^*(X)\otimes B.$$
		As the (algebraic) tensor product functor and the Tor functor commute with inductive limits, and $\mathrm{K}$-theory is continuous, we obtain a short exact sequence
		$$0\rightarrow \K_*(C_u^*(X))\otimes \K_*(B)\rightarrow \K_*(C_u^*(X)\otimes B)\rightarrow \Tor(\K_*(C_u^*(X)),\K_*(B))\rightarrow 0,$$
		in the limit, as desired.
	\end{proof}

	We can also blend Application 1 and the preceding result: If $\Gamma$ is a discrete group, then it has only one coarse class of left invariant metric, and we denote by $|\Gamma|$ the associated coarse metric space. It is shown in \cite{STY02} that the coarse groupoid is an action groupoid, more precisely \[G(|\Gamma|) \cong \beta |\Gamma| \rtimes \Gamma.\]
	This groupoid is ample and satisfies the Baum-Connes conjecture with coefficients, provided that $\Gamma$ is embeds coarsely into a Hilbert space (for example if $\Gamma$ is exact).
	
	\paragraph{\textbf{Application 3}} If $\Gamma$ is a hyperbolic group and $A$ is a $C^*$-algebra such that $A\rtimes_r F$ satisfies the Künneth formula for every finite subgroup $F$ of $\Gamma$, then $A\rtimes_r G(\betrag{\Gamma})$ satisfies the Künneth formula. In particular the uniform Roe algebra \[l^\infty(\Gamma)\rtimes_r \Gamma\cong C_u^* (\Gamma)\] satisfies the Künneth formula.

\begin{satz}\label{Theorem: Maximal Roe-algebra}
		Let $X$ be a discrete metric space with bounded geometry, which admits a fibred coarse embedding into a Hilbert space. Then $C_{max}^*(X)$ satisfies the Künneth formula.
\end{satz}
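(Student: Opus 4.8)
The plan is to follow the strategy of Theorem \ref{Theorem:Uniform Roe-algebra}, but to first split off the part of the coarse groupoid responsible for the failure of amenability. Writing $C^*_{max}(X)\cong C^*_{max}(G(X))$ and using that $\partial\beta X$ is a closed $G(X)$-invariant subset of $\beta X$ with open complement $X$, I would use that the \emph{maximal} groupoid $\mathrm C^*$-algebra functor is exact to obtain a short exact sequence
$$0\longrightarrow \mathfrak K\longrightarrow C^*_{max}(X)\longrightarrow C^*_{max}(G(X)_{|\partial\beta X})\longrightarrow 0 .$$
Here the ideal $C^*_{max}(G(X)_{|X})$ is identified with the compact operators $\mathfrak K$, since the interior restriction $G(X)_{|X}$ is an amenable pair groupoid and its maximal and reduced algebras agree. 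As $\mathfrak K$ lies in the bootstrap class $\mathcal B$ it satisfies the Künneth formula; assuming the extension is semi-split, a standard comparison of Künneth sequences then reduces the theorem to proving that the quotient $C^*_{max}(G(X)_{|\partial\beta X})$ satisfies the Künneth formula.

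For the quotient I would feed in the coarse-geometric hypothesis. By \cite{FinnSellFibred} the existence of a fibred coarse embedding is equivalent to a-T-menability of $G(X)_{|\partial\beta X}=G'\ltimes\partial\beta X$, which (as $\partial\beta X$ is compact and $G'$ is $\sigma$-compact) is a $\sigma$-compact groupoid. Since a-T-menable groupoids are $\K$-amenable, the maximal and reduced crossed products are $\KK$-equivalent, and because satisfying the Künneth formula is a $\KK$-invariant it is enough to treat the reduced algebra $C^*_r(G(X)_{|\partial\beta X})=C(\partial\beta X)\rtimes_r G'$.

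Now I would pass to the projective limit $\partial\beta X=\varprojlim_i W_i$ of metrizable $G'$-quotients already used in the uniform case, so that $C(\partial\beta X)=\varinjlim_i C(W_i)$ with injective $G'$-equivariant connecting maps, whence Lemma \ref{Lemma:Proper Groupoids and inductive limits} gives $C^*_r(G(X)_{|\partial\beta X})=\varinjlim_i\bigl(C(W_i)\rtimes_r G'\bigr)$ with injective connecting homomorphisms. For each $i$ the algebra $C(W_i)\rtimes_r G'=C^*_r(G'\ltimes W_i)$ is the reduced $\mathrm C^*$-algebra of a second countable ample groupoid, and I would show it lies in $\mathcal N$ by the ``in particular'' part of Corollary \ref{Corollary:Kunneth}: the compact open subgroupoid condition holds because $C(W_i)_{\mid K}\rtimes K$ is the algebra of a compact groupoid, hence type I and in $\mathcal B\subseteq\mathcal N$, while the required Baum-Connes condition comes from a-T-menability of $G'\ltimes W_i$. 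Taking the inductive limit of the resulting Künneth sequences exactly as in the proof of Theorem \ref{Theorem:Uniform Roe-algebra} (the injectivity of the connecting maps ensuring that $\otimes B$ passes to the limit, while $\Tor$ and $\K$-theory are continuous) then shows that $C^*_r(G(X)_{|\partial\beta X})$ satisfies the Künneth formula.

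The hard part is the very last input: a-T-menability, and hence the Baum-Connes conjecture with coefficients, is furnished by \cite{FinnSellFibred} only for the non-second-countable groupoid $G'\ltimes\partial\beta X$, and it does not descend automatically to the second countable quotients $G'\ltimes W_i$, because a-T-menability is inherited by subgroupoids but not by quotient actions. The crux of the argument is therefore to choose the approximating tower $\{W_i\}$ compatibly with the proper conditionally negative type function coming from the fibred coarse embedding — exploiting that this function is controlled on the sets of bounded propagation exhausting the groupoid — so that each $G'\ltimes W_i$ (or at least a cofinal subsystem) is itself a-T-menable. Two smaller points also need attention: the semi-splitness of the interior extension (so that tensoring by $B$ preserves exactness), which I expect to obtain from nuclearity of $\mathfrak K$ together with the étale structure, and the use of $\K$-amenability for the $\sigma$-compact, rather than second countable, boundary groupoid.
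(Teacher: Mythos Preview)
Your approach matches the paper's proof essentially step for step: the same extension coming from $\partial\beta X\subseteq\beta X$ (the paper writes the ideal as $C^*_{max}(X\times X)$, type I, rather than $\mathfrak K$, but this is cosmetic), the same appeal to a-T-menability of the boundary groupoid via \cite{FinnSellFibred}, the same passage from maximal to reduced via $\K$-amenability, and the same inverse-limit argument borrowed from Theorem~\ref{Theorem:Uniform Roe-algebra}. The worries you flag in your final paragraph --- descent of a-T-menability (or of Baum--Connes) to the second-countable quotients $G'\ltimes W_i$, semi-splitness of the extension, and $\K$-amenability in the merely $\sigma$-compact setting --- are not addressed in the paper's proof either: it simply writes ``the same argument as in the previous result ensures that $C^*_r(G(X)_{|\partial\beta X})$ satisfies the K\"unneth formula'' and ``the K\"unneth formula is stable by extension,'' so you are being more scrupulous than the published argument rather than missing something it supplies.
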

\begin{proof}
The closed saturated subset $\partial \beta X$ gives rise to the following exact sequence of $C^*$-algebras
\[0 \rightarrow C_{max}^*(X\times X) \rightarrow C^*_{max}(G(X)) \rightarrow C^*_{max}(G(X)_{|\partial \beta X}) \rightarrow 0.\] 
The groupoid $X\times X$ being proper, the $C^*$-algebra on the left side is of type I and satisfies the Künneth formula. By the above, 
\[G(X)_{|\partial \beta X} = \partial \beta X \rtimes G' =\lim\limits_{\longleftarrow} X_i\cap G', \]
where $X_i$ is the image of $\partial \beta X$ under the $G(X)$-equivariant quotient map $\beta X \rightarrow Y_i$.\\

By hypothesis, $X$ admits a fibred embedding into a Hilbert space, and hence $G(X)_{|\partial \beta X}$ is a-T-menable. The same argument as in the previous result ensures that $C^*_{r}(G(X)_{|\partial \beta X})$ satisfies the Künneth formula. But a-T-menable groupoids have $\K$-amenable $\mathrm{C}^*$-algebras by \cite{Tu98}. It follows that $C^*_{max}(G(X)_{|\partial \beta X})$ satisfies the Künneth formula. 
The Künneth formula is stable by extension, which concludes the proof.
\end{proof}

\section{Stability result}\label{Section:Stability}
We will end this paper with an extension of the main result. Indeed, the class of second countable ample groupoids can be used as a starting point of an inductively defined class of groupoids whose $\mathrm{C}^*$-algebras satisfy the Künneth formula. This class is directly inspired from the class of finite decomposition complexity for groups \cite{GTY}. \\

Next we state a strengthening of Corollary \ref{Corollary:Kunneth}, which is proved in the second author's thesis \cite{DellAieraThesis}. It says that the Künneth morphism $\alpha_{A\rtimes_r G,B}$ comes from a controlled morphism $\hat \alpha_{A\rtimes_r G,B}$, which is a quantitative isomorphism.

\begin{satz}[Theorem $5.2.13$ \cite{DellAieraThesis}]
Let $G$ be a ($\sigma$-compact) second countable ample groupoid and $A$ a separable and exact $G$-algebra. Suppose that 
\begin{itemize}
\item[$\bullet$] $G$ satisfies the Baum-Connes conjecture with coefficients in $A\otimes B $ for every separable trivial $G$-algebra $B$,
\item[$\bullet$] for every compact open subgroupoid $K$ of $G$, $A_{|K}\rtimes_r K \in \mathcal N$.
\end{itemize} 
Then $A\rtimes_r G$ satisfies the quantitative Künneth formula.
\end{satz}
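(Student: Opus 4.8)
The plan is to reproduce the strategy of the proof of Corollary~\ref{Corollary:Kunneth} at the level of controlled (quantitative) $\K$-theory, promoting each isomorphism that appears there to a quantitative isomorphism in the sense of Oyono-Oyono and Yu. Concretely, I would first construct a controlled Künneth morphism $\hat\alpha_{A\rtimes_r G,B}$ lifting $\alpha_{A\rtimes_r G,B}$, together with controlled refinements $\hat\alpha_G$ and $\hat\alpha_Y$ of the maps $\alpha_G$ and $\alpha_Y$ built in Section~\ref{Section:MixedKunneth}, and check that applying the forgetful maps (letting the control parameters tend to their limits) recovers the uncontrolled morphisms. The target statement then amounts to showing that $\hat\alpha_{A\rtimes_r G,B}$ admits a controlled inverse up to a controlled rescaling of the propagation and norm-control parameters, for every separable $B$ with $\K_*(B)$ free abelian. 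Passing to the limit one recovers the ordinary isomorphism $\alpha_{A\rtimes_r G,B}$, and hence the ordinary Künneth formula by \cite[Proposition~4.2]{CEO}.

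Next I would set up a \emph{controlled Going-Down machine}. For each $H\in\mathcal{S}(G)$ I define controlled functors $\hat{\mathcal{F}}_H$ and $\hat{\mathcal{G}}_H$ refining the functors $\mathcal{F}_H(C_0(Y))=\KK^H_*(C_0(Y),A)\otimes\K_*(B)$ and $\mathcal{G}_H(C_0(Y))=\KK^H_*(C_0(Y),A\otimes B)$ used in the proof of Theorem~\ref{Theorem:Kunneth}, arranged so that their limits over $G$-compact $Y\subseteq\mathcal{E}(G)$ recover the controlled topological $\K$-theory. The maps $\hat\alpha_Y$ then assemble into a controlled Going-Down transformation $\hat\Lambda$. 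The crux is a quantitative refinement of Theorem~\ref{Theorem:Going-Down Theorem}: if $\hat\Lambda_K(C(K^{(0)}))$ is a quantitative isomorphism for every compact open subgroupoid $K\subseteq G$, then so is $\hat\Lambda(G)$. I would prove this by following the inductive scheme of Theorem~\ref{Theorem:Going-Down Theorem} — the base case $X=GU$, a Mayer-Vietoris step covering $G$-compact spaces by finitely many such pieces, an induction on the dimension of a typed $G$-simplicial complex, and a final zig-zag over $\mathcal{E}(G)$ — but with every short and long exact sequence replaced by its controlled six-term analogue and the Five Lemma replaced by the controlled Five Lemma, tracking the uniform rescaling of the control parameters at each stage.

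The base case is where the two hypotheses enter. On a compact open subgroupoid $K$ the groupoid is ``bounded'', so the filtration defining the controlled $\K$-theory of $A_{\mid K}\rtimes_r K$ and of its tensor products is eventually exhausting, and the controlled groups collapse to the ordinary ones; consequently $\hat\alpha$ reduces to the honest Künneth map $\alpha_{A_{\mid K}\rtimes_r K,B}$. Since $A_{\mid K}\rtimes_r K\in\mathcal{N}$ by hypothesis, this map is an isomorphism, which — through the controlled Green-Julg identification already exploited in Theorem~\ref{Theorem:Continuity of top. K-theory} — makes $\hat\Lambda_K(C(K^{(0)}))$ a quantitative isomorphism. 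Feeding this into the controlled Going-Down theorem yields that $\hat\alpha_G$ is a quantitative isomorphism. Finally I would assemble the controlled analogue of the commutative square in Proposition~\ref{Prop:BCandKunneth}, using a controlled descent/assembly map $\hat\mu$: the Baum-Connes hypothesis, in the quantitative form developed in \cite{DellAieraThesis}, forces $\hat\mu_{A\otimes B}$ to be a quantitative isomorphism, so a controlled diagram chase transports the quantitative isomorphism $\hat\alpha_G$ across to $\hat\alpha_{A\rtimes_r G,B}$, completing the proof.

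The main obstacle will be the bookkeeping of the control parameters through the Going-Down induction. Unlike the ordinary Five Lemma, the controlled Five Lemma degrades the control constants, so the induction on simplicial dimension must be arranged to yield \emph{uniform}, dimension-independent rescaling functions; otherwise the limit statement is vacuous. Establishing controlled Mayer-Vietoris and controlled six-term exact sequences for $\hat{\mathcal{F}}_H$ and $\hat{\mathcal{G}}_H$ with compatible control, together with the multiplicativity of the controlled assembly map with respect to the controlled Kasparov and external products used to define $\hat\alpha$, is the technical heart of the argument — and is precisely what is carried out in \cite{DellAieraThesis}.
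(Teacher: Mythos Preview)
Your proposal is correct and matches the paper's approach: the paper does not give a detailed proof but cites \cite{DellAieraThesis} and remarks that ``the proof of the quantitative K\"unneth formula is essentially the same as the classical one; one just has to use the controlled version of every morphism involved, and has to keep track of the propagation at every step,'' which is precisely the programme you outline. Your identification of the bookkeeping of control parameters through the Going-Down induction as the technical crux is accurate and is indeed what is carried out in \cite{DellAieraThesis}.
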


Quantitative $\K$-theory was developed by H. Oyono-Oyono and G. Yu in \cite{OY2}, and its application to the Künneth formula in \cite{OY4}. The main topic of the first author's thesis \cite{DellAieraThesis} was a generalization of operator quantitative $\K$-theory, called controlled $\K$-theory, which allows to state that crossed products $A\rtimes_r G$ are $C^*$-algebras which are filtered by the set of symmetric compact subsets $K\subseteq G$. One can then study the controlled $\K$-theory group $\hat \K_*(A\rtimes_r G)$, which approximates $\K_*(A\rtimes_r G)$ in a precise sense. We refer the reader to \cite{DellAieraThesis} or \cite{dell2017controlled} for more details. The proof of the quantitative Künneth formula is essentially the same as the classical one. One just has to use the controlled version of every morphism involved, and has to keep track of the propagation at every step. The quantitative Künneth formula essentially means that the morphism $\alpha_{A\rtimes_r G, B}$ is induced by a controlled morphism $\hat\alpha_{A\rtimes_r G , B}$.\\

In \cite{OY4}, H. Oyono-Oyono and G. Yu introduced the class $C_{fand}$ of finite asymptotic nuclear dimensional $\mathrm{C}^*$-algebras, and show \cite[Proposition~5.6]{OY4} that every member of this class satisfies the Künneth formula. To define this class, we first need to recall what is a filtered $\mathrm{C}^*$-algebra, and a controlled Mayer-Vietoris pair.

\begin{defi}
A coarse structure is a poset $\mathcal E$ equipped with an abelian semi group structure such that, for any two elements $E,E'\in \mathcal E$, there exists an element $F\in \mathcal E$ such that $E\leq F$ and $E'\leq F$. A $C^*$-algebra $A$ is said to be $\mathcal E$-filtered if there exists a family $\{A_E \}_{E\in \mathcal E}$ of closed self-adjoint subspaces of $A$ such that:
\begin{itemize}
\item[$\bullet$] $A_E \subseteq A_{E'}$ if $E\leq E'$,
\item[$\bullet$] $A_E . A_{E'} \subseteq A_{EE'}$,
\item[$\bullet$] $\cup_{E\in \mathcal E} A_E$ is dense in $A$.
\end{itemize} 
If $A$ is unital, we impose that $1\in A_E$ for every $E\in \mathcal E$.
\end{defi} 

Examples of filtered $\mathrm{C}^*$-algebras include Roe algebras associated to proper metric spaces with bounded geometry, crossed-products of $\mathrm{C}^*$-algebras by action by automorphisms of \'etale groupoids or discrete quantum groups. See \cite[Chapter~3]{DellAieraThesis} or \cite{dell2017controlled} for details. Any sub-$\mathrm{C}^*$-algebra $B$ of $A$ is considered filtered by the family $\{B\cap A_E\}_{E\in \mathcal E}$. If $A$ and $A'$ are $\mathcal E$-filtered, then $A\cap A'$ is considered filtered by the family $\{A_E\cap A'_E\}_{E\in \mathcal E}$. \\

To a $\mathcal E$-filtered $\mathrm{C}^*$-algebra $A$, one can associate its controlled $\K$-theory groups $\hat \K_*(A)$ which is a family of groups 
\[\{ \K^{\varepsilon, E}_*(A) \}_{\varepsilon\in (0, \frac{1}{4}), E\in \mathcal E}\] 
satisfying nice compatibility conditions and approximating the $\K$-theory groups $\K_*(A)$. A controlled morphism $\hat\phi = \{\phi_{\varepsilon, E}\}$ is a family of morphisms
\[ \phi_{\varepsilon, E} : \K^{\varepsilon, E}_*(A) \rightarrow \K^{\alpha\varepsilon, h_\varepsilon .E}_*(B) \quad 
\forall \varepsilon \in (0, \frac{1}{4\alpha}), E\in \mathcal E \]
where $\alpha \leq 1$ is a fixed constant, and $h$ is a nondecreasing function. The point is that the way the propagation, i.e. the parameters are distorted, is uniform accross the family. The family must satisfy compatibility conditions we do not recall here in order to keep a reasonable length for the article (and the details of controlled $\K$-theory are not essential for the proof). Forgetting the propagation, any controlled morphism $\hat \phi$ induces a morphism in $\K$-theory  
\[ \phi : \K_*(A) \rightarrow \K_*(B) . \]
One of the interest of controlled $\K$-theory lies in its computability. For instance, Mayer-Vietoris type exact sequences occur even if the filtered $\mathrm{C}^*$-algebra is simple. More precisely, in \cite{OY4} is developed a notion of controlled Mayer-Vietoris pair, which we now recall.

\begin{defi}
Let $A$ be a $\mathcal E$-filtered $\mathrm{C}^*$-algebra, $c\geq 1$ and $F\in \mathcal E$. A $F$-controlled Mayer-Vietoris pair with coercivity $c$ is a quadruple $(V_0, V_1, A^{(0)}, A^{(1)})$:
\begin{itemize}
\item[$\bullet$] the $V_i$'s are closed subspaces of $A_F$,
\item[$\bullet$] $A^{(i)}$ is a $\mathrm{C}^*$-algebra containing \[ V_i + A_{F'} V_i + V_i A_{F'}  + A_{F'} V_i A_{F'}\]
with $F' = F^5$,
\item[$\bullet$] for every $E\leq F$, every $x\in M_n(A_E)$ can be written as a sum \[x=x_0+x_1\] where $x_i\in M_n(V_i \cap A_E)$ and $|| x_i|| \leq c||x||$,
\item[$\bullet$] for every $\varepsilon>0$, $E\leq F$ and every $\varepsilon$-close elements $x\in A_E^{(0)}$ and $y\in A_E^{(1)}$, i.e.
\[|| x-y || < \varepsilon,\]
there exists $z\in M_n( A_E^{(0)}\cap A_E^{(1)})$ such that \[ ||x-z|| < c\varepsilon \quad \text{and} \quad ||y-z|| < c\varepsilon .\]
\end{itemize}
If $\mathcal A$ and $\mathcal B$ are two families of $\mathcal E$-filtered $\mathrm{C}^*$-algebras, we say that $\mathcal A$ $2$-decomposes over $\mathcal B$ if there exists a constant $c\geq 1$ such that, for every $A\in\mathcal A$, and every $E\in \mathcal E$, there exists a controlled Mayer-Vietoris pair $(V_0, V_1, A^{(0)}, A^{(1)})$ with coercivity $c$ with $A^{(0)}$, $A^{(1)}$ and $A^{(0)} \cap A^{(1)}$ belonging to $\mathcal B$.
\end{defi} 

If in possession of a controlled Mayer-Vietoris pair $(V_0, V_1, A^{(0)}, A^{(1)})$ for a filtered $\mathrm{C}^*$-algebra $A$, \cite[Theorem~3.10]{OY4} allows to compute its controlled $\K$-theory in terms of the controlled $\K$-theory of the sub-$\mathrm{C}^*$-algebras $A_i$. See \cite{OY4,DellAieraThesis,dell2017controlled} for precise definitions about controlled morphisms and controlled exact sequences. 

\begin{satz}
For every $\mathcal E$-filtered $\mathrm{C}^*$-algebra $A$, $E\in \mathcal E$ and every $E$-controlled Mayer-Vietoris pair $(V_0, V_1, A^{(0)}, A^{(1)})$, there exists a controlled sequence
\[\begin{tikzcd}
 \hat \K_*( A^{(0)}\cap A^{(1)} ) \arrow{r} & \hat \K_*(A^{(0)}) \oplus \hat \K_*(A^{(1)}) \arrow{r} & \hat \K_*(A) \arrow{d} \\ 
 \hat \K_*(A) \arrow{u} & \hat \K_*(A^{(0)}) \oplus \hat \K_*(A^{(1)}) \arrow{l} & \hat \K_*( A^{(0)}\cap A^{(1)} ) \arrow{l}
\end{tikzcd}\]
which is controlled-exact up to order $E$.  
\end{satz}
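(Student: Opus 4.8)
The plan is to follow the classical six-term Mayer--Vietoris argument for the $\K$-theory of a pullback/decomposition square and to promote every step to the controlled setting, keeping careful track of how the propagation degree $E$ and the precision $\varepsilon$ are distorted; this is exactly the strategy of \cite[Theorem~3.10]{OY4}, which I would reproduce in the present filtered framework. First I would fix the three maps other than the boundary. The arrows $\hat\K_*(A^{(0)}\cap A^{(1)})\to\hat\K_*(A^{(0)})\oplus\hat\K_*(A^{(1)})$ and $\hat\K_*(A^{(0)})\oplus\hat\K_*(A^{(1)})\to\hat\K_*(A)$ are induced by the inclusions of filtered subalgebras, with a sign on one summand so that compositions cancel. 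Since an inclusion of filtered subalgebras preserves both filtration degree and precision, these are controlled morphisms with rescaling constant $\alpha=1$ and trivial propagation distortion, so they require nothing beyond functoriality of $\hat\K_*$.

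The real content is the construction of the boundary maps $\partial\colon\hat\K_*(A)\to\hat\K_{*-1}(A^{(0)}\cap A^{(1)})$. Here I would start from a class in $\K^{\varepsilon,E}_*(A)$, represented by an $\varepsilon$-$E$-quasi-unitary (resp.\ quasi-idempotent) $x\in M_n(A_E)$, and invoke the third defining property of an $E$-controlled Mayer--Vietoris pair to split $x=x_0+x_1$ with $x_i\in M_n(V_i\cap A_E)$ and $\norm{x_i}\le c\norm{x}$. Since $V_i\subseteq A^{(i)}$, this realises the representative as a controlled sum of pieces living in $A^{(0)}$ and $A^{(1)}$. I would then perform a controlled clutching construction: using Whitehead-type elementary manipulations in $M_{2n}$ one assembles from $x_0,x_1$ a quasi-idempotent (resp.\ quasi-unitary) over $A^{(0)}\cap A^{(1)}$ whose controlled class is $\partial$ of the original. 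The products of filtered elements involved push the propagation from $E$ up to $E^5$, which is precisely the enlarged degree $F'$ appearing in the definition, while the norm estimates degrade $\varepsilon$ by a factor depending only on $c$.

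With the maps in hand I would verify controlled exactness at each of the three positions, up to order $E$, by a controlled diagram chase. At $\hat\K_*(A)$ the decomposition property again shows that every controlled class over $A$ is, up to controlled error, the image of a pair of classes over $A^{(0)}$ and $A^{(1)}$. At the middle term $\hat\K_*(A^{(0)})\oplus\hat\K_*(A^{(1)})$ I would use that two classes with equal controlled image in $\hat\K_*(A)$ are represented by $\varepsilon$-close elements, and then the fourth defining property of the pair, namely that any $\varepsilon$-close pair from $A^{(0)}$ and $A^{(1)}$ admits a common $c\varepsilon$-approximant in $A^{(0)}\cap A^{(1)}$, produces a preimage in $\hat\K_*(A^{(0)}\cap A^{(1)})$ up to controlled error. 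Exactness at $\hat\K_*(A^{(0)}\cap A^{(1)})$ follows because the composite of $\partial$ with the two inclusion-induced maps is controlled-null directly from the clutching construction.

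The hard part will be neither the definitions of the maps nor the abstract shape of the hexagon, but the uniform propagation and precision bookkeeping: one must check that each application of coercivity, of the common-approximant property, and of the Whitehead manipulations distorts $\varepsilon$ by a factor controlled solely by $c$ and enlarges the filtration degree by at most the fixed rule $E\mapsto E^5$, uniformly across the whole family $\{\K^{\varepsilon,E}_*\}$. Making this precise is exactly the content of the controlled-exactness formalism of \cite{OY4}, and the complete verification in the present filtered setting is carried out in \cite{DellAieraThesis}; I would cite these for the detailed estimates rather than reproduce them.
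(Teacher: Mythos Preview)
Your sketch is a reasonable outline of how the controlled Mayer--Vietoris sequence is actually established, and it correctly identifies the two substantive ingredients: the clutching construction for the boundary map using the decomposition $x=x_0+x_1$, and the common-approximant property for exactness at the middle term. However, you should be aware that the paper does \emph{not} prove this theorem at all. It is stated there purely as a citation of \cite[Theorem~3.10]{OY4}, with no argument given; the authors simply import the result in order to use it for Proposition~\ref{stability}. So there is no ``paper's own proof'' to compare against, and your proposal is in effect a summary of the Oyono-Oyono--Yu argument rather than a reconstruction of anything in this paper. In that light, your final paragraph---deferring the detailed propagation and precision estimates to \cite{OY4} and \cite{DellAieraThesis}---is exactly the right move and matches what the paper itself does.
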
  

This result allows H. Oyono-Oyono and G. Yu to prove a permanence result \cite[Theorem~4.12]{OY4}.

\begin{satz}
Let $A$ be a $\mathcal E$-filtered $\mathrm{C}^*$-algebra. If for every $E\in \mathcal E$ there exists a $E$-controlled Mayer-Vietoris pair $(V_0, V_1, A^{(0)}, A^{(1)})$ such that $A^{(0)}$, $A^{(1)}$ and $A^{(0)} \cap A^{(1)}$ satisfy the quantitative Künneth formula then $A$ satisfies the quantitative Künneth formula.   
\end{satz}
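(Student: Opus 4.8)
The plan is to run a controlled Mayer--Vietoris comparison: from the given pair I produce a controlled Mayer--Vietoris pair for the tensor product $A\otimes B$, I assemble the controlled Künneth maps $\hat\alpha_{\cdot,B}$ into a morphism between the two resulting six-term controlled-exact sequences, and I finish with a controlled Five Lemma. Throughout, \emph{satisfying the quantitative Künneth formula} for a filtered $\mathrm{C}^*$-algebra $D$ means that the controlled external product induces a controlled morphism $\hat\alpha_{D,B}\colon \hat{\K}_*(D)\otimes\K_*(B)\to \hat{\K}_*(D\otimes B)$ which is a controlled isomorphism.

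First I would fix a $\mathrm{C}^*$-algebra $B$ and, by the quantitative analogue of \cite[Proposition~4.2]{CEO}, reduce to the case $\K_*(B)$ free abelian, so that $-\otimes\K_*(B)$ is exact and preserves controlled exactness up to order $E$. I would then observe that $A\otimes B$ is $\mathcal E$-filtered by $\{A_E\otimes B\}_{E\in\mathcal E}$, and check that, given an $E$-controlled Mayer--Vietoris pair $(V_0,V_1,A^{(0)},A^{(1)})$ for $A$, the quadruple $(V_0\otimes B, V_1\otimes B, A^{(0)}\otimes B, A^{(1)}\otimes B)$ is an $E$-controlled Mayer--Vietoris pair for $A\otimes B$ with the same coercivity $c$. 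This is the first technical point: the decomposition axiom and the $\varepsilon$-closeness axiom for $A$ must be transported across $-\otimes B$, which works because the relevant estimates are norm estimates preserved by the minimal tensor product, and because the subspaces in play satisfy $A^{(0)}\otimes B\cap A^{(1)}\otimes B=(A^{(0)}\cap A^{(1)})\otimes B$.

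Next I would apply the preceding theorem to both $A$ (with the given pair) and to $A\otimes B$ (with the tensored pair), obtaining two six-term sequences that are controlled-exact up to order $E$. Tensoring the $A$-sequence with the free abelian group $\K_*(B)$ preserves controlled exactness, and I would verify that $\hat\alpha_{\cdot,B}$ defines a controlled morphism from the tensored $A$-sequence to the $A\otimes B$-sequence; that is, that $\hat\alpha$ is natural for the inclusion-induced maps and commutes up to controlled homotopy with the controlled boundary maps produced by the controlled Mayer--Vietoris construction. This naturality is inherited from the definition of $\hat\alpha$ as a controlled external product and from the compatibility of the controlled index and exponential maps with inclusions of filtered algebras.

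Finally, by hypothesis $\hat\alpha_{A^{(0)},B}$, $\hat\alpha_{A^{(1)},B}$ and $\hat\alpha_{A^{(0)}\cap A^{(1)},B}$ are controlled isomorphisms, and in the segment
\[\cdots\to \hat{\K}_*(A^{(0)}\cap A^{(1)}) \to \hat{\K}_*(A^{(0)})\oplus\hat{\K}_*(A^{(1)}) \to \hat{\K}_*(A) \to \hat{\K}_*(A^{(0)}\cap A^{(1)}) \to \cdots\]
these occupy four of the five positions flanking $\hat{\K}_*(A)$; a controlled Five Lemma then forces $\hat\alpha_{A,B}$ to be a controlled isomorphism up to a controlled rescaling of $(\varepsilon,E)$. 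Since $E\in\mathcal E$ is arbitrary and the coercivity $c$ (hence the rescaling functions) is uniform, letting $E$ exhaust $\mathcal E$ shows $\hat\alpha_{A,B}$ is a controlled isomorphism, so $A$ satisfies the quantitative Künneth formula. I expect the main obstacle to be precisely this controlled Five Lemma: unlike the classical case, exactness holds only up to order $E$ with a prescribed distortion of $\varepsilon$ and $E$, so the diagram chase must be carried out quantitatively, keeping the degradation of parameters uniform and bounded so that the output is a genuine controlled isomorphism rather than merely an isomorphism after passing to the limit in $\K$-theory.
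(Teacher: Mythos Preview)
The paper does not supply its own proof of this statement: it is quoted verbatim as \cite[Theorem~4.12]{OY4} and used as a black box. So there is no in-paper argument to compare against.

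That said, your outline is the standard route and matches the strategy in \cite{OY4}: tensor the controlled Mayer--Vietoris pair with $B$, run the controlled six-term sequence on both sides, and apply a controlled Five Lemma with $\hat\alpha_{\cdot,B}$ as the comparison map. Two remarks. First, the reduction to $\K_*(B)$ free abelian is not how the quantitative Künneth formula is typically packaged in \cite{OY4}; there the statement is formulated directly as $\hat\alpha_{A,B}$ being a controlled isomorphism for all $B$ with free $\K$-theory, so your ``reduction'' is really just the definition rather than an additional step. Second, your identification $A^{(0)}\otimes B\cap A^{(1)}\otimes B=(A^{(0)}\cap A^{(1)})\otimes B$ and the transport of the coercivity axioms across $-\otimes B$ are correct but deserve a word of care: the $\varepsilon$-closeness axiom involves matrix amplifications, and one must check that the approximating element $z$ can be found in $M_n((A^{(0)}\cap A^{(1)})\otimes B)$ with the same constant $c$, which follows from a partition-of-unity or completely positive approximation argument in the minimal tensor product. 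You have correctly flagged the controlled Five Lemma as the genuine technical core; the uniform control of the rescaling pair $(\alpha,h)$ independently of $E$ is exactly what makes the conclusion a \emph{controlled} isomorphism rather than merely an isomorphism in ordinary $\K$-theory.
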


Let $\mathcal E$ be a coarse structure. A $\mathcal E$-filtered $\mathrm{C}^*$-algebra $A$ is said to be \textit{locally bootstrap} if, for every $E\in \mathcal E$, there exists $F\in \mathcal E$ and a sub-$\mathrm{C}^*$-algebra $A^{(F)}$ of $A$, which is in the bootstrap class $\mathcal B$ and satisfies
\[A_E \subseteq A^{(F)}\subseteq A_F. \]
Notice the following property: a locally bootstrap $\mathrm{C}^*$-algebra is automatically in the bootstrap class. It is indeed an inductive limit of $\mathrm{C}^*$-algebras in the bootstrap class. Denote by $C_{fand}^{(0)}$ the class of locally bootstrap $\mathrm{C}^*$-algebras. Then, a $\mathrm{C}^*$-algebra $A$ belongs to the class $C^{(n+1)}_{fand}$ if it is $2$-decomposable over $C_{fand}^{(n)}$. \\

The asymptotic nuclear dimension of $A$ is the smaller $n$ such that $A$ belongs to $C^{(n)}_{fand}$, and we denote by $C_{fand}$ the class of $\mathrm{C}^*$-algebras with finite asymptotic nuclear dimension,
\[ C_{fand}  = \cup_{n\geq 0} C_{fand}^{(n)}.\]

The two previous result combines in the main result of \cite{OY4}.
\begin{satz}
Let $A$ be a filtered $\mathrm{C}^*$-algebra with finite asymptotic nuclear dimension. Then $A$ satisfies the Künneth formula. \end{satz}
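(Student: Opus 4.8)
The plan is to prove, by induction on the asymptotic nuclear dimension $n$, the \emph{a priori} stronger statement that every $A\in C_{fand}^{(n)}$ satisfies the quantitative Künneth formula; the ordinary Künneth formula for an arbitrary $A\in C_{fand}=\bigcup_{n\geq0}C_{fand}^{(n)}$ will then follow by forgetting the propagation. Recall that the quantitative Künneth formula asserts that the Künneth morphism $\alpha_{A,B}$ is induced by a controlled isomorphism $\hat\alpha_{A,B}$ for every $B$ with $\K_*(B)$ free abelian. Since any controlled morphism induces an honest morphism in $\K$-theory, and a controlled isomorphism induces an isomorphism $\K_*(A)\otimes\K_*(B)\to\K_*(A\otimes B)$ in the limit, the characterization of the Künneth formula recalled earlier (\cite[Proposition~4.2]{CEO}) will yield the conclusion of the theorem from the quantitative statement.

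For the base case $n=0$ let $A\in C_{fand}^{(0)}$ be locally bootstrap. The key point is that the controlled $\K$-theory group $\K^{\varepsilon,E}_*(A)$ only involves elements of propagation at most $E$, that is, elements of $A_E$. By hypothesis $A_E$ is contained in a sub-$C^*$-algebra $A^{(F)}$ lying in the bootstrap class $\mathcal B$, for which the ordinary Künneth morphism is an isomorphism by Rosenberg--Schochet. I would exploit the compatibility of $\hat\alpha_{A,B}$ with the inclusion $A^{(F)}\hookrightarrow A$ to factor the controlled Künneth map at scale $E$ through the Künneth isomorphism of $A^{(F)}$, and thereby check that $\hat\alpha_{A,B}$ is a controlled isomorphism with distortion bounded uniformly in $E$. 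This establishes the quantitative Künneth formula for locally bootstrap algebras and, in particular, recovers that every member of $C_{fand}^{(0)}$ lies in $\mathcal B$.

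For the inductive step, suppose the claim holds for $C_{fand}^{(n)}$ and let $A\in C_{fand}^{(n+1)}$. By definition $A$ $2$-decomposes over $C_{fand}^{(n)}$, so for every $E\in\mathcal E$ there is an $E$-controlled Mayer--Vietoris pair $(V_0,V_1,A^{(0)},A^{(1)})$ of coercivity bounded independently of $E$, with $A^{(0)}$, $A^{(1)}$ and $A^{(0)}\cap A^{(1)}$ all in $C_{fand}^{(n)}$. By the inductive hypothesis these three algebras satisfy the quantitative Künneth formula, so the permanence result stated above (\cite[Theorem~4.12]{OY4}) applies and shows that $A$ satisfies the quantitative Künneth formula. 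Running the induction over all $n$ covers $C_{fand}$, completing the argument.

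The routine parts — the inductive step being a direct invocation of the cited permanence theorem, and the passage from the quantitative to the ordinary Künneth formula being the limit/forgetful argument — are not where the difficulty lies. The main obstacle is the base case: one must verify that for a locally bootstrap algebra the controlled Künneth morphism is a genuine \emph{controlled} isomorphism, i.e.\ that the isomorphisms furnished by the bootstrap subalgebras $A^{(F)}$ assemble compatibly across scales with uniformly controlled propagation distortion, rather than merely scale by scale. Making this uniformity precise, using the compatibility relations between the groups $\K^{\varepsilon,E}_*$, is the technical heart of the proof.
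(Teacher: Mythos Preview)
Your proposal is correct and matches the argument the paper has in mind. Note, however, that the paper does not actually prove this theorem: it is quoted as the main result of \cite{OY4} (specifically \cite[Proposition~5.6]{OY4}), and the only ``proof'' the paper offers is the one-line remark that ``the two previous results combine'' to give it---precisely your inductive scheme, with \cite[Theorem~4.12]{OY4} supplying the inductive step and the locally bootstrap hypothesis handling the base case. Your identification of the base case as the technical crux is accurate, but since the paper defers entirely to \cite{OY4} for that verification, there is nothing further to compare.
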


As an application, H. Oyono-Oyono and G. Yu prove that the uniform Roe algebra of a coarse space with finite asymptotic dimension satisfies the Künneth formula. We have already generalized this result in section \ref{Section:Applications}, but using their ideas we can push even further, as we will see at the end of this section.

One crucial example of controlled Mayer Vietoris pair is given by any decomposition of the base space of an \'etale groupoid with compact base space. Let $G$ be such a groupoid and $U^0$ and $U^1$ two open subsets in $G^{(0)}$ such that
\[G^{(0)} = U^0 \cup U^1. \]
Recall (\cite[Chapter~3]{DellAieraThesis},\cite{dell2017controlled}) that the set $\mathcal E$ of symmetric compact subsets of $G$ is a coarse structure with respect to which $C_r^*(G)$ is filtered by the family of subspaces 
\[C_E(G) = \{ f\in C_c(G) \text{ s.t. supp}(f)\subseteq E  \}\]
indexed by $E\in \mathcal E$.\\

For any open subset $U\subseteq G$, define $U_E$ to be the partial orbit of $U$ by $E$, i.e. $s(int(E)^U)$, and $G^{(E)}_U$ to be the groupoid generated by $G_{|U}\cap E$. Then $U_E$ is an open subset of $G^{(0)}$ and $G^{(E)}_U$ is an open subgroupoid of $G$.\\
 
Given the decomposition $G^{(0)}= U^0 \cap U^1$, set $F_i$ to be the closed subspace $C_0(G_{U^i})\cap C_E(G)$, and $A_i$ to be $C_r^*(G_{U^i_E}^{(E)})$, then 
\[\left( \ F_0 \ , \ F_1 \ , \ A_0 \ , \ A_1 \  \right)\]
is a $E$-controlled Mayer-Vietoris pair for $C_r^*(G)$. \\

Suppose now that a groupoid can be decomposed in such a way at every order into subgroupoids whose reduced $\mathrm{C}^*$-algebra satisfies the Künneth formula. The previous permanence result shows that the reduced $\mathrm{C}^*$-algebra still satisfies the Künneth formula. 

\begin{prop} \label{stability} Let $G$ be an \'etale groupoid such that, for every symmetric compact subset $E\subseteq G$, there exists a decomposition 
\[G^{(0)} = U^0 \cup U^1\]
such that $C^*_r(G_{U^0_E}^{(E)})$, $C^*_r(G_{U^1_E}^{(E)})$  and $C^*_r(G_{U^0_E}^{(E)}) \cap C^*_r(G_{U^1_E}^{(E)})$ satisfy the quantitative Künneth formula, then so does $C^*_r(G)$.
\end{prop}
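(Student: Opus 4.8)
The plan is to verify that the hypotheses of the Oyono-Oyono--Yu permanence theorem are met by the filtered structure on $C_r^*(G)$ described immediately before the statement. Recall that the set $\mathcal{E}$ of symmetric compact subsets of $G$ is a coarse structure with respect to which $C_r^*(G)$ is filtered by the subspaces $C_E(G)=\lbrace f\in C_c(G)\ \text{s.t. supp}(f)\subseteq E\rbrace$. The strategy is therefore: given an arbitrary $E\in\mathcal{E}$, produce an $E$-controlled Mayer-Vietoris pair $(V_0,V_1,A^{(0)},A^{(1)})$ whose three constituent algebras $A^{(0)}$, $A^{(1)}$ and $A^{(0)}\cap A^{(1)}$ all satisfy the quantitative Künneth formula, and then invoke the permanence result verbatim to conclude that $C_r^*(G)$ itself satisfies the quantitative Künneth formula. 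Since the quantitative Künneth formula implies the ordinary one, this gives the desired statement.

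First I would fix a symmetric compact $E\subseteq G$ and apply the hypothesis to obtain a decomposition $G^{(0)}=U^0\cup U^1$ with $C^*_r(G_{U^0_E}^{(E)})$, $C^*_r(G_{U^1_E}^{(E)})$ and their intersection satisfying the quantitative Künneth formula. Next I would recall the explicit construction described just above the statement: set $F_i$ to be the closed subspace $C_0(G_{U^i})\cap C_E(G)$ and $A_i=C_r^*(G_{U^i_E}^{(E)})$. The excerpt asserts that the quadruple $(F_0,F_1,A_0,A_1)$ is an $E$-controlled Mayer-Vietoris pair for $C_r^*(G)$, so I would simply cite this fact. The one genuinely new point to check is that the intersection algebra appearing in the permanence theorem, $A^{(0)}\cap A^{(1)}=C^*_r(G_{U^0_E}^{(E)})\cap C^*_r(G_{U^1_E}^{(E)})$, is exactly the object whose quantitative Künneth property is assumed in the hypothesis of the proposition; I would note that the filtration on the intersection is the one induced by $\lbrace (A_0)_{E'}\cap(A_1)_{E'}\rbrace_{E'\in\mathcal{E}}$ as prescribed earlier, so the hypothesis applies directly.

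With the controlled Mayer-Vietoris pair in hand and the Künneth property verified for all three algebras $A^{(0)}$, $A^{(1)}$, $A^{(0)}\cap A^{(1)}$, the conclusion follows at once from the permanence theorem stated above (the one asserting that if for every $E\in\mathcal{E}$ there is an $E$-controlled Mayer-Vietoris pair whose three algebras satisfy the quantitative Künneth formula, then so does $A$). Applying this with $A=C_r^*(G)$ completes the argument. The main obstacle, such as it is, lies not in any deep estimate but in the bookkeeping of the controlled framework: one must make sure that the coercivity constant $c$ and the distortion of the propagation parameters are uniform across all $E\in\mathcal{E}$, and that the subspaces $V_i=F_i$ together with the subalgebras $A_i$ genuinely satisfy the four defining conditions of a controlled Mayer-Vietoris pair (in particular the decomposition and $\varepsilon$-closeness approximation conditions). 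Since the excerpt has already asserted that $(F_0,F_1,A_0,A_1)$ forms such a pair, I would treat this verification as established and confine the proof to assembling the cited ingredients into the permanence theorem.
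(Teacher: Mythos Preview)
Your proposal is correct and matches the paper's approach exactly: the paper does not give a separate proof of this proposition but presents it as an immediate consequence of the permanence theorem (Oyono-Oyono--Yu, \cite[Theorem~4.12]{OY4}) together with the observation, made just before the statement, that a decomposition $G^{(0)}=U^0\cup U^1$ yields the $E$-controlled Mayer-Vietoris pair $(F_0,F_1,A_0,A_1)$. Your write-up simply spells out this assembly of ingredients, which is precisely what the paper intends.
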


This leads us to introduce the following notion.

\begin{defi}
Let $\mathcal G$ and $\mathcal H$ be two families of \'etale open subgroupoids of a fixed \'etale groupoid $W$. \\

We say that $\mathcal G$ is $d$-decomposable over $\mathcal F$ if, for every groupoid $G$ in $\mathcal G$, every symmetric compact subset $E\subseteq G$, there exists a covering of $E^{(0)} = s(E)=r(E)$ by $d+1$ open subsets 
\[E^{(0)} = U_0 \cup ... \cup U_d \] such that the groupoids generated by $G_{|U_i} \cap E$ all belong to the class $\mathcal H$.\\

Let $\mathcal C$ be a family of open sugroupoids of $G$. The coarse family generated by $\mathcal C$ is the minimal family of subgroupoids of $G$ containing $\mathcal C$ which is stable by $2$-decomposition.
\end{defi}

The reason why we are keeping some flexibility on the number $d$ is due to the connection of this notion of decomposition to the \textit{dynamical asymptotic dimension} of an \'etale groupoid introduced in \cite{GWY} by E. Guentner, R.Willett and G. Yu. Unravelling the definition, one gets that the dynamical asymptotic dimension of $G$ is less than $d$ iff $\{G\}$ $d$-decomposes over the class of its relatively compact open subgroupoids. We will however only use the decomposition with $d=2$ in this paper. In that case, we can relate spaces of finite decomposition complexity with coarse groupoids being in the coarse family generated by compact \'etale groupoids. FDC was introduced in \cite{GTY}, and studied in detail in \cite{GTY2}. We define here a relative version of FDC for groupoids, starting with any initial class of subgroupoids. A similar notion was defined in \cite{GWY2} for group action: an action of a countable discrete group $\Gamma$ on a compact Hausdorff space $X$ is said to have \textit{finite decomposition complexity} if the action groupoid $X\rtimes \Gamma$ belongs to the class coarsely generated by its relatively compact open subgroupoids in our sense.

\begin{prop}
Let $X$ be a countable discrete metric space with bounded geometry and $G(X)$ its coarse groupoid. Then $G(X)$ belongs to the class coarsely generated by its relatively compact open subgroupoids iff $X$ has finite decomposition complexity in the sense of \cite{GTY}.
\end{prop}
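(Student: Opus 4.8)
The plan is to translate, through the Skandalis--Tu--Yu correspondence \cite{STY02}, between metric decompositions of $X$ and decompositions of the base space $\beta X$ of $G(X)$. I would first record two dictionary facts. For a subset $A\subseteq X$ the closure $\overline{A}$ is a clopen subset of $\beta X$, and since $\beta X$ is totally disconnected and $X$ is dense, every clopen subset arises this way; moreover the restriction $G(X)_{|\overline{A}}$ is canonically isomorphic to the coarse groupoid $G(A)$ of $A$ with its subspace metric. If in addition $A$ has diameter at most $D$, then every arrow with both endpoints in $\overline{A}$ has propagation at most $D$, so $G(X)_{|\overline{A}}$ is contained in the compact open set $\overline{\Delta_D}$ (the closure in $\beta(X\times X)$ of the entourage $\Delta_D=\{(x,y):d(x,y)\le D\}$) and hence is a relatively compact open subgroupoid. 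Thus uniformly bounded metric families correspond exactly to relatively compact open subgroupoids, giving the base case of both inductive notions.

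The crux is the compatibility of the two decomposition moves. Any symmetric compact $E\subseteq G(X)$ lies in $\overline{\Delta_R}$ for some $R>0$, so a $2$-decomposition of $G(X)$ relative to $E$ is governed by the scale $R$. Given a metric $R$-decomposition $X=X_0\cup X_1$ with each $X_i=\bigsqcup_j X_{ij}$ an $R$-disjoint union, I would cover $\beta X=\overline{X_0}\cup\overline{X_1}$ by the two associated clopen sets and inspect the groupoid $G^{(E)}_{\overline{X_i}}$ generated by $G(X)_{|\overline{X_i}}\cap E$. Because the pieces $X_{ij}$ are pairwise farther than $R$ apart, no generator of propagation at most $R$ joins distinct pieces, so this generated groupoid splits as a disjoint union $\bigsqcup_j H_{ij}$ over the clopen summands $\overline{X_{ij}}$. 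Hence a metric decomposition of $X$ over the family $\{X_{ij}\}$ yields precisely a groupoid $2$-decomposition of $G(X)$ over the groupoid attached to $\{X_{ij}\}$, and iterating this up the (transfinite) decomposition tree of an FDC space --- whose leaves are the uniformly bounded families identified above with relatively compact subgroupoids --- places $G(X)$ in the coarse family generated by its relatively compact open subgroupoids.

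For the converse I would reverse each step, inducting on the well-founded construction of the coarse family. A $2$-decomposition of $G(X)$ relative to $E\subseteq\overline{\Delta_R}$ gives an open cover $\beta X=U_0\cup U_1$ with generated groupoids of strictly smaller rank. Using zero-dimensionality of $\beta X$ I would refine it to a clopen cover $\overline{A_0}\cup\overline{A_1}=\beta X$ with $A_0\cup A_1=X$, and then split each $A_i$ into its $R$-components $A_i=\bigsqcup_j A_{ij}$; distinct $R$-components are automatically more than $R$ apart, so this is an $R$-disjoint union, and the generated groupoid on $\overline{A_i}$ decomposes accordingly as $\bigsqcup_j H_{ij}$ with $H_{ij}$ supported on $\overline{A_{ij}}$. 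The inductive hypothesis, applied to these lower-rank summands, identifies the metric family $\{A_{ij}\}$ as lying in the corresponding decomposition class, and combining the two colours exhibits a metric $R$-decomposition of $\{X\}$ over that class. As $R$ was arbitrary, $\{X\}$ has finite decomposition complexity in the sense of \cite{GTY}.

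The main obstacle is the bookkeeping at the inductive step on the groupoid side: one must check that the generated subgroupoid $G^{(E)}_{\overline{A_i}}$, together with the passage to its partial orbit, genuinely equals the disjoint union of the piece groupoids $H_{ij}$ and that its own symmetric compact subsets remain controlled by the scale $R$, so that the next decomposition scale can be chosen uniformly across the summands. A secondary point needing care is the alignment of the two inductive schemes, namely the transfinite decomposition tree defining FDC against the minimal family stable under $2$-decomposition; I would handle it by noting that stability under two-piece (Mayer--Vietoris) decompositions already forces stability under finite $(d+1)$-piece covers, so the off-by-one between the general $d$-decomposition of the definition and the two-colour decompositions of \cite{GTY} does not affect the generated class.
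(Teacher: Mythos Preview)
Your approach is essentially the paper's: both reduce the proposition to a one-step correspondence between metric $R$-decompositions of $X$ and groupoid $2$-decompositions of $G(X)$, using that every symmetric compact $E\subseteq G(X)$ lies in some $\overline{\Delta_R}$ and that uniformly bounded families correspond exactly to relatively compact open subgroupoids. The forward direction you sketch is the same as the paper's.

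The only divergence is in the converse step. You first refine the given open cover $U_0\cup U_1$ of $\beta X$ to a clopen cover $\overline{A_0}\cup\overline{A_1}$ via zero-dimensionality and then split each $A_i$ into its metric $R$-components. The paper skips the refinement: it keeps the original open sets $U_i$, forms the equivalence relation on $U_i$ induced by the generated subgroupoid $G_i$, and takes the pieces $X_{ij}$ to be the traces on $X$ of the resulting equivalence classes $U_{ij}$, checking directly that distinct classes are $R$-separated. This is marginally more direct, and it sidesteps the very concern you flag---whether shrinking to $\overline{A_i}\subseteq U_i$ keeps the generated groupoid in the prescribed lower-rank family---since the paper never leaves the original $U_i$. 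The induction from the single-step lemma to the full proposition is left to the reader in the paper as well, so your bookkeeping worries about aligning the two transfinite schemes are reasonable on both sides.
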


Let $\mathcal Y$ be a countable family of metric spaces. For $R>0$, a $R$-decomposition of a metric space $X$ over $\mathcal Y$ is a decomposition \[X = X_0 \cup X_1\]
where each metric subspace $X_i$ is a $R$-disjoint union of subspaces $X_{ij}\in \mathcal Y$. Denote $G(\mathcal Y)$ the family of ample groupoids $\{G(Y)\}_{Y\in \mathcal Y}$. The proposition is a direct corollary of the following lemma.

\begin{lemma}
$X$ admits a $r$-decomposition over $Y$ iff $\{G(X)\}$ $2$-decomposes over $G(\mathcal Y)$.
\end{lemma}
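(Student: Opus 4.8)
The plan is to transport the metric decomposition of $X$ to a decomposition of the unit space $G(X)^{(0)}=\beta X$ through the standard dictionary between the coarse groupoid and the coarse geometry of $X$. First I would record the three facts that make this dictionary work. For $R>0$ write $\Delta_R=\{(x,y)\in X\times X : d(x,y)\le R\}$ and let $\overline{\Delta_R}$ denote its closure inside $G(X)\subseteq\beta(X\times X)$; these are compact open symmetric subsets, and since $G(X)=\bigcup_{R>0}\overline{\Delta_R}$ they are cofinal among the symmetric compact subsets of $G(X)$. Hence it suffices to verify the $2$-decomposition condition on the sets $E=\overline{\Delta_R}$, for which $E^{(0)}=\beta X$, with the parameter $R$ on the two sides of the equivalence corresponding. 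Secondly, since $X$ is discrete, subsets $Z\subseteq X$ correspond bijectively to compact open subsets $\overline{Z}=\beta Z\subseteq\beta X$, and any open covering $\beta X=U_0\cup U_1$ yields a decomposition $X=X_0\cup X_1$ with $X_i:=X\cap U_i$ (and conversely $\overline{X_0}\cup\overline{X_1}=\beta X$). Thirdly, the reduction of $G(X)$ to $\beta Z$ is canonically the coarse groupoid $G(Z)$ of the subspace $Z$ \cite{STY02}, and the open subgroupoid $\langle G(X)|_{\overline Z}\cap\overline{\Delta_R}\rangle$ generated by its scale-$R$ part is the corresponding scale-$R$ truncation of $G(Z)$.

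For the forward implication, suppose $X=X_0\cup X_1$ is an $R$-decomposition with $X_i=\bigsqcup_j X_{ij}$ an $R$-disjoint union of members $X_{ij}\in\mathcal Y$. Put $U_i=\overline{X_i}$, so that $\beta X=U_0\cup U_1$. The key point is that $R$-disjointness removes all scale-$\le R$ pairs between distinct components: no element of $G(X)|_{U_i}\cap\overline{\Delta_R}$ joins $\overline{X_{ij}}$ to $\overline{X_{ij'}}$ for $j\ne j'$. Consequently $G(X)|_{U_i}\cap\overline{\Delta_R}=\bigsqcup_j\big(G(X)|_{\overline{X_{ij}}}\cap\overline{\Delta_R}\big)$ as a disjoint union of compact open pieces, and the groupoid it generates is the disjoint union $\bigsqcup_j\langle G(X)|_{\overline{X_{ij}}}\cap\overline{\Delta_R}\rangle$ of the scale-$R$ truncations of the $G(X_{ij})$. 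Each $X_{ij}$ lies in $\mathcal Y$, so each summand lies in $G(\mathcal Y)$, and the generated groupoid belongs to $G(\mathcal Y)$ in the (disjoint-union) sense required by the decomposition. This exhibits $(\,\overline{X_0},\overline{X_1}\,)$ as a $2$-decomposition of $\{G(X)\}$ over $G(\mathcal Y)$ at the level $E=\overline{\Delta_R}$, hence for all $E\subseteq\overline{\Delta_R}$.

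For the converse, given a $2$-decomposition of $\{G(X)\}$ over $G(\mathcal Y)$, apply it to $E=\overline{\Delta_R}$ to obtain an open covering $\beta X=U_0\cup U_1$ for which each generated groupoid $G_i:=\langle G(X)|_{U_i}\cap\overline{\Delta_R}\rangle$ lies in $G(\mathcal Y)$. Set $X_i:=X\cap U_i$, so $X=X_0\cup X_1$. Using the dictionary above, $G_i$ is identified with the scale-$R$ part of $G(X_i)$, and membership in $G(\mathcal Y)$ forces this groupoid to be a disjoint union of coarse groupoids of spaces in $\mathcal Y$; the clopen decomposition of its unit space into these summands pulls back to a partition $X_i=\bigsqcup_j X_{ij}$ with $X_{ij}\in\mathcal Y$. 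That the summands are genuinely $R$-disjoint is exactly the statement that no arrow of $G_i$ inside $\overline{\Delta_R}$ connects two distinct summands, i.e. $d(X_{ij},X_{ij'})>R$ for $j\ne j'$. Thus $X=X_0\cup X_1$ is an $R$-decomposition over $\mathcal Y$.

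The \emph{main obstacle} is the bookkeeping in the third fact of the first paragraph and its use in the converse: one must pin down precisely that the open subgroupoid generated by $G(X)|_{U}\cap\overline{\Delta_R}$ (together with the partial-orbit enlargement $U_E$ appearing in the Mayer--Vietoris construction) is the coarse groupoid of the corresponding subspace truncated at scale $R$, and that its decomposition into orbit-type clopen summands recovers exactly the scale-$R$ components of that subspace. Equally delicate is fixing the convention under which a disjoint union $\bigsqcup_j G(X_{ij})$ of members of $G(\mathcal Y)$ is regarded as lying in $G(\mathcal Y)$; once this is agreed, so that the coarse groupoid of an $R$-disjoint union of $\mathcal Y$-spaces lands in $G(\mathcal Y)$, the combinatorial equivalence between $R$-disjointness and the absence of scale-$\le R$ arrows between components is immediate.
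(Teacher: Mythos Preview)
Your forward implication matches the paper's almost exactly: set $U_i=\overline{X_i}\subseteq\beta X$ and observe that $R$-disjointness forces $G(X)|_{U_i}\cap\overline{\Delta_R}=\overline{\coprod_j X_{ij}\times X_{ij}}$, so the generated subgroupoid decomposes as required.

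The converse, however, is handled differently. You appeal to the hypothesis $G_i\in G(\mathcal Y)$ to read off a clopen partition of the unit space and then pull it back to $X$. The paper instead ignores the $G(\mathcal Y)$-membership at this step and works intrinsically: it takes the equivalence relation on $U_i$ induced by the generated groupoid $G_i$ (i.e.\ the orbit relation), lets $\{U_{ij}\}_j$ be the equivalence classes, and sets $X_{ij}=U_{ij}\cap X$. The $R$-disjointness of the $X_{ij}$ then follows immediately, since two points in the same $U_i$ at distance $\le R$ are joined by an arrow in $G_i$ and hence lie in the same class. This bypasses precisely the obstacle you flag at the end: one never has to settle a convention about when a disjoint union of groupoids counts as a member of $G(\mathcal Y)$, because the partition is produced combinatorially from $G_i$ itself rather than imported from the target family.

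One further imprecision in your converse: you assert that $G_i$ ``is identified with the scale-$R$ part of $G(X_i)$''. This need not hold as stated, because the open set $U_i$ produced by the $2$-decomposition is arbitrary and need not coincide with $\overline{X_i}=\overline{X\cap U_i}$. The paper's orbit-class argument avoids this identification entirely.
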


\begin{proof}
Let $X= X_0\cup X_1$ a $R$-decomposition over a metric family $\mathcal Y$. Let $U_i = \overline X_i$ be the closure in the Stone-\v{C}ech compactification $\beta X$. The source and range maps $r,s : G(X) \rightrightarrows \beta X$ are continuous and open so that 
\[G(X)_{U_i} = r^{-1}(U_i)\cap s^{-1}(U_i) = \overline {r^{-1}(X_i)\cap s^{-1}(X_i)} = \overline {X_i \times X_i}.\]
This entails $G(X)_{U_i} \cap \overline {\Delta_R} = \overline {(X_i \times X_i )\cap \Delta_R } = \overline{\coprod_j X_{ij}\times X_{ij}}$, which is included in $G(X_i)$. Any compact subset of $G(X)$ being contained in some $\Delta_R$, this concludes one way.\\

Given a compact symmetric set $E$ in $G(X)$, let \[G^{(0)}= U_0 \cup U_1\]
be a decomposition of the base space into open subsets, and denote $G_i$ the open subgroupoids of $G$ generated by $G_{|U_i}\cap E$. Fix $i$ and define on $U_i$ the equivalence relation $\sim$ induced by $G_i$: 
\[x\sim y\quad \text{if }\exists g \in G_i \text{ s.t. }s(g) = x \text{ and } r(g)= y.  \]
Let $\{U_{ij}\}_{j \in J_i}$ the equivalence classes, and $X_{ij} = U_{ij}\cap X$. Then:
\begin{itemize}
\item the $X_{ij}$'s cover $X$,
\item for $i$ fixed, $\{X_{ij}\}$ is $E$-separated. Indeed, suppose $j\neq k$ and there exists $(x,y)\in X_{ij}\times X_{ik}\cap \Delta_R$, then $x\sim y$ which is contradicts $j\neq k$. By definition of $G(X)$, there is a number $R>0$ such that $\overline{\Delta_R} \cap E$ is contained in $E$, and then $\{X_{ij}\}$ is $R$-disjoint. 
\end{itemize}  
\end{proof}

This allows to give an example of a coarse space $X$ that does not embed into a Hilbert space but such that $C^*_u (X)$ satisfies the Künneth formula. Indeed, if a group $\Gamma$ is an extension of $H$ by $K$, then the coarse space $|\Gamma |$ finitely decomposes over $\{|H|,|K|\}$. In \cite{ArzhantsevaTesssera} the authors provide an example of a finitely generated group $\Gamma$ which is not coarsely embeddable into a Hilbert space but is a split extension of groups that do. \\

More precisely, the group is build as $\Gamma= \Z / 2\Z  \wr_G (H\times F)$ where:
\begin{itemize}
\item $G$ is the special Gromov monster group, a finitely generated group which contains an expander graph isometrically in its Cayley graph (whence does not admit a coarse embedding into a Hilbert space), a finitely generated group with Haagerup's property but is not coarsely amenable,
\item $H$ is the special Haagerup monster, 
\item $F$ is a finitely generated free group,
\item $G$ is a $H\times F$-set via a surjective morphism $H\times F \twoheadrightarrow G$.
\end{itemize} 
The restricted wreath product is defined as the semi-product of the finitely suppored functions $G \rightarrow \Z / 2\Z$ by the action of $H\times F$ by translation, so that $\Z / 2\Z  \wr_G (H\times F)$ is a split extension of $\Z / 2\Z  \wr_G F$ by $H$, both of which are coarsely embeddable into a Hilbert space.\\

The previous lemma ensures that $G(\Gamma)$ $2$-decomposes over a family of a-T-menable groupoids, whose $\mathrm{C}^*$-algebras as well as their intersection satisfy the Künneth formula. By Proposition \ref{stability}, $C^*_u(|\Gamma|)$ satisfies the Künneth formula. This can be summarize in the following.

\begin{prop}
There exists a metric space that does not coarsely embed into a Hilbert space such that its uniform Roe algebra satisfies the Künneth formula.
\end{prop}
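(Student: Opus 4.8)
The plan is to assemble the desired example from the Arzhantseva--Tessera construction \cite{ArzhantsevaTesssera} together with the stability result Proposition \ref{stability}. Put $\Gamma=\Z/2\Z\wr_G(H\times F)$, where $G$ is the special Gromov monster, $H$ the special Haagerup monster, and $F$ a finitely generated free group, with $G$ viewed as an $H\times F$-set via a fixed surjection $H\times F\twoheadrightarrow G$, and let $X:=|\Gamma|$. First I would record the two facts that make $X$ a candidate. On the one hand, $X$ does not coarsely embed into a Hilbert space, since $\Gamma$ contains an expander isometrically in its Cayley graph. On the other hand, $\Gamma$ is a \emph{split} extension with constituents $H$ and $K:=\Z/2\Z\wr_G F$, and both $|H|$ and $|K|$ do coarsely embed into a Hilbert space.

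Next I would pass to a decomposition at the coarse level. By the extension principle recalled above, $X=|\Gamma|$ finitely decomposes over the family $\mathcal Y=\{|H|,|K|\}$: for every $R>0$ one may split $X$ into two pieces, each an $R$-disjoint union of subspaces coarsely equivalent to members of $\mathcal Y$. Applying the preceding Lemma then shows that $\{G(X)\}$ $2$-decomposes over $G(\mathcal Y)=\{G(|H|),G(|K|)\}$. Concretely, for every symmetric compact set $E\subseteq G(X)$ there is an open cover $G(X)^{(0)}=U^0\cup U^1$ such that the two subgroupoids $G(X)_{U^0_E}^{(E)}$ and $G(X)_{U^1_E}^{(E)}$ are coarse groupoids (of $R$-disjoint unions) of spaces that coarsely embed into a Hilbert space.

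To invoke Proposition \ref{stability} I would then check that the three $\mathrm C^*$-algebras $C^*_r(G(X)_{U^0_E}^{(E)})$, $C^*_r(G(X)_{U^1_E}^{(E)})$, and their intersection all satisfy the quantitative Künneth formula. Since $|H|$ and $|K|$ embed coarsely, their coarse groupoids are a-T-menable by \cite[Theorem~5.4]{STY02}, and a-T-menability is inherited by disjoint unions and by the reductions and intersections arising in the decomposition. For an a-T-menable ample groupoid the quantitative Künneth formula holds by the quantitative refinement of Corollary \ref{Corollary:Kunneth} (Theorem~5.2.13 of \cite{DellAieraThesis}): such a groupoid satisfies Baum--Connes with all coefficients by \cite{Tu98}, and the crossed products by its compact open subgroupoids are type I, hence in the bootstrap class $\mathcal B\subseteq\mathcal N$. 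Feeding these three controlled inputs into Proposition \ref{stability} yields that $C^*_r(G(X))=C_u^*(X)$ satisfies the quantitative, and a fortiori the ordinary, Künneth formula.

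The main obstacle I anticipate is the upgrade from the \emph{ordinary} to the \emph{quantitative} Künneth formula for the pieces, and in particular for the intersection $A^{(0)}\cap A^{(1)}$: Proposition \ref{stability} lives entirely in controlled $\K$-theory, so it is not enough to know that the a-T-menable groupoid algebras lie in $\mathcal N$; one must realize the relevant maps $\alpha$ as controlled isomorphisms $\hat\alpha$ with uniform propagation control. Reconciling this with the failure of second countability of the ambient coarse groupoids (the very point that forced the inductive-limit detour in Theorem \ref{Theorem:Uniform Roe-algebra}) is the delicate part; I would handle it through the $G'\ltimes\beta X$ presentation, reducing everything to second countable a-T-menable groupoids for which the quantitative statement is genuinely available.
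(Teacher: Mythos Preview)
Your proposal is correct and follows essentially the same route as the paper: take the Arzhantseva--Tessera group $\Gamma$, use that $|\Gamma|$ finitely decomposes over $\{|H|,|K|\}$ (both coarsely embeddable), translate this via the Lemma into a $2$-decomposition of $G(|\Gamma|)$ over a-T-menable groupoids, and then invoke Proposition~\ref{stability}. The paper's argument is in fact terser than yours and does not spell out the quantitative upgrade or the second-countability workaround you flag at the end; your concern about needing the \emph{quantitative} K\"unneth formula for the pieces and their intersection is legitimate, and the paper simply asserts this, implicitly appealing to Theorem~5.2.13 of \cite{DellAieraThesis} just as you do.
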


	\ \newline
	{\bf Acknowledgments}. The content of this paper covers some results from the authors' doctoral thesis. The first one would like to thank his supervisor Siegfried Echterhoff for his constant support and valuable advice. The second author expresses his gratitude to his advisor Hervé Oyono-Oyono for his patience and guidance, and to Rufus Willett, who was kind enough to read and comment on the first drafts of this work. 
	
\bibliographystyle{amsalpha}
\bibliography{Literatur,biblio2}

\end{document}